\documentclass[11pt]{article}
\usepackage[ruled,vlined,linesnumbered]{algorithm2e}
\usepackage{amsmath,amsfonts,amssymb,amsthm,latexsym}
\usepackage{epsfig}
\usepackage{color}
\usepackage{enumerate}
\usepackage{hyperref}
\usepackage{enumerate}
\usepackage{graphicx}
\usepackage[dvipsnames]{xcolor}
\usepackage[cp1250]{inputenc}
\usepackage{tikz}
\usepackage{tkz-graph}
\usepackage{tkz-berge}
\usepackage{subcaption}
\usepackage{float}
\usepackage{xcolor}
\usepackage{listings}
\usepackage{booktabs}

\newcommand{\smallqed}{{\tiny ($\Box$)}}

\newcommand{\dchiS}{{\rm dom}_{\chi_S}}
\newcommand{\dchir}{{\rm dom}_{\chi_\rho}}

\newcommand{\dom}{{\rm dom}}

\hypersetup{colorlinks=true, linkcolor=blue, citecolor=blue,
urlcolor=blue}
\usetikzlibrary{calc}

\hypersetup{colorlinks=true}

\hypersetup{colorlinks=true, linkcolor=blue, citecolor=blue,urlcolor=blue}

\hypersetup{colorlinks=true}

\hypersetup{colorlinks=true, linkcolor=blue, citecolor=blue,urlcolor=blue}

\hypersetup{colorlinks=true, linkcolor=blue, citecolor=blue,
	urlcolor=blue}
\usetikzlibrary{calc}

\hypersetup{colorlinks=true}

\hypersetup{colorlinks=true, linkcolor=blue, citecolor=blue,urlcolor=blue}

\hypersetup{colorlinks=true}

\hypersetup{colorlinks=true, linkcolor=blue, citecolor=blue,urlcolor=blue}

\title{Partitioning an $S$-packing coloring into broadcast dominating sets}
\date{}
\author{{Bo\v{s}tjan Bre\v{s}ar$^{a,b}$, Jasmina Ferme$^{a,c}$, and Wenjie Hu$^{a,d}$}\thanks{Corresponding author}\vspace{2mm}\\
$^{a}$Faculty of Natural Sciences and Mathematics, University of Maribor, Slovenia\vspace{1mm}\\
$^{b}$Institute of Mathematics, Physics and Mechanics, Ljubljana, Slovenia\vspace{1mm}\\
$^{c}$Faculty of Education, University of Maribor, Slovenia\vspace{1mm}\\
$^{d}$Hubei Key Laboratory of Applied Mathematics, Faculty of Mathematics and Statistics\\
Hubei University, Wuhan 430062, China\vspace{1.5mm}\\
\texttt{bostjan.bresar@um.si}\\
\texttt{jasmina.ferme1@um.si}\\
\texttt{huwj@stu.hubu.edu.cn}}
\date{}

\newtheorem{theorem}{Theorem}[section]
\newtheorem{corollary}[theorem]{Corollary}
\newtheorem{lemma}[theorem]{Lemma}
\newtheorem{observation}[theorem]{Observation}
\newtheorem{proposition}[theorem]{Proposition}

\newtheorem{claim}{Claim}
\newtheorem{problem}{Problem}

\theoremstyle{definition}
\newtheorem{definition}[theorem]{Definition}
\theoremstyle{remark}

\begin{document}

\maketitle

\begin{abstract}
Given a graph $G$ and a positive integer $k$, a packing $k$-domatic coloring of $G$ is a function $f: V(G) \to \{1,\ldots,t\}$ such that (1) for any color $j \in \{1,\ldots, t\}$ and any two distinct vertices $u,v\in V(G)$ with $f(u)=f(v)=j$ we have $d_G(u,v)>j$, and (2) $V(G)$ admits a partition into $k$ sets $A_1,\ldots,A_k$ such that for any $v\in V(G)$ and any $i\in\{1,\ldots,k\}$ there exists a vertex $w\in A_i$ such that $d_G(v,w)\le f(w)$. The minimum integer $t$ such that $G$ admits a packing $k$-domatic coloring of $G$ using colors in $\{1,\ldots,t\}$ is denoted by $\chi_{\rho,k}(G)$. The condition (1) in the definition of a packing $k$-domatic coloring implies that $G$ is a packing coloring, hence $\chi_{\rho,k}(G)\ge \chi_\rho(G)$ holds for any graph $G$, where $\chi_\rho(G)$ is the packing chromatic number of $G$. On the other hand, the new concept also leads to a generalization of the domatic number of a graph due to which one can easily see that $\chi_{\rho,2}(G)=\chi_\rho(G)$ holds for any graph with no isolated vertices. One of the main result in this paper is that $\chi_{\rho,3}(G)=\chi_\rho(G)$ holds in any connected graph $G$ with minimum degree at least $2$, and the bound is best possible in two different senses. In addition, we provide several exact values and bounds on the new invariant in paths and cycles. We prove that $\chi_{\rho,k}(P_n)=k$ for any $k\in\{3,4,5\}$ as soon as $n\ge 8$, and, in contrast, $\chi_{\rho,k}(P_n)
>k$ for any $n\ge k\ge 12$. We also prove the exact values of $\chi_{\rho,k}(P_\infty)$ when $k\in \{3,4,5\}$ for the two-way infinite path $P_\infty$, and exact values of $\chi_{\rho,k}(C_n)$ when $k\in\{3,4\}$ for all cycles $C_n$. We also consider a general framework of $S$-packing $k$-domatic colorings, where $S$ is an arbitrary sequence of non-negative integers, and present some basic results in this context. 

\end{abstract}

\noindent \textbf{2020 Mathematics Subject Classification:} 05C12, 05C15 05C69\\
\textbf{Keywords}: broadcast domination, packing coloring, packing chromatic number, domatic number


\section{Introduction} 

We introduce a novel graph-theoretic framework that simultaneously merges distance-based partition constraints ($S$-packing colorings~\cite{god-2012,god-2014}, initially introduced under the name broadcast colorings~\cite{god2008}) and multi-layered global coverage criteria (broadcast domination~\cite{dun2006}). One can easily observe that an arbitrary S-packing coloring can be partitioned into (at least) two dominating broadcasts by using the well known result that every vertex set of an isolate-free graph can be partitioned into two dominating set~\cite{cock1977,zel-1981}. Thus, the natural question appears into how many dominating broadcasts can an $S$-packing coloring be partitioned.  Another question, which is somehow dual to the first one, is to determine the minimum number of colors in an $S$-packing coloring for which a partition into the given number of dominating broadcasts is possible. These questions make rise to two graph invariants, which are the central theme of this work. A similar approach in standard domination theory has led to the concepts of domatic partition and domatic number, which are in a sense generalized by the framework proposed in this paper. 


\subsection*{Broadcast domination and broadcast domatic partitions}

Let $G = (V(G), E(G))$ be a simple, connected, undirected graph.  Denote by $n(G)$ the order of $G$. Let $d_G(u, v)$ denote the shortest-path distance between vertices $u$ and $v$ in $G$, and let ${\rm diam}(G)$ be the {\em diameter} of $G$, which is defined as $\max\{d_G(u,v):\; u,v\in V(G)\}$. In this paper, we use the notation $[n]=\{1,\ldots,n\}$ and $[n]_0=\{0\}\cup [n]$. 
For a vertex $v\in V(G)$, the {\em neighborhood}, $N_G(v)$, of $v$ is the set of vertices adjacent to $v$, while the {\em closed neighborhood} of $v$ is defined as $N_G[v]=N_G(v)\cup\{v\}$. If $S\subset V(G)$, then let $N_G(S)=\bigcup_{v\in S}{N_G(v)}$. If the graph is understood from the context, then index $G$ in the above definitions may be omitted. Further, if $u$ is a a vertex of a graph $G$, and $k$ is a positive integer, then $N_k(u)=\{x\in V(G):\, d(x,u)\le k\}$.  
The {\em degree}, $\deg(v)$, of $v\in V(G)$ is defined as $|N_G(v)|$, while $\delta(G)=\min\{\deg(v):\,v\in V(G)\}$. 

A subset of vertices $D \subseteq V$ is a \textit{dominating set} of $G$ if every vertex $v \in V(G) \setminus D$ is adjacent to at least one vertex in $D$. In other words, for the closed neighborhood of every vertex $v$ in $G$, $N_G[v] \cap D \neq \emptyset$. The minimum cardinality of a dominating set in $G$ is the \textit{domination number}, $\gamma(G)$, of $G$.

A \textit{broadcast} on a graph $G$ is a function $f: V \to \mathbb{N}_0$
that assigns a non-negative integer to each vertex. 
A broadcast $f$ is a \textit{dominating broadcast} on $G$ if for every vertex $u \in V(G)$, there exists a vertex $v \in V(G)$ (possibly $u = v$) such that $f(v) > 0$ and 
\begin{equation}
\label{eq:dombroad}
d_G(u, v) \le f(v).    
\end{equation}
In other words, every vertex in the graph must be within the distance $f(v)$ of at least one broadcasting vertex $v$ with $f(v)>0$. The concept of broadcast domination was first introduced by Erwin in his Ph.D. dissertation~\cite{er2001}. The foundational properties and formal framework were subsequently published by Dunbar, Haynes, Hedetniemi, Hedetniemi, and McRae in~\cite{dun2006}. The authors studied the question of a minimum cost of a dominating broadcast, called the broadcast domination number of graph. In this paper, we are interested in a partition of the vertex set into dominating broadcasts, which is similar to the concept of domatic partition (see Section~\ref{sec:prel} for more detail on this connection).

Given a broadcast $f: V \to \mathbb{N}_0$,  
a \textit{broadcast domatic $k$-partition} of a graph $G$ {\em with respect to $f$} is a partition  $\pi = \{V_1, \dots, V_k\}$  of the vertex set $V(G)$ such that for each subset $V_i$, where $1 \le i \le k$, the function $f|_{V_i}$ ($f$ restricted to $V_i$) is a dominating broadcast of $G$. In other words, letting $f_i:V(G)\to \mathbb{N}_0$ be defined as 
$$f_i(v) =  \left \{
    \begin{array}{ll}
	  \!\!\!f(v) &\text{if~} v\in V_i,\\
	0 \hskip0.2cm &\text{otherwise},\\
    \end{array}
    \right.$$ 
$f_i$ is a dominating broadcast for every $i\in [k]$. Given $f:V\to \mathbb{N}_0$ if there exists a broadcast domatic $k$-partition of $G$ with respect to $f$, then $f$ is a {\em $k$-dominating broadcast of $G$}.

\subsection*{$S$-packing (domatic) coloring}

The concepts of broadcast coloring and broadcast chromatic number were introduced in 2008 by Goddard et al.~\cite{god2008}. In all subsequent papers, the term ``broadcast'' in the name was replaced by ``packing'', hence broadcast coloring was called packing coloring, which is the name we will use also in this paper. Packing coloring is a vibrant topic of research; see a survey~\cite{BFKR} from 2020, and some more recent references~\cite{Kostochka-2021,liu-2025,mt-2024, tt-2024, YW}. 

Let $S=(s_1,s_2,\ldots)$ be a non-decreasing sequence of positive integers. A function $f: V(G) \rightarrow \mathbb{N}$ is an {\em $S$-packing coloring} of a graph $G$ if for any distinct vertices $u,v\in V(G)$ with $f(u)=f(v)=i$, we have $d_G(u,v)>s_i$. In other words, any two vertices assigned the exact same color $i$ must be at a distance strictly greater than $s_i$ from each other. Equivalently, each color class $P_i = \{v \in V :\, f(v) = i\}$ forms an {\em $i$-packing} in $G$.

The smallest integer $t$ for which $G$ admits an $S$-packing coloring whose largest color is $t$ is the {\em $S$-packing chromatic number} of $G$, denoted $\chi_S(G)$. If $\chi_S(G)\le \ell$ we say that $G$ is {\em $(s_1,\ldots,s_\ell)$-packing colorable}. Hence, $S$-packing coloring has a meaning also when $S$ is a finite sequence, in which case we are not dealing with optimization problem (finding the minimum number of colors), but with the decision problem whether $G$ admits an $S$-packing coloring or not. 
In the special case when $S=(n)_{n\in \mathbb{N}}$ is the sequence of positive integer, an $S$-packing coloring is simply called packing coloring and the resulting invariant is the {\em packing chromatic number} of $G$, denoted by $\chi_\rho(G)$.

Now, we are ready to present definitions of the main concepts that we introduce in this paper. They are based on combining the concepts of broadcast domatic partitions and $S$-packing colorings. 
Let $G$ be a graph, $S$ a non-decreasing sequence of positive integers  and $k$ a positive integer. An \textit{$S$-packing $k$-domatic coloring} of $G$ is an $S$-packing coloring $f: V(G) \to \mathbb{N}$, which is at the same time a $k$-dominating broadcast. That is, there exists a partition of $V(G)$ into $k$ subsets $A_1, \ldots, A_k$ such that each $f|_{A_i}$ is a dominating broadcast of $G$. In particular, for every vertex $x \in V(G)$ and every set $A_i$ ($i\in [k]$) there exists a vertex $v \in A_i$ such that $d(x, v) \le f(v)$. In such a case, we also say that a vertex $x$ is $f$-dominated by the sets $A_1,\ldots,A_k$. 

\subsection*{New graph invariants}

Two optimization concepts naturally arise from the above definition of $S$-packing $k$-domatic coloring. 

\begin{definition}
\label{def:2}
    Let $G$ be a graph, $S=(s_1,s_2,\ldots)$ a non-decreasing sequence of positive integers and $k$ a positive integer, where $1\le k\le n(G)$. The minimum $t$ for which there exists an $S$-packing $k$-domatic coloring of $G$ using colors in $[t]$ is an \textit{$S$-packing $k$-domatic chromatic number} of $G$, denoted by $\chi_{S,k}(G)$. If there is no such $t\in \mathbb{N}$, we set $\chi_{S,k}(G)=\infty$.
\end{definition}

The concept in Definition~\ref{def:2} is well defined. Indeed, if $G$ is an arbitrary graph and $S$ an arbitrary non-decreasing sequence of positive integers and $k\ge 1$, then either there is some $t\in \mathbb{N}$ for which there exists an $S$-packing $k$-domatic coloring of $G$ using colors in $[t]$ or not. In the first case, there also exists the smallest $t$ satisfying this property, which then yields the value of $\chi_{S,k}(G)$. In the second case, $\chi_{S,k}(G)=\infty$ by definition.  

In the special case when $S=(n)_{n\in\mathbb{N}}$, we omit $S$ in the name of the invariant and speak of \textit{packing $k$-domatic chromatic number} of $G$ and denote it by $\chi_{\rho,k}(G)$. 

\begin{definition}
\label{def:3}
Let $G$ be a graph and $S=(s_1,\ldots,s_t)$ a finite non-decreasing sequence of positive integers such that $G$ is $S$-packing colorable. The maximum $k$ for which there exists an $S$-packing $k$-domatic coloring of $G$ is the {\em $S$-packing domatic number} of $G$, denoted $\dom_{\chi_S}(G)$. 
    
If $S=(s_1,s_2,\ldots)$ is an infinite non-decreasing sequence of positive integers, then let $\dom_{\chi_S}(G)=dom_{\chi_{S'}}(G)$, where $S'=(s_1,\ldots,s_{\chi_S(G)})$. That is, we consider the first $\chi_S(G)$ terms in the sequence $S$ using the definition for a finite sequence.  
\end{definition}

To see that the concept in Definition~\ref{def:3} is well defined, let $G$ be a graph and $S=(s_1,\ldots,s_t)$ be a finite non-decreasing sequence $S$ of positive integers such that $G$ is $S$-packing colorable. Taking $k=1$, note that any $S$-packing coloring $f$ of $G$ (there exists at least one such coloring by the assumption) is an $S$-packing $1$-domatic coloring of $G$. Indeed, it is obvious that $f$ is a $1$-dominating broadcast of $G$ since every vertex $v\in V(G)$ $f$-dominates itself; notably, $d_G(v,v)=0<1\le f(v)$. Hence, in any graph $G$ which is $S$-packing colorable, $\dom_{\chi_S}(G)\ge 1$. 

In the special case when $S=(n)_{n\in\mathbb{N}}$, the invariant in Definition~\ref{def:3}, is denoted by $\dom_{\chi_\rho}(G)$, and we call it the packing domatic number of $G$.

\subsection*{Conceptual remarks and potential applications}

The primary goal of this work is to investigate the combinatorial properties of the newly introduced invariants and their relationship to existing concepts from domination and packing coloring theory. On the other hand, the concepts may provide several applications, which we present mainly as sources of intuition and motivation.  

The main interest in $S$-packing $k$-domatic colorings stems from the interaction between two competing requirements. On the one hand, vertices assigned the same value must be sufficiently separated, as prescribed by the $S$-packing condition. On the other hand, the same values determine the broadcast ranges that are used to achieve domination. Larger values therefore provide greater coverage but are more difficult to place due to the packing restrictions. This interplay between separation and coverage appears to be a natural extension of both packing colorings and broadcast domination.

Possible interpretations include the following. \textbf{Communication networks.}
    Vertices may represent transmitters with different transmission ranges. The packing condition prevents transmitters of the same type from being placed too close to one another, while each layer of the partition provides complete coverage of the network. \textbf{Service and facility networks.} Vertices may represent service centers or facilities with different service radii. The model captures the requirement that facilities of the same category are sufficiently dispersed, while each layer remains capable of serving the entire network. \textbf{Distributed systems.} Vertices may represent agents, processors, or data centers with different communication capabilities. The partition into several dominating layers can be interpreted as providing multiple independent structures capable of reaching all parts of the network.

\subsection*{Organization of the paper and main results}

In Section~\ref{sec:prel}, we establish several preliminary properties of the two newly introduced invariants. In particular, we prove monotonicity of $\chi_{S,k}(G)$ with respect to $k$ and describe a duality between the $S$-packing $k$-domatic chromatic number and the $S$-packing domatic number: $$\chi_{S,k}(G)=\chi_S(G) \quad\text{if and only if}\quad \dom_{\chi_S}(G)\geq k.$$ We also explain how the classical domatic number arises as a special case of the new framework. For graphs of diameter~$2$ and sequences satisfying $s_1=1$ and $s_2\geq 2$, we determine the exact value $\operatorname{dom}_{\chi_S}(G) =n(G)-\alpha(G)+1 =\beta(G)+1$. Besides illustrating the connection between packing colorings, domination, and classical covering parameters, this formula yields NP-completeness of the decision version of $\operatorname{dom}_{\chi_S}$ even for graphs of diameter~$2$. We further show that the decision version of $\chi_{\rho,2}$ remains NP-complete when restricted to trees. These results indicate the difficulty of the newly introduced invariants, and explain our decision to study it in some basic families of graphs. In particular, paths and cycles are one of the rare graph classes in which the packing chromatic number is easy to obtain, hence our decision to study their packing $k$-domatic chromatic numbers, which is done in Section~\ref{sec:paths}.

For finite paths, we first establish the general lower bound $\chi_{\rho,k}(P_n)\geq k$ for all $k\geq 3$ and $n\geq 3$. We then determine the exact values for $k\in\{3,4,5\}$. In particular, $\chi_{\rho,3}(P_n)=3$ for every $n\geq 3$, $\chi_{\rho,4}(P_n)=4$ for every $n\geq 5$, with $\chi_{\rho,4}(P_4)=5$, and $\chi_{\rho,5}(P_n)=5$ for every $n\geq 8$, while $\chi_{\rho,5}(P_n)=6$ for $n\in\{5,6,7\}$. Thus, for each $k\in\{3,4,5\}$, the lower bound $\chi_{\rho,k}(P_n)\geq k$ is attained for all sufficiently large $n$. In contrast, we prove that $\chi_{\rho,k}(P_n)>k$ whenever $k\geq 12$ and $n\geq k$. This reveals a qualitative change in the behavior of the invariant as the required number of dominating broadcasts increases. For the two-way infinite path $P_\infty$, we determine $$\chi_{\rho,3}(P_\infty) =\chi_{\rho,4}(P_\infty)=3 \quad\text{and}\quad \chi_{\rho,5}(P_\infty)=4, $$ and show that $ \chi_{\rho,6}(P_\infty)\in\{4,5\}$. More generally, we prove the lower bound $ \chi_{\rho,k}(P_\infty)>\frac{k-1}{2}$ for every $k\geq 3$. These results demonstrate that the finite and infinite versions of the path problem exhibit substantially different behavior. We also determine $\chi_{\rho,k}(C_n)$ for $k\in\{3,4\}$ and all cycles. For $k=3$, we prove that $$\chi_{\rho,3}(C_n)= \begin{cases} 3 &\text{if }  n\equiv 0\pmod 4,\\ 4 & \text{if } n\not\equiv 0\pmod 4, \end{cases}$$ for $n\geq 4$, together with $\chi_{\rho,3}(C_3)=3$. For $k=4$, the value is $5$ for $C_4$, it is $4$ whenever $n\not\equiv0\pmod4$, and it is $3$ whenever $n\equiv0\pmod4$, except for the two exceptional cycles $C_8$ and $C_{20}$, for which the value is $4$. The lower bound for $C_{20}$ is verified by an exhaustive computational search, whose code is included in Appendix~A. 

In Section~\ref{sec:domatic}, we prove one of the main structural results of the paper: $$\dom_{\chi_\rho}(G)\geq 3$$ for every connected graph $G$ with minimum degree at least~$2$. Equivalently, $\chi_{\rho,3}(G)=\chi_\rho(G)$ for every such graph. This strengthens the classical fact that every graph without isolated vertices admits a partition into two dominating sets and shows that the additional flexibility provided by broadcast ranges guarantees a third dominating layer under the natural condition $\delta(G)\geq2$. We show that the bound is sharp, both by exhibiting graphs with minimum degree~$2$ and packing domatic number~$3$, and by giving examples with minimum degree~$1$ and packing domatic number~$2$. As a complexity consequence, we prove that the decision version of $\chi_{\rho,3}$ is NP-complete even for graphs of diameter~$2$ and minimum degree at least~$2$. 
Finally, Section~\ref{sec:conclude} collects several open problems arising from our results.  We also indicate that the broader study of $S$-packing domatic colorings for sequences other than $(1,2,3,\ldots)$ remains largely open.

\section{Preliminary observations}
\label{sec:prel}

First, we notice the monotonicity property with respect to $k$ for the parameter $\chi_{S,k}$.  It immediately follows from the fact that if $G$ admits an $S$-packing $k$-domatic partition for some $k\ge 2$, then by merging any two sets from the partition we get an $S$-packing $(k-1)$-domatic partition (using the same $S$-packing coloring $f$).
\begin{observation}
\label{obs:monotonicity}
    If $S$ is a sequence of positive integers and $G$ is a graph with $\dchiS(G)\ge k\ge 2$, then 
$$\chi_{S,k-1}(G)\le \chi_{S,k}(G).$$ 
\end{observation}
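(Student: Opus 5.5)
The plan is to take an optimal witness for $\chi_{S,k}(G)$ and coarsen its partition. First, since $\dchiS(G)\ge k$, Definition~\ref{def:3} (read through the finite truncation of $S$ if $S$ is infinite) gives an $S$-packing $k$-domatic coloring of $G$. Hence $\chi_{S,k}(G)$ is finite. Let $t=\chi_{S,k}(G)$, and fix an $S$-packing coloring $f\colon V(G)\to[t]$ together with a partition $A_1,\ldots,A_k$ of $V(G)$ such that each $f|_{A_i}$ is a dominating broadcast of $G$.

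Next, since $k\ge 2$, I would form the partition $A_1\cup A_2, A_3,\ldots,A_k$. This has $k-1$ parts and is still a partition of $V(G)$. Keep the same coloring $f$; the packing condition depends only on $f$, so it holds unchanged.

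It remains to check that $f|_{A_1\cup A_2}$ is a dominating broadcast. Let $x\in V(G)$. Because $f|_{A_1}$ is dominating, there is $v\in A_1\subseteq A_1\cup A_2$ with $f(v)>0$ and $d_G(x,v)\le f(v)$. So $x$ is $f$-dominated by $A_1\cup A_2$. The parts $A_3,\ldots,A_k$ are dominating as before.

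Thus $f$ is an $S$-packing $(k-1)$-domatic coloring using colors in $[t]$, which gives $\chi_{S,k-1}(G)\le t=\chi_{S,k}(G)$. The only delicate point is the first step: the hypothesis $\dchiS(G)\ge k$ is exactly what guarantees that $\chi_{S,k}(G)$ is finite, so that an optimal witness exists. If $\chi_{S,k}(G)=\infty$, the inequality would hold trivially anyway. No other obstacle is expected.
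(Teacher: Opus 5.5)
Your proposal is correct and is the paper's argument: keep the same $S$-packing coloring $f$ and merge two parts of the domatic partition, which stays a dominating broadcast because it contains a part that already dominates. Your remark that $\dchiS(G)\ge k$ makes $\chi_{S,k}(G)$ finite, and that the case $\chi_{S,k}(G)=\infty$ is trivial anyway, only spells out what the paper leaves implicit.
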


It is easy to see that for some sequences $S$ and some positive integers $k$, there are graphs $G$ for which $\chi_{S,k}(G)=\infty$. For instance, if $S=(1,1,\ldots)$ is the constant sequence of $1$s, then the existence of an $S$-packing $k$-domatic colorings depends only on the question whether $G$ can be partitioned into $k$ dominating sets. 
However, if the sequence $S$ is unbounded, the following observation holds.

\begin{observation}
If $S$ is an unbounded non-decreasing sequence of positive integers, then $\chi_{S,k}(G)<\infty$ holds for any connected graph $G$ and any $k\le n(G)$. \end{observation}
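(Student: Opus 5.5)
Since $S$ is unbounded and non-decreasing, we choose distinct colors with large enough $s$-values to give a single broadcast that dominates the whole graph, and then use the partition into $k$ classes freely. Let $G$ be connected with $n=n(G)$ vertices and let $d={\rm diam}(G)$ (finite by connectivity). Because $S$ is unbounded and non-decreasing, there is an index $m$ with $s_i\ge d$ for every $i\ge m$.

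Define $f$ by listing the vertices as $v_1,\ldots,v_n$ and setting $f(v_j)=m+j-1$. Each color is used at most once, so the packing condition holds vacuously and $f$ is an $S$-packing coloring. The only subtlety is that the broadcast range of a vertex is $f(v)$ itself, not $s_{f(v)}$, because the domination condition reads $d_G(u,v)\le f(v)$. So we also need $f(v)\ge d$. We therefore start the colors at $m'=\max\{m,d\}$ instead of $m$, which makes every $f(v_j)\ge d$. Consequently every single vertex $v$ with $f(v)>0$ dominates all of $G$, since $d_G(u,v)\le d\le f(v)$ for all $u$.

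Now take any partition of $V(G)$ into $k$ nonempty sets $A_1,\ldots,A_k$, which exists since $k\le n$; for instance, use singletons for the first $k-1$ vertices and put the rest in $A_k$. Each $f|_{A_i}$ is a dominating broadcast because any vertex of $A_i$ already covers the whole graph. Hence $f$ is an $S$-packing $k$-domatic coloring with colors in $[m'+n-1]$, so $\chi_{S,k}(G)\le m'+n-1<\infty$.

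There is no real obstacle. The only point to check is keeping distinct the roles of $s_i$, which governs packing, and of $f(v)$, which governs range. Using distinct colors makes the packing requirement trivial, so unboundedness of $S$ is not actually needed for this argument; the color values only have to be at least the diameter.
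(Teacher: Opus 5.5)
Your proof is correct and uses the same idea as the paper: color the vertices injectively with colors large enough that each vertex alone broadcasts to all of $G$. The paper does this with singletons for $k=n$ and then invokes Observation~\ref{obs:monotonicity}, while you partition into $k$ classes directly.

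Starting at $\max\{m,d\}$, rather than at the first index $i$ with $s_i\ge{\rm diam}(G)$ as the paper does, is a worthwhile refinement. It makes the construction valid under the literal definition, where the range is $f(v)$ and, as you note, unboundedness is not needed. It also remains valid under the reading the paper's own argument implicitly uses, where the range is $s_{f(v)}$ and unboundedness is exactly what guarantees $s_{f(v)}\ge{\rm diam}(G)$.
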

\begin{proof}
Let $S$ be an unbounded non-decreasing sequence of positive integers, and let $i\in \mathbb{N}$ be the smallest index such that $s_i\ge{\rm diam}(G)$. Note that for any $v\in V(G)$, letting $f(v)=s_i$ and $f(u)=0$ for all $u\ne v$ gives a dominating broadcast of $G$. Thus, assigning pairwise distinct values from $\{s_i,s_{i+1},\ldots,s_{i+n-1}\}$ to the vertices of $G$, where $n=n(G)$, yields an $n$-dominating broadcast of $G$, which is at the same an $S$-packing coloring. Therefore, $$\chi_{S,n}(G)\le i+n(G)-1,$$
and by Observation~\ref{obs:monotonicity}, we infer that $\chi_{S,k}(G)<\infty$ for all $k\le n(G)$. 
\end{proof}

Next, we present a relation between two concepts from Definitions~\ref{def:2} and~\ref{def:3}, which provides a version of duality between them.

\begin{observation}
\label{ob:domandchi}
Let $G$ be a graph, $S=(s_1,s_2,\ldots)$ a non-decreasing sequence of positive integers and $k$ a positive integer. Then, $\chi_{S,k}(G)=\chi_S(G)$ if and only if $\dom_{\chi_S}(G)\ge k$.
\end{observation}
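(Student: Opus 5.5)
Both directions follow by unwinding Definitions~\ref{def:2} and~\ref{def:3}; the one point needing care is that $\dom_{\chi_S}(G)$ refers to colorings using only the first $\chi_S(G)$ colors when $S$ is infinite. Throughout, write $t_0=\chi_S(G)$ and $S'=(s_1,\ldots,s_{t_0})$, and assume $G$ is $S$-packing colorable, so that both invariants are defined.

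First I record that $\chi_{S,k}(G)\ge \chi_S(G)$ always holds. Indeed, an $S$-packing $k$-domatic coloring using colors in $[t]$ is in particular an $S$-packing coloring with largest color at most $t$, so $t\ge t_0$.

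($\Leftarrow$) Suppose $\dom_{\chi_S}(G)\ge k$. By definition there is an $S'$-packing $k'$-domatic coloring $f$ with $k'\ge k$; if $S$ is finite, the same holds with $S$ itself, and since $G$ is $S$-packing colorable with $\chi_S(G)=t_0$ the reasoning is identical. So $f$ uses colors only in $[t_0]$, satisfies the packing conditions for $s_1,\ldots,s_{t_0}$ (hence is an $S$-packing coloring), and admits a broadcast domatic $k'$-partition. Merging parts of this partition, exactly as in the argument preceding Observation~\ref{obs:monotonicity}, yields a broadcast domatic $k$-partition. Merging is valid because a union of sets each of whose restriction dominates still dominates. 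Hence $\chi_{S,k}(G)\le t_0$, and with the lower bound, equality follows.

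($\Rightarrow$) Suppose $\chi_{S,k}(G)=t_0$. Then there is an $S$-packing $k$-domatic coloring $f$ with $f(V(G))\subseteq[t_0]$. Only the terms $s_1,\ldots,s_{t_0}$ are used, so $f$ is an $S'$-packing coloring and also a $k$-dominating broadcast. Therefore $f$ is an $S'$-packing $k$-domatic coloring, and by Definition~\ref{def:3} we get $\dom_{\chi_S}(G)=\dom_{\chi_{S'}}(G)\ge k$.

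The main obstacle is not technical. It is the bookkeeping that identifies $S$-packing colorings with colors in $[t_0]$ with $S'$-packing colorings, in both directions. Also, if $S$ is finite, $\dom_{\chi_S}$ is defined directly over $S$; I would treat that case by noting that any $S$-packing coloring automatically has colors in $[|S|]$ and applying the same two arguments verbatim.
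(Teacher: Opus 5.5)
Your argument is correct for infinite $S$, which is the case the paper treats, and it is essentially the paper's proof. The paper's ($\Leftarrow$) direction takes $r=\dom_{\chi_{S'}}(G)\ge k$ and applies Observation~\ref{obs:monotonicity} to pass from $r$ to $k$, which is exactly your merging of parts. It also uses tacitly the lower bound $\chi_{S,k}(G)\ge\chi_S(G)$ that you state explicitly, and the ($\Rightarrow$) direction is identical.

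What needs correcting is your claim that the finite case goes through verbatim. The paper's proof explicitly uses that $S$ is infinite, and this assumption is necessary.

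\begin{itemize}
\item For finite $S=(s_1,\ldots,s_m)$, Definition~\ref{def:3} lets $\dom_{\chi_S}(G)$ range over all $S$-packing colorings with colors in $[m]$, and $m$ may exceed $\chi_S(G)$.
\item So in your ($\Leftarrow$) step the inference ``so $f$ uses colors only in $[t_0]$'' fails. You only obtain $\chi_{S,k}(G)\le m$.
\item The equivalence itself is false there. Take $S=(1,2,3,4,5)$ and $G=P_4$. The coloring $5,2,3,4$ with singleton parts is an $S$-packing $4$-domatic coloring, so $\dom_{\chi_S}(P_4)=4$.
\item Yet $\chi_{S,4}(P_4)=5\ne 3=\chi_S(P_4)$. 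A $4$-domatic partition of four vertices forces singleton parts, each vertex alone must dominate $P_4$, and this forces four distinct colors, all at least $2$.
\end{itemize}

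For finite $S$ only ($\Rightarrow$) survives, plus the full equivalence in the special case $m=\chi_S(G)$. Drop the finite-case remark and state explicitly that $S$ is infinite.
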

\begin{proof}
Let $t=\chi_{S,k}(G)<\infty$. Thus there is an $S$-packing $k$-domatic coloring $f$ of $G$ using colors in $[t]$. If $t=\chi_S(G)$, we derive that $f$ is an $S$-packing coloring of $G$ using $\chi_S(G)$ colors, which is at the same time a $k$-dominating broadcast. Thus, $\dom_{\chi_S}(G)\ge k$. 

 For the converse, note that since $S$ is infinite, $\dom_{\chi_S}(G)$ is defined as $\dom_{\chi_{S'}}(G)$, where $S'=(s_1,\ldots,s_{\chi_S(G)})$. Let $r=\dom_{\chi_{S'}}(G)$, and note that $r\ge k$. By definition, there exists an $S$-packing $r$-domatic coloring of $G$ using $\chi_S(G)$ colors, thus $\chi_{S,r}(G)=\chi_S(G)$. By applying Observation~\ref{obs:monotonicity}, we infer that $\chi_{S,k}(G)\le \chi_{S,r}(G)=\chi_S(G)$, which gives the desired equality. 
\end{proof}

The concepts of domatic partition and domatic number were first introduced by Cockayne and Hedetniemi in 1977 in their seminal paper~\cite{cock1977}, and were followed by a number of studies of these concepts and their variants; see the monographs on graph domination~\cite{HaHeHe-20, HaHeHe-21,HaHeHe-23}. 
A \textit{domatic partition} of a graph $G$ is a partition of the vertex set $V(G)$ into disjoint subsets $\pi = \{V_1,  \dots, V_k\}$ such that
each subset $V_i$ (for $1 \le i \le k$) is a dominating set of $G$.
The \textit{domatic number} of a graph $G$, denoted by $\dom(G)$, is the maximum number of sets in a domatic partition of $G$, that is,
$
\dom(G) = \max \{ |\pi| :\, \pi \text{ is a domatic partition of } G \}.
$
It is a well known classical result that $\dom(G)\ge 2$ in every graph $G$ without isolated vertices, while $\dom(G)\le \delta(G)+1$ is a trivial upper bound on the domatic number of a graph; see~\cite{GH2021,HaHeHe-21}. 
Thus, one can partition such a graph into two dominating sets, and the same partition can be used also in any $S$-packing $2$-domatic coloring.

\begin{observation}
\label{obs:domatic}
If $G$ is a graph with no isolated vertices and $S=(s_1,s_2,\ldots)$ an infinite non-decreasing sequence of positive integers, then $$\dom_{\chi_S}(G)\ge 2.$$    
\end{observation}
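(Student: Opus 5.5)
The plan is to take an optimal $S$-packing coloring and partition its vertex set into two ordinary dominating sets. This works because every color is a positive integer, so each vertex broadcasts to at least its closed neighborhood.

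First, fix an $S$-packing coloring $f$ of $G$ using colors in $[\chi_S(G)]$. Such a coloring exists with $\chi_S(G)$ finite. One way to see this is to color each vertex with its own distinct color, since any vertex set of distinct colors is trivially a packing. If $G$ is infinite one would appeal to the definition instead; in the context here graphs are finite. Thus $f$ is an $S'$-packing coloring for $S'=(s_1,\ldots,s_{\chi_S(G)})$, as Definition~\ref{def:3} requires.

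Second, invoke the classical result~\cite{cock1977,zel-1981} that a graph without isolated vertices admits a partition $V(G)=A_1\cup A_2$ into two dominating sets. For instance, let each component have a spanning tree and take $A_1,A_2$ to be the two bipartition classes. Both are nonempty in each component because no component is a single vertex, and every vertex has a tree neighbor in the other class.

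Third, check that $f|_{A_i}$ is a dominating broadcast for each $i\in[2]$. Take any $x\in V(G)$. Since $A_i$ is dominating, there is $w\in A_i$ with $d_G(x,w)\le 1$. Moreover $f(w)\ge 1$, as colors lie in $\mathbb{N}$. Hence $d_G(x,w)\le f(w)$, which is exactly condition~\eqref{eq:dombroad}. Therefore $f$ is an $S'$-packing $2$-domatic coloring, and Definition~\ref{def:3} gives $\dom_{\chi_S}(G)\ge 2$.

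There is no real obstacle here. The only point needing care is the bookkeeping that the coloring uses exactly the first $\chi_S(G)$ colors, so that it is admissible in the definition with the truncated sequence $S'$.
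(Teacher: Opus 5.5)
Your proof is correct and follows the paper's argument. The paper partitions $V(G)$ into two dominating sets via the classical result $\dom(G)\ge 2$, and observes that since every color is at least $1$, each set is a dominating broadcast under any $S$-packing coloring with $\chi_S(G)$ colors; your only addition is a self-contained spanning-forest proof of that two-set partition, which the paper simply cites.
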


Combining Observations~\ref{ob:domandchi} and~\ref{obs:domatic} we infer that for any graph $G$ with no isolated vertices and any infinite non-decreasing sequence of positive integers $S$, $$\chi_{S,2}(G)=\chi_S(G).$$ 

Thus, Observation~\ref{obs:domatic} presents a trivial generalization of the well known fact about the partition of the vertex set of an isolate-free graph into two dominating sets. In this paper, we will extend this result by showing that the vertex set of a graph $G$ with $\delta(G)\ge 2$ can be partitioned into more than two sets by using the setting of $S$-packing domatic colorings.  

As indicated earlier, the $S$-packing domatic number indeed presents a vast generalization of the domatic number. Notably, if $S$ is the infinite sequence of all $1$s, then the $S$-packing domatic number does not depend on the coloring (which is just a proper coloring of $G$), but solely on the number of dominating sets into which $G$ can be partitioned. 

\begin{observation}
\label{obs:domgen}
If $G$ is a graph with no isolated vertices and $S=(1,1,\ldots)$ is the sequence with all terms equal to $1$, then $$\dom_{\chi_S}(G)=dom(G).$$    \end{observation}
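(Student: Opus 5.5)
The statement to prove is Observation~\ref{obs:domgen}: for $S=(1,1,\ldots)$ and a graph $G$ with no isolated vertices, $\dom_{\chi_S}(G)=\dom(G)$. The plan is to unpack Definition~\ref{def:3} for this constant sequence and prove the two inequalities separately. Two facts drive everything. First, an $S$-packing coloring with $S=(1,1,\ldots)$ is exactly a proper coloring, since same-colored vertices must be at distance greater than $1$. Second, every color $f(v)\ge 1$ and the set of colors is only used as broadcast ranges, so the real question is how large those ranges can be. By Definition~\ref{def:3}, $\dom_{\chi_S}(G)=\dom_{\chi_{S'}}(G)$ with $S'=(s_1,\ldots,s_{\chi(G)})$, because $\chi_S(G)=\chi(G)$. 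Hence every coloring under consideration uses colors in $[\chi(G)]$.

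For the inequality $\dom_{\chi_S}(G)\le \dom(G)$, take an $S'$-packing $k$-domatic coloring $f$ with partition $A_1,\ldots,A_k$ realizing the maximum. This is the step needing care: as written, the broadcast range of a vertex $w$ is $f(w)$, which can exceed $1$. Under that literal reading, a vertex could dominate vertices at distance $f(w)>1$, and the sets $A_i$ need not be ordinary dominating sets. I would resolve this by reading the broadcast value of a vertex of color $j$ as $s_j$, the intended meaning of the general $S$-framework, which coincides with $f(w)=j$ when $S=(n)_{n\in\mathbb{N}}$. For the constant sequence this gives range $s_j=1$ for every vertex. Then "$d(x,w)\le 1$ for some $w\in A_i$" says exactly that $A_i$ is a dominating set. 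So $\{A_1,\ldots,A_k\}$ is a domatic partition, and $k\le \dom(G)$. I would state this interpretation explicitly. Under the purely literal reading, the equality can fail: for instance, in a long path a proper $2$-coloring with colors $\{1,2\}$ already allows some range-$2$ broadcasts.

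For the inequality $\dom_{\chi_S}(G)\ge \dom(G)$, take a domatic partition $V_1,\ldots,V_d$ with $d=\dom(G)$, and let $f$ be any optimal proper coloring, i.e.\ one using $\chi(G)$ colors. Since the coloring and the partition are independent objects, and every vertex has range $1$, each $f|_{V_i}$ is a dominating broadcast. Therefore $f$ is an $S'$-packing $d$-domatic coloring.

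Combining the two inequalities gives equality. The hypothesis that $G$ has no isolated vertices only guarantees $\dom(G)\ge 2$, which keeps the statement consistent with Observation~\ref{obs:domatic}. The main obstacle is the interpretive point in the upper bound, namely fixing the broadcast ranges as $s_{f(v)}$; once that is settled, both directions are immediate.
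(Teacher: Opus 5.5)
Your argument is correct and matches the paper's brief justification: with $S=(1,1,\ldots)$ the coloring is just a proper coloring and every vertex has broadcast range $1$. The parts of an $S$-packing domatic partition are therefore exactly the classes of a domatic partition, and conversely any domatic partition works with any optimal proper coloring. Your interpretive caveat is well founded, because the paper's stated definition uses range $f(v)$, and under that reading the observation fails. For example, $P_3$ colored $2,1,2$ with three singleton parts gives $\dom_{\chi_S}(P_3)=3>2=\dom(P_3)$. So the paper's justification tacitly uses ranges $s_{f(v)}$, exactly as you do.
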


Next, we focus on diameter-2 graphs and sequence with only one integer $1$. Recall that $\alpha (G)$, resp.\ $\beta(G)$, is the {\em independence number}, resp.\ {\it the vertex cover number} of $G$, defined as the maximum number of pairwise non-adjacent vertices, resp.\ the minimum number of vertices that are incident with every edge of $G$.  By Gallai's theorem $\alpha(G)+\beta(G)=n(G)$ in any graph $G$; cf.~\cite{West}. 
\begin{proposition}
\label{prp:diam2}
If $G$ is a graph with ${\rm diam}(G)=2$ and $S$ is a non-decreasing sequence of positive integers with $s_1=1$ and $s_2\ge 2$, then $\dom_{\chi_S}(G)=n(G)-\alpha(G)+1=\beta(G)+1$. 
\end{proposition}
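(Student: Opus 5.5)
Since ${\rm diam}(G)=2$ and $s_2\ge 2$, every color class $P_i$ with $i\ge 2$ is an $s_i$-packing with $s_i\ge 2={\rm diam}(G)$, so it contains at most one vertex. Color $1$ (with $s_1=1$) forms an independent set. Hence any $S$-packing coloring of $G$ consists of an independent set $P_1$ together with singleton classes. Conversely, such a coloring exists for every independent set $I$, using $n-|I|$ distinct colors from $\{2,3,\ldots\}$.

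To minimize the number of colors we take $|I|=\alpha(G)$, so $\chi_S(G)=n(G)-\alpha(G)+1$ (with $P_1$ nonempty as long as $G$ has a vertex). By Definition~\ref{def:3}, $\dom_{\chi_S}(G)$ therefore concerns exactly the colorings whose color-$1$ class is a maximum independent set $I$ and whose remaining $\beta(G)$ vertices receive pairwise distinct colors in $\{2,\ldots,\chi_S(G)\}$.

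\textbf{Upper bound.} Fix such a coloring $f$ with $P_1=I$. A broadcast restricted to a set $A_j$ dominates only if $A_j$ contains a vertex of positive value. Since $f$ assigns positive values everywhere, any nonempty $A_j$ at least contains a broadcaster. The real bound comes from counting: each $A_j$ in a partition into $k$ dominating broadcasts is nonempty. Moreover, at most one $A_j$ can consist only of vertices of $I$, because then $f|_{A_j}$ is an ordinary dominating set contained in $I$, and such an $A_j$ must in fact equal $I$. Indeed, if $I$ is maximum independent and $A_j\subsetneq I$, then a vertex $x\in I\setminus A_j$ has distance at least $2$ to $A_j$ and is not dominated. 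Thus at most one part is contained in $I$ and every other part contains a vertex of $V\setminus I$, giving $k\le \beta(G)+1$. This counting step, namely that two parts lying inside $I$ are impossible because $I$ is independent, is the only point needing care. If $G$ has isolated vertices the diameter is infinite, so $G$ is connected with at least one edge and the argument is fine.

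\textbf{Lower bound.} Take $A_0=I$ and, for each $v\in V\setminus I$, the singleton $A_v=\{v\}$. The vertex $v$ has $f(v)\ge 2\ge{\rm diam}(G)$, so $f|_{\{v\}}$ dominates $G$, since $s$-values satisfy $f(v)\ge 2$ as colors are at least $2$. The set $I$ is a maximum, hence maximal, independent set, so it is a dominating set and $f|_I$ (all values $1$) dominates $G$. This yields a partition into $\beta(G)+1$ dominating broadcasts. Gallai's theorem then gives $\beta(G)+1=n(G)-\alpha(G)+1$, which completes the proof.
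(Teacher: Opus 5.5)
Your proof is correct and follows the paper's approach: the lower bound uses the same partition as the paper, namely a maximum independent set (the color-$1$ class) together with the $\beta(G)$ singletons of colors at least $2$. You also supply the two steps the paper only calls ``easy to see'': first, $\chi_S(G)=n(G)-\alpha(G)+1$, because every color class other than $1$ is a singleton; second, the upper bound, because any dominating part contained in $I$ must equal $I$, so at most one part lies inside $I$ and every other part uses a distinct vertex of $V(G)\setminus I$.
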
 
\begin{proof}
It is easy to see that $\chi_S(G)=n(G)-\alpha(G)+1$. Let $f$ be the corresponding $S$-packing coloring. Now, consider the partition $\pi=\{D_1,\ldots,D_{n-\alpha(G)+1}\}$ of $V(G)$, where $D_1$ is a maximum independent set of $G$ and $D_i$, where $i\ge 2$, is a singleton. Clearly, every set $D_i$ $f$-dominates $G$, hence $f$ is a packing $(n-\alpha+1)$-domatic coloring, thus $\dom_{\chi_S}(G)\ge n-\alpha+1$. It is easy to see that $\dom_{\chi_S}(G)$ cannot be bigger than $n(G)-\alpha(G)+1$.  
\end{proof}

It is well known that the decision version of the independence number is NP-complete~\cite{gj}, and it clearly remains NP-complete when restricted to diameter-2 graphs.

\begin{corollary}
Let $S$ be a non-decreasing sequence of positive integers with $s_1=1$ and $s_2\ge 2$. Then, the decision version of $\dom_{\chi_S}$ is NP-complete even when restricted to graphs with diameter $2$. 
 \end{corollary}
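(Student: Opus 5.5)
We prove NP-completeness by a reduction that rests on Proposition~\ref{prp:diam2}. Fix $S$ with $s_1=1$ and $s_2\ge 2$. The decision problem asks: given a graph $G$ and an integer $k$, is $\dom_{\chi_S}(G)\ge k$? We take as source problem the decision version of the independence number (equivalently, of the vertex cover number), restricted to graphs of diameter $2$. This restriction is NP-complete, as noted just before the corollary.

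Given an instance $(G,a)$, where $G$ has diameter $2$ and we ask whether $\alpha(G)\ge a$, we output the instance $(G,\,n(G)-a+1)$ of $\dom_{\chi_S}$. By Proposition~\ref{prp:diam2}, $\dom_{\chi_S}(G)=n(G)-\alpha(G)+1$. Hence $\dom_{\chi_S}(G)\ge n(G)-a+1$ holds if and only if $\alpha(G)\ge a$. The map is computed in polynomial time because it only changes the integer parameter.

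If one wants to be careful about the restriction itself, it can be justified as follows. Start from an arbitrary graph $H$ and add a universal vertex $u$ to obtain $H'$. If $H$ is not complete, then $H'$ has diameter $2$, and $\alpha(H')=\alpha(H)$ whenever $H$ has at least one vertex. If $H$ is complete, the answer is trivial. So the independence number remains NP-hard on diameter-$2$ graphs.

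Membership in NP needs a certificate that is checkable in polynomial time. For diameter-$2$ graphs we could use a large independent set, via the formula of Proposition~\ref{prp:diam2}. For general inputs the natural certificate is a coloring $f$ together with a partition $A_1,\ldots,A_k$ of $V(G)$. Checking the $S$-packing condition and the domination of every vertex by every $A_i$ takes polynomial time, using all-pairs distances; if $S$ is given explicitly on the needed range, checking $s_i$ is easy. The main subtlety is the requirement that $f$ use exactly $\chi_S(G)$ colors, since computing $\chi_S(G)$ is itself hard. This is the step I expect to cause difficulty. I would handle it by reading the corollary, as the paper does, as concerning the restricted class. On diameter-$2$ graphs we know $\chi_S(G)=n(G)-\alpha(G)+1$ and $\dom_{\chi_S}(G)=\beta(G)+1$. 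A certificate is then an independent set $I$ with $|I|\ge n(G)-k+1$, from which one constructs the optimal coloring and the partition as in the proof of Proposition~\ref{prp:diam2}. This places the restricted problem in NP, and together with the reduction above it completes the proof.
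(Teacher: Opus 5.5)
Your route is the paper's: Independent Set on diameter-$2$ graphs combined with Proposition~\ref{prp:diam2}. Your universal-vertex argument usefully makes explicit the paper's remark that Independent Set stays NP-complete on diameter-$2$ graphs.

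\textbf{The gap.} The central equivalence is backwards. Since $\dom_{\chi_S}(G)=n(G)-\alpha(G)+1$ is \emph{decreasing} in $\alpha(G)$, one has $\dom_{\chi_S}(G)\ge n(G)-a+1$ if and only if $\alpha(G)\le a$, not $\alpha(G)\ge a$. Concretely, $C_5$ has diameter $2$, $\alpha(C_5)=2$ and $\dom_{\chi_S}(C_5)=4$. For $a=1$, the instance $(C_5,1)$ of Independent Set is a yes-instance, while $\dom_{\chi_S}(C_5)\ge 5$ is false.

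This cannot be patched within the formulation you fixed. On diameter-$2$ graphs, ``is $\dom_{\chi_S}(G)\ge k$?'' is the same question as ``is $\alpha(G)\le n(G)-k+1$?'', i.e.\ ``is $\beta(G)\ge k-1$?''. That is the complement of Vertex Cover, which is coNP-complete, so it is NP-complete only if $\mathrm{NP}=\mathrm{coNP}$.

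Your membership argument has the same reversal. An independent set $I$ with $|I|\ge n(G)-k+1$ certifies $\alpha(G)\ge n(G)-k+1$, that is, $\dom_{\chi_S}(G)\le k$. Moreover, you cannot build ``the optimal coloring'' from $I$, since nothing certifies that $I$ is maximum.

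\textbf{The fix.} Take the decision version to be ``given $G$ and $k$, is $\dom_{\chi_S}(G)\le k$?''. On diameter-$2$ graphs this is exactly Vertex Cover ($\beta(G)\le k-1$). The paper does not fix a direction, and its one-line argument yields NP-completeness precisely for this version.

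With this formulation, your map $(G,a)\mapsto(G,n(G)-a+1)$ is a correct reduction, because $\dom_{\chi_S}(G)\le n(G)-a+1$ if and only if $\alpha(G)\ge a$. Your certificate also works: an independent set of size at least $n(G)-k+1$, checked only for independence and size, puts the restricted problem in NP.

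If you keep the ``$\ge k$'' version, the correct conclusion is coNP-completeness, which gives NP-hardness only under Turing reductions, not NP-completeness.
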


Combining Observations~\ref{ob:domandchi} and~\ref{obs:domatic} we infer that $\chi_{S,2}(G)=\chi_S(G)$ holds for any graph without isolated vertices and any infinite non-decreasing sequence $S$ of positive integers. As a consequence of this equality, we obtain a complexity result concerning the sequence $S=(1,2,3,\ldots)$, by using also that the decision version of the packing chromatic number is NP-complete even in the class of trees~\cite{fg-2010}. 

\begin{corollary}
The decision version of $\chi_{\rho,2}$ is NP-complete even when restricted to trees. 
\end{corollary}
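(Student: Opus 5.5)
The plan is to reduce the statement to the two facts combined just before it. By Observations~\ref{ob:domandchi} and~\ref{obs:domatic}, $\chi_{S,2}(G)=\chi_S(G)$ holds for every graph without isolated vertices and every infinite non-decreasing sequence $S$. For $S=(1,2,3,\ldots)$ this reads $\chi_{\rho,2}(G)=\chi_\rho(G)$. Every tree on at least two vertices is connected, so it has no isolated vertices. Hence on the class of trees the decision problems ``is $\chi_{\rho,2}(T)\le t$?'' and ``is $\chi_\rho(T)\le t$?'' have the same answer for every instance $(T,t)$.

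Next I would establish membership in NP. A certificate consists of a coloring $f:V(T)\to[t]$ together with a partition $\{A_1,A_2\}$ of $V(T)$. We may assume $t\le n(T)$: otherwise the answer is trivially yes, since giving all vertices distinct colors from $[n]$ is a packing coloring, and Observation~\ref{obs:domatic} supplies the partition. Verification computes all pairwise distances by BFS. It then checks that any two vertices $u\ne v$ with $f(u)=f(v)=j$ satisfy $d(u,v)>j$. Finally, for each vertex $x$ and each $i\in\{1,2\}$, it checks that some $w\in A_i$ has $d(x,w)\le f(w)$. All of this is polynomial in $n(T)$.

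For NP-hardness I would use the identity reduction from the packing chromatic number problem on trees, which is NP-complete by~\cite{fg-2010}. An instance $(T,t)$ maps to itself. By the equality above, it is a yes-instance for $\chi_\rho$ if and only if it is a yes-instance for $\chi_{\rho,2}$. One small point needs care: the trivial tree $K_1$ has an isolated vertex, and there $k=2>n(G)$, so $\chi_{\rho,2}$ is not defined by Definition~\ref{def:2}. I would simply exclude it, since $\chi_\rho(K_1)=1$ makes those instances trivial; hardness is unaffected. No step is a real obstacle. The only points to state explicitly are the bound $t\le n$, which keeps the certificate polynomial in size, and the exclusion of $K_1$.
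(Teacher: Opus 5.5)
Your proposal is correct and follows the paper's argument: the identity $\chi_{\rho,2}(G)=\chi_\rho(G)$ for isolate-free graphs, obtained from Observations~\ref{ob:domandchi} and~\ref{obs:domatic}, transfers the known NP-completeness of the packing chromatic number on trees directly to $\chi_{\rho,2}$. Your explicit treatment of NP membership and of the degenerate tree $K_1$ only fills in details that the paper leaves implicit.
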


We mention that Corollary~\ref{cor:completexity3} will provide  NP-completeness for $\chi_{\rho,3}$ restricted to a different family of graphs (namely, graphs with diameter $2$ and $\delta(G)\ge 2$).

\section{The invariant $\chi_{\rho,k}$ in paths and cycles}
\label{sec:paths}

In this section, we determine exact values or bounds on the packing $k$-domatic chromatic numbers for (finite and infinite) paths and cycles for several values of $k$. The most extensive are our investigations in finite paths, where we obtain  bounds on $\chi_{\rho, k}(P_n)$ for an arbitrary value of $k$ and $n$.

\subsection{Finite paths}

In this subsection, the consecutive vertices of the path $P_n$, where $n\ge3$, are denoted by $u_1,u_2,\ldots,u_n$.
Recall that $\chi_\rho(P_3)=2$ and $\chi_\rho(P_n)=3$ for every $n\ge4$. Consequently, $\chi_{\rho,k}(P_3)\ge2$ and $\chi_{\rho,k}(P_n)\ge3$ for every positive integer $k$ and $n \geq 4$. The following  auxiliary result establishes a general lower bound on $\chi_{\rho,k}(P_n)$, which holds for any $n\ge 3$ and any $k\ge 3$. 

\begin{lemma}
\label{lem:paths_lower}
    If $k \geq 3$ and $n \geq 3$, then $\chi_{\rho, k}(P_n) \geq k$.
\end{lemma}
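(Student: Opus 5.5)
Assume, towards a contradiction, that $f$ is a packing $k$-domatic coloring of $P_n$ with colors in $[t]$, $t\le k-1$, and with partition $A_1,\ldots,A_k$. The plan is to look at an end vertex, say $u_1$, and count how many of the sets $A_i$ can possibly $f$-dominate it. Every $A_i$ must contain a vertex $w$ with $d(u_1,w)\le f(w)$. Since $d(u_1,u_j)=j-1$, such a $w=u_j$ satisfies $f(u_j)\ge j-1$. Moreover $f(u_j)\le t\le k-1$, so only $u_1,\ldots,u_{k}$ are candidates, that is, $j\le t+1\le k$.

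The key step is a counting argument showing there are fewer than $k$ vertices $u_j$ with $f(u_j)\ge j-1$. Each $A_i$ needs at least one such vertex, and the sets are disjoint, so at least $k$ such vertices are required.

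\begin{itemize}
\item For $j\ge 2$, a vertex $u_j$ qualifies only if $f(u_j)\ge j-1$.
\item Two qualifying vertices with the same color $c$ would both lie among $u_1,\ldots,u_{c+1}$, hence at distance at most $c$. This contradicts the packing condition. So the qualifying vertices have pairwise distinct colors in $[t]$.
\item There are therefore at most $t\le k-1<k$ qualifying vertices, a contradiction.
\end{itemize}

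The distinct-colors claim is the crux, and it needs only that a vertex of color $c$ reaching $u_1$ lies within distance $c$ of $u_1$. Two vertices of the same color $c$ both within distance $c$ of $u_1$ on the same side lie on a segment of length at most $c$, so their distance is at most $c$. This is forbidden.

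The main care needed is that $u_1$ is an end vertex, so all dominators lie on one side. This is also why $n\ge 3$ and $k\ge 3$ are only needed so that the statement is nontrivial and consistent with $\chi_\rho(P_n)$. For $k\ge3$ the argument gives $t\ge k$ directly, so the bound $\chi_{\rho,k}(P_n)\ge k$ follows.

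Note that the argument also covers short paths, where $n<t+1$. In that case only the existing vertices qualify, and the count is even smaller.
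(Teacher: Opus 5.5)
Your proof is correct, but it applies the pigeonhole principle to colors rather than to positions, which is a different route from the paper's.

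The paper notes that with at most $k-1$ colors the only possible $f$-dominators of $u_1$ are the $k$ vertices of $N_{k-1}[u_1]=\{u_1,\ldots,u_k\}$. So each $A_i$ contains exactly one of them, and all of them must $f$-dominate $u_1$. Descending along the path, this forces $f(u_k)=k-1$, $f(u_{k-1})=k-2,\ldots,f(u_2)=1$. Then $f(u_1)=i-1=f(u_i)$ for some $i\in\{2,\ldots,k\}$, contradicting $d(u_1,u_i)=i-1$.

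You instead bound the number of $f$-dominators of $u_1$ by the number of colors: all dominators of color $c$ lie in the segment $u_1,\ldots,u_{c+1}$, which has diameter $c$. This is exactly the claim with which the paper opens the proof of Theorem~\ref{theorem_chi_rho_k_12}, so your version unifies the two arguments. It also needs no forcing cascade, and it does not depend on the ball $N_{k-1}[u_1]$ having exactly $k$ vertices (the paper tacitly assumes $n\ge k$ there).

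Your argument also transfers at once to $P_\infty$. Three vertices of color $c$ within distance $c$ of $u_0$ cannot be pairwise more than $c$ apart, so $u_0$ has at most $2t$ dominators. This gives $t\ge k/2$, which is equivalent to the paper's separately proved bound $\chi_{\rho,k}(P_\infty)>\frac{k-1}{2}$.

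The paper's positional count, by contrast, is tied to exactly $k-1$ colors. What it offers is an explicit description of the unique configuration near the end of the path that the contradiction then rules out.
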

\begin{proof}
Let $k \geq 3$ and $n \geq 3$ be arbitrary integers. Suppose that $\chi_{\rho, k}(P_n) \leq k-1$ and denote by $f$ a packing $k$-domatic coloring using $k-1$ colors. Further, let $\Pi=\{A_1, A_2,\ldots,A_k\}$ be an arbitrary partition of $V(P_n)$ such that for each $i \in [k]$, $f|_{A_i}$ is a dominating broadcast of $P_n$. In particular, for each $i \in [k]$, there exists a vertex $v \in A_i$ such that $d(u_1, v) \leq f(v)$. Since $f(v) \leq k-1$ for any $v$, each of the sets $A_i$ contains at least one vertex from $N_{k-1}[u_1]=\{u_1, u_2, \ldots, u_k\}$. Since the sets in $\Pi$ are pairwise disjoint and $|\Pi|=|N_{k-1}[u_1]|=k$, each set $A_i$ contains exactly one of the vertices from $N_{k-1} [u_1]$. This implies that $f(u_k)=k-1$, $f(u_{k-1})=k-2$, $\ldots$, $f(u_2)=1$ and $f(u_1)=f(u_i)=i-1$ for some $i \in \{2,3,\ldots, k\}$, which contradicts the fact that $f$ is a packing coloring since $d(u_1, u_i)=i-1$. Thus, $\chi_{\rho, k}(P_n) \geq k$.
\end{proof}

\begin{proposition}
    \label{prop_chi_rho_3_finite_paths}
For any path $P_n$, where $n \geq 3$, 
$\chi_{\rho, 3}(P_n) =3.$
\end{proposition}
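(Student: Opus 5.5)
The lower bound comes directly from Lemma~\ref{lem:paths_lower} with $k=3$, which gives $\chi_{\rho,3}(P_n)\ge 3$ for all $n\ge 3$. It therefore suffices to construct, for every $n\ge 3$, a packing coloring $f:V(P_n)\to[3]$ together with a partition $\{A_1,A_2,A_3\}$ of $V(P_n)$ such that each $f|_{A_i}$ is a dominating broadcast.

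For the construction I will use the standard periodic packing $3$-coloring of a path with period $1,2,1,3$, placed so that the endpoints receive large colors. Concretely, set $f(u_1)=3$, $f(u_2)=1$, $f(u_3)=2$, $f(u_4)=1$, $f(u_5)=3$, and continue periodically. This is a packing coloring: the $1$'s are at distance $2$, and the $2$'s and $3$'s are each at distance $4$.

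For the partition, put all vertices colored $1$ into $A_1$, all vertices colored $2$ into $A_2$, and all vertices colored $3$ into $A_3$. Each class then dominates the path:
\begin{itemize}
\item $A_1$: vertices colored $1$ occupy every other position, so every vertex is within distance $1$ of one, except possibly an endpoint.
\item $A_2$: vertices colored $2$ occur every $4$ positions with radius $2$, so every vertex lies within distance $2$ of one.
\item $A_3$: vertices colored $3$ occur every $4$ positions with radius $3$, which covers everything with room to spare.
\end{itemize}
The only trouble is at the two ends of the path. At the left end, $u_1$ has color $3$, and $u_1$ is within distance $1$ of the $1$ at $u_2$ and within distance $2$ of the $2$ at $u_3$, so it is covered by all three classes. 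At the right end, coverage depends on $n \bmod 4$ (and small $n$ need care). If $u_n$ has color $3$ or $2$, it is covered by $A_1$ via $u_{n-1}$. If $u_n$ has color $1$, it is fine. If the last vertex colored $2$ or $3$ is far from $u_n$, it is still within distance $3$, hence covered.

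I expect the main obstacle to be the short paths $P_3,\dots,P_7$ and certain residues of $n$ where the periodic pattern fails at the right end. For these I will instead choose the coloring ad hoc, or reverse or shift the pattern so that both endpoints carry large colors. Some examples:
\begin{itemize}
\item $P_3$: $3,1,2$, with each vertex forming its own class. Here $u_3$ reaches $u_1$ with radius $2$, and $u_1$ has radius $3$, so each singleton dominates.
\item $P_4$: $3,1,2,1$, split as $A_1=\{u_2,u_4\}$, $A_2=\{u_3\}$, $A_3=\{u_1\}$.
\end{itemize}
In general, I will handle the right end by a case split on $n\bmod 4$, checking that the endpoint $u_n$ is covered by a $1$-neighbor, by a $2$ within distance $2$, and by a $3$ within distance $3$. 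Where a check fails, I will move a boundary vertex to a different class or recolor the end (e.g.\ ending with $\ldots,1,3$ or $\ldots,2,1$) to repair it.
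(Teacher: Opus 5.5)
Your lower bound and overall strategy (a periodic $1,2,1,3$-type packing coloring, partitioned into its color classes) are the same as the paper's. However, the coloring you actually specify fails for $n\equiv 2\pmod 4$, and your right-end check hides this. In $3,1,2,1,3,1,2,1,\ldots$ the vertices of color $2$ are the $u_i$ with $i\equiv 3\pmod 4$. So for $n\equiv 2\pmod 4$ the path ends with $\ldots,2,1,3,1$, and the last vertex of color $2$ is $u_{n-3}$, at distance $3>2$ from $u_n$. Already for $P_6$, colored $3,1,2,1,3,1$, the only vertex of color $2$ is $u_3$, and it does not $f$-dominate $u_6$.

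Your sentence ``if the last vertex colored $2$ or $3$ is far from $u_n$, it is still within distance $3$, hence covered'' holds for color $3$ but is false for color $2$, whose broadcast radius is only $2$. Likewise, ``if $u_n$ has color $1$, it is fine'' is exactly the failing case. Nor can this be fixed by moving vertices between classes, as you suggest. In this coloring the only vertices that $f$-dominate $u_n$ are $u_n$ and $u_{n-1}$, so no partition into three dominating broadcasts exists, and the coloring itself must change.

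The repair is easy once the bad residue is identified. For $n\equiv 2\pmod 4$, use either the shifted pattern $2,1,3,1,2,1,3,1,\ldots$ (ending with $\ldots,3,1,2,1$) or the pattern $1,2,1,3,\ldots$ (ending with $\ldots,1,3,1,2$); the three color classes then dominate at both ends. This mirrors the paper's proof. It uses $[1,2,1,3]$ for $n\not\equiv 1\pmod 4$ and switches to the phase $2,1,3,1,\ldots$ exactly in the one residue class, $n\equiv 1\pmod 4$, where $[1,2,1,3]$ leaves $u_n$ at distance $3$ from the last $2$. Your residues $n\equiv 0,1,3\pmod 4$ and your examples for $P_3$ and $P_4$ are correct, so with this case made explicit the proof would be complete. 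As written, though, the construction, which is the entire content of the upper bound, is missing for an infinite family of values of $n$.
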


\begin{proof}
    Let $n \geq 3$ be an arbitrary integer. By Lemma~\ref{lem:paths_lower}, we have $\chi_{\rho, 3}(P_n) \geq 3$. To prove that $\chi_{\rho, 3}(P_n) \leq 3$, we form a packing $3$-domatic coloring $f$ of $P_n$ using $3$ colors as follows. If $n=3$, then let $f(u_1)=2$, $f(u_2)=1$ and $f(u_3)=3$. In the case when $n >3$ and $n \not\equiv 1 \ (\mathrm{mod} \ 4)$, color the consecutive vertices of $P_n$ one after another by applying the color pattern $[1,2,1,3]$. Finally, if $n >3$ and $n \equiv 1 \ (\mathrm{mod} \ 4)$, then let $f(u_1)=2$, $f(u_2)=1$, $f(u_3)=3$ and color the remaining vertices of $P_n$ one after another (starting with $u_4$) by repeatedly applying the color pattern $[1,2,1,3]$. Note that the above construction yields a packing coloring of $P_n$ for any $n \geq 3$. Furthermore, for each $i \in [3]$, let $A_i=\{u_j:\,f(u_j)=i\}$. Since $\Pi=\{A_1, A_2, A_3\}$ is a partition of $V(P_n)$ and for each $i \in [3]$, the restriction $f|_{A_i}$ is a dominating broadcast, we conclude that $\chi_{\rho, 3}(P_n) = 3$ for any $n \geq 3$. 
\end{proof}

\begin{proposition}
    \label{prop_chi_rho_4_finite_paths}
For any path $P_n$, where $n \geq 4$, 
$$\chi_{\rho, 4}(P_n) =
\left\{\begin{array}{ll}
5 & \text{if }  n=4\,,\\
4 & \text{if } n \geq 5\,.
\end{array}\right.$$
\end{proposition}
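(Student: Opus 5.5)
The lower bound $\chi_{\rho,4}(P_n)\ge 4$ for all $n\ge 4$ is Lemma~\ref{lem:paths_lower}. So for $n\ge5$ it suffices to construct a packing $4$-domatic coloring with colors in $[4]$. For $n=4$ I will show that $4$ colors are impossible and exhibit a coloring with $5$ colors.

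\textbf{The case $n=4$.} Suppose $f$ uses colors in $[4]$ and $\{A_1,\dots,A_4\}$ is a broadcast domatic partition. Every $A_i$ must be nonempty, and there are only four vertices, so each $A_i$ is a singleton $\{v\}$. Then $v$ alone must $f$-dominate $P_4$, which forces $f(v)\ge \mathrm{ecc}(v)$. This gives $f(u_1),f(u_4)\ge 3$ and $f(u_2),f(u_3)\ge 2$. Since $d(u_1,u_4)=3$, the vertices $u_1$ and $u_4$ cannot both receive color $3$, so $\{f(u_1),f(u_4)\}=\{3,4\}$. Now $u_2$ and $u_3$ are adjacent and need two distinct colors that are $\ge2$. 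Color $2$ is available for only one of them. Color $3$ lies at distance at most $2$ from both $u_2$ and $u_3$, and color $4$ at distance at most $3$, so neither can be reused. This is a contradiction, hence $\chi_{\rho,4}(P_4)\ge5$. For the upper bound, take $f=(3,2,5,4)$ with singleton classes. It is a packing coloring because all colors are distinct, and each vertex's color is at least its eccentricity, so each singleton dominates.

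\textbf{The case $n\ge5$.} The key observation is that the period-$5$ pattern $[1,2,3,1,4]$ works on a long path, taking the color classes themselves as $A_1,\dots,A_4$.
\begin{itemize}
\item Packing: repeats of colors $2$, $3$ and $4$ occur at distance $5$, which exceeds $2$, $3$ and $4$ respectively. Consecutive $1$'s are at distance $3$ or $2$, both exceeding $1$.
\item Domination by color~$2$: a radius-$2$ ball covers $5$ consecutive vertices, one full period.
\item Domination by colors $3$ and $4$: their radii are even larger, so each class covers everything.
\item Domination by color~$1$: within each period, positions $2$ and $3$ are adjacent to the $1$'s at positions $1$ and $4$, and position $5$ is adjacent to position $4$ and to the next period's position $1$.
\end{itemize}
Thus every class dominates every interior vertex.

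The remaining work, which I expect to be the main obstacle, is the ends of the path. For the end $u_n$, I will split according to $n \bmod 5$ and adjust the last few (at most four) colors. This is needed so that each of the classes $2$, $3$, $4$ still reaches $u_n$ and the $1$'s still dominate $u_n$ and $u_{n-1}$. The natural fix is to rotate or reverse the final partial block, or to replace it by a short tailored block such as $[\dots,1,3,2,1,4]$, so that a $3$ sits within distance $3$ and a $4$ within distance $4$ of $u_n$. In doing so I must keep distance $>3$ between $3$'s and $>4$ between $4$'s across the junction. The start $u_1$ is already fine: $u_1$ has color $1$, $u_2$ has color $2$, $u_3$ has color $3$ and $u_5$ has color $4$, all within range. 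For small lengths $5\le n\le 9$, where the left and right end corrections may interact, I will write explicit colorings and check them directly, for instance $(1,2,3,1,4)$ for $n=5$. This yields $\chi_{\rho,4}(P_n)\le4$, and hence equality, for all $n\ge5$.
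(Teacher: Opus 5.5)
Your treatment of $n=4$ is correct, and more explicit than the paper's ``it is clear''. Your analysis of the pattern $[1,2,3,1,4]$ in the interior and at the left end is also correct. The gap is the upper bound $\chi_{\rho,4}(P_n)\le 4$ for $n\ge 5$. Its whole content is the right-end correction, which you leave as a plan, and the concrete pieces you do commit to are wrong.

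\textbf{The $P_5$ example fails.} Your coloring $(1,2,3,1,4)$ of $P_5$ is not a packing $4$-domatic coloring for any partition. Since $d(u_2,u_5)=3>2$ and $d(u_1,u_5)=4>1$, the vertex $u_5$ is $f$-dominated only by $u_3,u_4,u_5$.

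\textbf{Recoloring only the last four vertices cannot work.} Take $n\equiv 1\pmod 5$, $n\ge 11$, with $u_1,\ldots,u_{n-4}$ following your pattern. Then $f(u_{n-4})=2$ and $f(u_{n-6})=4$. Because $u_{n-4}$ does not $f$-dominate $u_n$, while $u_n$ needs four distinct $f$-dominators among $u_{n-4},\ldots,u_n$, each of $u_{n-3},\ldots,u_n$ must $f$-dominate $u_n$. This forces $f(u_{n-3})=3$ and leaves no admissible color $\ge 2$ for $u_{n-2}$: color $2$ clashes with $u_{n-4}$, color $3$ with $u_{n-3}$, and color $4$ with $u_{n-6}$. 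For $n\equiv 2\pmod 5$, $n\ge 12$, the same count requires $f(u_{n-3})\ge 3$, which is blocked by $f(u_{n-4})=3$ and $f(u_{n-7})=4$. These obstructions hold for every partition, not only for the color classes.

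\textbf{Your checklist misses junction failures.} You test only $u_n$ and $u_{n-1}$. For $n\equiv 0\pmod 5$, replacing the last block by your suggested $1,3,2,1,4$ puts $u_{n-5}$ at distance $3$ from both nearest vertices of color $2$, namely $u_{n-8}$ and $u_{n-2}$. So the color-$2$ class does not dominate it.

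What is missing is a complete, verified list of colorings, one per residue of $n$ modulo $5$, together with the short paths. The paper supplies exactly this. It uses the mirror pattern $[1,3,2,1,4]$, which already works at both ends when $5\mid n$. For the other residues it prefixes the explicit blocks $2,1,3,4,1,2$; $1,2,1,3,4,1,2$; $3,1,2,1,4,3,1,2$; and $3,1,2,4$, taking the color classes as the partition. Your pattern can be repaired too, but some tails must reach further back. For instance, for $n\equiv 1\pmod 5$, $n\ge 11$, one can end with $\ldots,4,2,1,3,2,1,4$, recoloring from $u_{n-5}$ on. The short paths then need their own colorings. Until these are written down and checked, the case $n\ge 5$ is not proved.
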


\begin{proof}
    First, consider the case when $n=4$. By Lemma~\ref{lem:paths_lower}, $\chi_{\rho, 4}(P_4) \geq 4$. It is clear that colors in $[4]$ do not suffice for a packing $4$-domatic coloring of $P_4$, thus $\chi_{\rho, 4}(P_4) > 4$. By setting $f(u_1)=5$, $f(u_2)=2$, $f(u_3)=3$ and $f(u_4)=4$, we obtain a packing $4$-domatic coloring $f$ of $P_4$ using colors in $[5]$, thus $\chi_{\rho, 4}(P_4) = 5$.

    Next, let $n \geq 5$. By Lemma~\ref{lem:paths_lower}, $\chi_{\rho, 4}(P_n) \geq 4$. In order to prove that $\chi_{\rho, 4}(P_n) \leq 4$, we form a packing $4$-domatic coloring $f$ of $P_n$ using $4$ colors. To present this coloring, we distinguish five cases with respect to $n$. \\
    \textbf{Case 1. $n \equiv 0 \ (\mathrm{mod}\ 5)$.} \\
    In this case, color the consecutive vertices of $P_n$ one after another by repeating the color pattern $[1, 3, 2, 1, 4]$. \\
    \textbf{Case 2. $n \equiv 1 \ (\mathrm{mod} \ 5)$.} \\
    First, let $f$ assign the colors $2, 1,3,4,1,2$ to the vertices $u_1, u_2, \ldots$ and $u_6$, respectively. Next, if $n>6$, apply the color pattern $[1, 3, 2, 1, 4]$ to color the remaining consecutive vertices of $P_n$. \\
    \textbf{Case 3. $n \equiv 2 \ (\mathrm{mod} \ 5)$.} \\
    Let the vertices $u_1, u_2, \ldots, u_7$ receive colors $1, 2, 1,3,4,1,2$, respectively. If $n>7$, color the remaining vertices of $P_n$ one after another by applying the color pattern $[1, 3, 2, 1, 4]$. \\
    \textbf{Case 4. $n \equiv 3 \ (\mathrm{mod} \ 5)$.} \\
    In this case, we first color the vertices $u_1, u_2, \ldots, u_8$ with colors $3,1,2,1,4,3,1,2$, respectively. Then, if necessary, apply the color pattern $[1, 3, 2, 1, 4]$ for coloring the remaining consecutive vertices of $P_n$. \\
    \textbf{Case 5. $n \equiv 4 \ (\mathrm{mod} \ 5)$.} \\
    Let the vertices $u_1,u_2,u_3,u_4$ receive colors $3,1,2,4$, respectively. Next, color the remaining vertices one after another using the color pattern $[1, 3, 2, 1, 4]$. 
 
  It is straightforward to verify that in each of the cases, $f$ is a packing coloring. Next, for each $i \in [4]$, let $A_i=\{u_j:\,f(u_j)=i\}$. Since $\Pi=\{A_1, A_2, A_3, A_4\}$ is a partition of $V(P_n)$ and for each $i \in [4]$, $f|_{A_i}$ is a dominating broadcast, we conclude that $\chi_{\rho, 4}(P_n) = 4$. 
\end{proof}
\begin{proposition}
    \label{prop_chi_rho_5_finite_paths}
For any path $P_n$, where $n \geq 5$, 
$$\chi_{\rho, 5}(P_n) =
\left\{\begin{array}{ll}
6 & \text{if } n \in \{5,6,7\},\\
5 & \text{if } n \geq 8\,.
\end{array}\right.$$
\end{proposition}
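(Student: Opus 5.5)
The lower bound $\chi_{\rho,5}(P_n)\ge 5$ for every $n\ge 5$ follows from Lemma~\ref{lem:paths_lower}. The proof then splits into three tasks: (a) for $n\ge 8$, construct a packing $5$-domatic coloring with colors in $[5]$; (b) for $n\in\{5,6,7\}$, show that colors in $[5]$ do not suffice; (c) for $n\in\{5,6,7\}$, exhibit a packing $5$-domatic coloring with colors in $[6]$.

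For (a) I would imitate Proposition~\ref{prop_chi_rho_4_finite_paths}. First find a periodic pattern in which every vertex is $f$-dominated by each color class. A natural candidate has period $8$, for instance $[1,2,1,3,1,4,1,5]$, or a variant with $2$ occurring twice such as $[1,2,1,3,1,2,1,4,\dots]$ adjusted so that $5$ appears. Taking $A_i$ to be the class of color $i$, one checks that every window around any vertex contains a vertex of color $i$ within distance $i$. Then for each residue of $n$ modulo the period, prepend a short hand-made prefix and append a suffix. The first and last few vertices are the only problem, because near an endpoint the classes of colors $4$ and $5$ must lie close enough. If a fixed prefix cannot be arranged with $A_i$ equal to the color classes, I would allow the $A_i$ to mix colors, placing one color-$1$ vertex into another class near the ends. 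The base cases $n=8,\dots,8+p-1$ would be handled by explicit colorings, verified directly.

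For (b) I would use the endpoint argument of Lemma~\ref{lem:paths_lower}. Since $f\le 5$, each $A_i$ has a vertex in $\{u_1,\dots,u_6\}$ that reaches $u_1$, and a vertex in $\{u_{n-5},\dots,u_n\}$ that reaches $u_n$.
\begin{itemize}
\item For $n=5$, every set contains exactly one vertex, and that vertex must reach both ends. This forces $f(u_j)\ge\max(j-1,5-j)$. Hence $u_1,u_5$ get distinct colors in $\{4,5\}$, while $u_2,u_4$ need colors $\ge 3$ that avoid both $4$ and $5$ (these already occur at distance $<4$). So $u_2,u_4$ both get color $3$, at distance $2$, a contradiction.
\item For $n=6$, at most one set contains two vertices, and all other sets are singletons that must reach both ends. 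This again forces large colors on too many vertices pairwise at small distance.
\item For $n=7$, at most two sets are non-singletons. The case analysis on which vertices are paired (a pair must jointly cover both ends) should again give a packing conflict among the vertices needing colors $\ge 3$.
\end{itemize}

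For (c) I would give explicit colorings with all sets singletons, in the style of the $P_4$ construction. For example, for $P_5$ take $(6,2,3,4,5)$ with the middle vertex colored $2$ and the remaining colors distinct, so that every vertex reaches both ends. Analogous colorings work for $P_6$ and $P_7$, with color $1$ allowed twice in a merged set.

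I expect the main obstacle to be part (a): finding a periodic pattern together with endpoint prefixes for every residue class, since the ends are exactly where covering by colors $4$ and $5$ is tight. The $n=7$ impossibility in (b) is the second most delicate step.
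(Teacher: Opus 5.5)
\textbf{Main gap: part (a) is not proved.} The case $n\ge 8$ is most of the statement, and your proposal gives no construction for it. The concrete candidate $[1,2,1,3,1,4,1,5]$ with $A_i$ the color classes fails. Consecutive vertices of color $2$ (and of color $3$) are $8$ apart, but a class of color $i$ can $f$-dominate the path on its own only if consecutive members are at distance at most $2i+1$, i.e.\ $5$ resp.\ $7$.

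More fundamentally, taking the $A_i$ to be the color classes cannot work for small $n$. For $n=8$, the color-$1$ class must be an ordinary dominating set of $P_8$, so it has at least $3$ vertices. The classes of colors $2$ and $3$ need at least two vertices each, since one vertex covers only $5$ resp.\ $7$ vertices. Colors $4$ and $5$ need at least one vertex each. That forces at least $9>8$ vertices.

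Your fallback also fails: move one color-$1$ vertex into another class and leave the remaining color-$1$ vertices as a set of their own. A set consisting only of color-$1$ vertices must be a dominating set. So the moved color-$1$ vertex has a neighbour in it, which violates the packing condition. Hence the color-$1$ class can be split only if every part also contains vertices of other colors.

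This is exactly what the paper does. It uses $C=[2,1,3,5,4,2,3,1]$ and takes the classes of colors $2,4,5$ as three of the sets. The color-$1$ and color-$3$ vertices are combined into $A_4=\{u_i: i\equiv 2,7\pmod 8\}$ and $A_5=\{u_i: i\equiv 0,3\pmod 8\}$. The paper then gives explicit tails and partitions in seven cases according to $n \bmod 8$, which also cover $n=8,\ldots,15$. Without such explicit colorings and partitions, the upper bound $\chi_{\rho,5}(P_n)\le 5$ for $n\ge 8$ remains unproved.

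\textbf{Smaller points.} Your lower bounds for $n=5$ and $n=6$ are fine and match the paper. For $n=7$ you only announce a case analysis, and the contradiction you predict is not the one that arises. In the paper, the three singletons must carry colors $3,4,5$, distinct because all of them reach $u_1$. So the color-$3$ singleton is $\{u_4\}$, and no other vertex can use $3$, $4$ or $5$. The remaining four vertices, colored from $\{1,2\}$, form the other two sets. The contradiction is a \emph{domination} failure of one of these low-colored sets, found after a short case analysis on the position of the color-$5$ singleton; it is not a packing conflict among colors $\ge 3$.

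Your $P_5$ example $(6,2,3,4,5)$ is wrong: all sets are singletons, and $u_2$ has color $2$ but is at distance $3$ from $u_5$. Your verbal description, with color $2$ in the middle, gives e.g.\ $(6,3,2,4,5)$, which works.

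The $P_6$ and $P_7$ constructions still have to be written down. The paper uses $(1,4,3,2,5,6)$ with $\{u_1,u_4\}$ merged, and $(1,2,4,3,5,2,6)$ with $\{u_1,u_2,u_6\}$ merged.
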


\begin{proof} 
First, consider the case when $n=5$.  It is clear that colors in $[5]$ do not suffice for a packing $5$-domatic coloring of $P_5$, thus $\chi_{\rho, 5}(P_5) > 5$.  
By setting $f(u_1)=5$, $f(u_2)=4$, $f(u_3)=2$, $f(u_4)=3$ and $f(u_5)=6$, we form a packing $5$-domatic coloring $f$ of $P_5$ using colors from $[6]$. Therefore, $\chi_{\rho, 5}(P_5) = 6$.

Now, let $n=6$. By Lemma~\ref{lem:paths_lower}, we have $\chi_{\rho,5}(P_6)\geq5$. Suppose that $\chi_{\rho,5}(P_6)=5$ and denote by $f$ a packing $5$-domatic coloring of $P_6$ which uses colors from $[5]$. Further, let $\Pi=\{A_1,A_2,A_3,A_4,A_5\}$ be a partition of $V(P_6)$ such that $f|_{A_i}$ is a dominating broadcast for every $i\in[5]$. Since $|V(P_6)|=6$, at least four sets in $\Pi$ contain only one element. Hence, at least four vertices of $P_6$ individually $f$-dominate all vertices of $P_6$. Since the radius of $P_6$ is $3$, each of these four vertices receives a color from $\{3,4,5\}$ by $f$. Therefore, two of these vertices receive the same color from $j \in \{3,4,5\}$. Moreover, both of them $f$-dominate $u_1$, which means that they are at distance at most $j$, a contradiction to the fact that $f$ is a packing coloring. Thus, $\chi_{\rho,5}(P_6)\geq6$. \\
Now, let $f(u_1)=1$, $f(u_2)=4$, $f(u_3)=3$, $f(u_4)=2$, $f(u_5)=5$, and $f(u_6)=6$. Furthermore, let $A_1=\{u_1,u_4\}$, $A_2=\{u_2\}$, $A_3=\{u_3\}$, $A_4=\{u_5\}$, and $A_5=\{u_6\}$. It is straightforward to verify that $f$ is a packing coloring and that $f|_{A_i}$ is a dominating broadcast of $P_6$ for every $i\in[5]$. Therefore, $\chi_{\rho,5}(P_6)\leq6$, and consequently $\chi_{\rho,5}(P_6)=6$.

Next, consider the case when $n=7$. By Lemma~\ref{lem:paths_lower}, we have $\chi_{\rho,5}(P_7)\geq5$. Suppose that there exists a packing $5$-domatic coloring $f$ of $P_7$ using colors from $[5]$. Let $\Pi=\{A_1,A_2,A_3,A_4,A_5\}$ be a partition of $V(P_7)$ such that $f|_{A_i}$ is a dominating broadcast for every $i\in[5]$. We observe that there are at least three sets in $\Pi$ such that each contains exactly one vertex of $P_7$, since $|V(P_7)|=7$. This means that $V(P_7)$ contains at least three vertices such that each of them $f$-dominates all vertices of $P_7$.
Since the radius of $P_7$ is $3$, each of these vertices must receive a color from $\{3,4,5\}$ by $f$. Note that all of these vertices $f$-dominate $u_1$, hence they receive pairwise distinct colors from $\{3,4,5\}$, which also means that we have exactly three sets in $\Pi$ such that each of them contains only one element. Therefore, we can assume that $|A_1|=1$ and the vertex in $A_1$ is colored with $3$ by $f$, $|A_2|=1$ and the vertex in $A_2$ is colored with $4$ by $f$, $|A_3|=1$ and the vertex in $A_3$ is colored with $5$ by $f$, and $|A_4|\geq 2$, $|A_5|\geq 2$. These assumptions imply that $A_1=\{u_4\}$, $A_2 \subseteq \{u_3, u_5\}$, $A_3 \subseteq \{u_2, u_3, u_5, u_6\}$. Additionally, since $f$ is a packing coloring, the vertices in $A_1 \cup A_2 \cup A_3$ are the only vertices of $P_7$ receiving colors from $\{3,4,5\}$. Consequently, the remaining four vertices from $V(P_7) \setminus (A_1 \cup A_2 \cup A_3)$ receive colors from $\{1,2\}$ and belong to $A_4 \cup A_5$. \\
By symmetry, we may assume that $A_2=\{u_3\}$, which implies that $A_3 \subseteq \{u_2, u_5, u_6\}$. If $A_3=\{u_2\}$, then the vertices $u_1,u_5,u_6,u_7$ receive colors from $\{1,2\}$ and at least two of them $f$-dominate $u_1$, which is not possible. Hence, $A_3 \neq \{u_2\}$. Now, suppose that $A_3=\{u_5\}$, which means that the vertices $u_1,u_2,u_6,u_7$ receive colors from $\{1,2\}$ and $A_4 \cup A_5 =\{u_1,u_2,u_6,u_7\}$. Since $A_4$ and $A_5$ must $f$-dominate $u_4$, we derive that $f(u_2)=f(u_6)=2$ and we can assume that $u_2 \in A_4$ and $u_6 \in A_5$. Consequently $u_1 \in A_5$ and $f(u_1)=1$, which implies that $A_5$ does not $f$-dominate $u_3$, a contradiction. Hence, $A_3=\{u_6\}$. Then neither $u_1$ nor $u_2$ $f$-dominates $u_7$, thus we can assume that $u_5 \in A_4$ and $u_7 \in A_5$, which also implies that $f(u_5)=2$ and $f(u_7)=1$. But then no vertex in $A_5$ $f$-dominates $u_5$, again a contradiction. Thus, $\chi_{\rho,5}(P_7)\geq6$. 

For the reverse inequality, define $f$ by setting $f(u_1)=1$, $f(u_2)=2$, $f(u_3)=4$, $f(u_4)=3$, $f(u_5)=5$, $f(u_6)=2$, and $f(u_7)=6$. Furthermore, let $A_1=\{u_1,u_2,u_6\}$, $A_2=\{u_3\}$, $A_3=\{u_4\}$, $A_4=\{u_5\}$, and $A_5=\{u_7\}$. It is straightforward to verify that $f$ is a packing coloring and that $f|_{A_i}$ is a dominating broadcast of $P_7$ for every $i\in[5]$. Therefore, $\chi_{\rho,5}(P_7)\leq6$, and consequently $\chi_{\rho,5}(P_7)=6$.

Finally, let $n\geq8$. By Lemma~\ref{lem:paths_lower}, we have $\chi_{\rho,5}(P_n)\geq5$. We prove that $\chi_{\rho,5}(P_n)\leq5$ by constructing a packing $5$-domatic coloring $f$ of $P_n$ using colors from $[5]$. 
Let $n=8\ell+k$, where $\ell \in \mathbb{N}$ and $k \in [7]_0$. We distinguish seven cases according to the value of $k$. The color pattern $C=[2,1,3,5,4,2,3,1]$ will be used in each of the mentioned cases.\\ 
\textbf{Case 1.} $k \in \{0,1\}$. \\
Color the consecutive vertices of $P_n$ by repeating the pattern $C$ (and if $k=1$, color vertex $u_n$ with color $2$). Define
$A_1=\{u_i\colon i\equiv1,6\pmod 8\}$,
$A_2=\{u_i\colon i\equiv5\pmod 8\}$,
$A_3=\{u_i\colon i\equiv4\pmod 8\}$,
$A_4=\{u_i\colon i\equiv2,7\pmod 8\}$, and
$A_5=\{u_i\colon i\equiv0,3\pmod 8\}$.\\
\textbf{Case 2.} $k=2$. \\
Note that $n=8\ell+2$, $\ell \in \mathbb{N}$. First, if $\ell\geq2$, color the vertices $u_1,u_2,\ldots,u_{8(\ell-1)}$ by repeating the pattern $C$. Then, color (the remaining) ten vertices, namely $u_{8(\ell-1)+1}, \ldots u_n$, by colors $2,1,3,4,2,5,3,2,4,1$, respectively. Further, let
$A_1=\{u_i\colon i\equiv1,6\pmod 8  ~\operatorname{and}~  i \leq 8(\ell-1) \} \cup \{u_{8\ell-6}, u_{8\ell-1}\}$,
$A_2=\{u_i\colon i\equiv5\pmod 8  ~\operatorname{and}~  i \leq 8(\ell-1)\} \cup \{u_{8\ell-4}, u_{8\ell+2}\}$,
$A_3=\{u_i\colon i\equiv4\pmod 8  ~\operatorname{and}~  i \leq 8(\ell-1)\} \cup \{u_{8\ell-2}\}$,
$A_4=\{u_i\colon i\equiv2,7\pmod 8  ~\operatorname{and}~  i \leq 8(\ell-1)\} \cup \{u_{8\ell-5}, u_{8\ell+1}\}$,
$A_5=\{u_i\colon i\equiv0,3\pmod 8  ~\operatorname{and}~ i \leq 8(\ell-1)\} \cup \{u_{8\ell-7}, u_{8\ell-3}, u_{8\ell}\}$.\\ 
\textbf{Case 3.} $k=3$. \\
First, if $\ell\geq2$, let the vertices $u_1, u_2, \ldots, u_{8(\ell-1)}$ receive colors according the pattern $C$. Next, color (the remaining) eleven vertices, $u_{8(\ell-1)+1}, \ldots u_n$, by colors $2,1,4,3,1,5,1,4,2,1,3$, respectively. Finally, define 
$A_1=\{u_i\colon i\equiv1,6\pmod 8  ~\operatorname{and}~  i \leq 8(\ell-1) \} \cup \{u_{8\ell-7}, u_{8\ell-3}, u_{8\ell-1}, u_{8\ell+2}\}$,
$A_2=\{u_i\colon i\equiv5\pmod 8  ~\operatorname{and}~  i \leq 8(\ell-1)\} \cup \{u_{8\ell-2}\}$,
$A_3=\{u_i\colon i\equiv4\pmod 8  ~\operatorname{and}~  i \leq 8(\ell-1)\} \cup \{u_{8\ell-4}, u_{8\ell+3}\}$,
$A_4=\{u_i\colon i\equiv2,7\pmod 8  ~\operatorname{and}~  i \leq 8(\ell-1)\} \cup \{u_{8\ell-6}, u_{8\ell}\}$,
$A_5=\{u_i\colon i\equiv0,3\pmod 8  ~\operatorname{and}~  i \leq 8(\ell-1)\} \cup \{u_{8\ell-5}, u_{8\ell+1}\}$.\\ 
\textbf{Case 4.} $k=4$. \\
If $\ell \geq 2$, color the vertices $u_1, u_2, \ldots, u_{8(\ell-1)}$ by repeating the pattern $C$. Next, color (the remaining) twelve vertices by colors $2,1,4,3,1,5,2,4,3,2,1,5$, respectively. Define 
$A_1=\{u_i\colon i\equiv1,6\pmod 8  ~\operatorname{and}~  i \\
\leq 8(\ell-1) \} \cup \{u_{8\ell-5}, u_{8\ell+2}\}$,
$A_2=\{u_i\colon i\equiv5\pmod 8  ~\operatorname{and}~  i \leq 8(\ell-1)\} \cup \{u_{8\ell-7}, u_{8\ell}\}$,
$A_3=\{u_i\colon i\equiv4\pmod 8  ~\operatorname{and}~  i \leq 8(\ell-1)\} \cup \{u_{8\ell-6}, u_{8\ell-3}, u_{8\ell-1}, u_{8\ell+4}\}$,
$A_4=\{u_i\colon i\equiv2,7\pmod 8  ~\operatorname{and}~ i \leq 8(\ell-1)\} \cup \{u_{8\ell-4}, u_{8\ell+1}\}$,
$A_5=\{u_i\colon i\equiv0,3\pmod 8  ~\operatorname{and}~  i \leq 8(\ell-1)\} \cup \{u_{8\ell-2}, u_{8\ell+3}\}$.\\ 
\textbf{Case 5.} $k=5$. \\
Note that in this case, $n=8\ell+5$, $\ell \in \mathbb{N}$. First, if $\ell \geq 2$, color the first $8(\ell-1)$ vertices of $P_n$ by repeating the pattern $C$. Next, color (the remaining) thirteen vertices $u_{8\ell-7},\ldots,u_{8\ell+5}$ by colors
$2,1,5,3,4,2,1,3,5,4,2,1,3$, respectively.
Define
$A_1=\{u_i:~i\equiv1,6\pmod 8  ~\operatorname{and}~  i\leq8(\ell-1)\}\cup\{u_{8\ell-5},u_{8\ell+3}\}$,
$A_2=\{u_i:~i\equiv5\pmod 8  ~\operatorname{and}~  i\leq8(\ell-1)\}\cup\{u_{8\ell-7},u_{8\ell+1}\}$,
$A_3=\{u_i:~i\equiv4\pmod 8  ~\operatorname{and}~  i\leq8(\ell-1)\}\cup\{u_{8\ell-3},u_{8\ell+5}\}$,
$A_4=\{u_i:~i\equiv2,7\pmod 8  ~\operatorname{and}~  i\leq8(\ell-1)\}\cup\{u_{8\ell-4},u_{8\ell},u_{8\ell+4}\}$, and
$A_5=\{u_i:~i\equiv0,3\pmod 8 ~\operatorname{and}~ i\leq8(\ell-1)\}\cup\{u_{8\ell-6},u_{8\ell-2},u_{8\ell-1},u_{8\ell+2}\}$.\\
\textbf{Case 6.} $k=6$. \\
In this case, first, color the first $8\ell$ vertices of $P_n$ by repeating the pattern $C$. Next, color the remaining six vertices $u_{8\ell+1},\ldots,u_{8\ell+6}$ by colors $2,4,5,2,1,3$, respectively. Define
$A_1=\{u_i:~i\equiv1,6\pmod 8 ~\operatorname{and}~  i\leq8\ell\}\cup\{u_{8\ell+2}\}$,
$A_2=\{u_i:~i\equiv5\pmod 8 ~\operatorname{and}~  i\leq8\ell\}\cup\{u_{8\ell+1},u_{8\ell+5}\}$,
$A_3=\{u_i:~i\equiv4\pmod 8 ~\operatorname{and}~  i\leq8\ell\}\cup\{u_{8\ell+4}\}$,
$A_4=\{u_i:~i\equiv2,7\pmod 8 ~\operatorname{and}~  i\leq8\ell\}\cup\{u_{8\ell+6}\}$, and
$A_5=\{u_i:~i\equiv0,3\pmod 8 ~\operatorname{and}~  i\leq8\ell\}\cup\{u_{8\ell+3}\}$.\\
\textbf{Case 7.} $k=7$. \\
Now $n=8\ell+7$, $\ell \in \mathbb{N}$. First, color the first $8\ell$ vertices of $P_n$ by repeating the pattern $C$. Next, color the remaining seven vertices $u_{8\ell+1},\ldots,u_{8\ell+7}$ by colors $2,5,4,1,3,2,1$, respectively. Define
$A_1=\{u_i :~i\equiv1,6\pmod 8 ~\operatorname{and}~  i\leq8\ell\}\cup\{u_{8\ell+1},u_{8\ell+6}\}$,
$A_2=\{u_i:~i\equiv5\pmod 8 ~\operatorname{and}~ i\leq8\ell\}\cup\{u_{8\ell+2}\}$,
$A_3=\{u_i:~i\equiv4\pmod 8 ~\operatorname{and}~  i\leq8\ell\}\cup\{u_{8\ell+5}\}$,
$A_4=\{u_i:~i\equiv2,7\pmod 8 ~\operatorname{and}~  i\leq8\ell\}\cup\{u_{8\ell+4},u_{8\ell+7}\}$, and
$A_5=\{u_i:~i\equiv0,3\pmod 8 ~\operatorname{and}~  i\leq8\ell\}\cup\{u_{8\ell+3}\}$.

In each of the seven cases, it is straightforward to verify that $f$ is a packing coloring and that $f|_{A_i}$ is a dominating broadcast of $P_n$ for every $i\in[5]$. Consequently, $\chi_{\rho,5}(P_n)\leq5$ for every $n\geq8$. Together with the lower bound $\chi_{\rho,5}(P_n)\geq5$, this yields $\chi_{\rho,5}(P_n)=5$ for every $n\geq8$.
\end{proof}

By the above propositions, when $3\le k\le 5$ we have $\chi_{\rho,k}(P_n)=k$ as soon as $n$ is large enough (for instance, when $k=5$, the equality holds as soon as $n\ge 8$). In the next result, we see that this behavior stops when $k$ is bigger, notably at least when $k\ge 12$. 

\begin{theorem}
    \label{theorem_chi_rho_k_12}
    For any $k \geq 12$ and any $n \geq k$, $\chi_{\rho,k}(P_{n})>k$.
\end{theorem}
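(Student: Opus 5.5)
Suppose, for a contradiction, that $f$ is a packing $k$-domatic coloring of $P_n$ with colors in $[k]$, where $k\ge 12$ and $n\ge k$, and let $\Pi=\{A_1,\ldots,A_k\}$ be a corresponding partition. As in the proof of Lemma~\ref{lem:paths_lower}, the plan is to look first at the end vertex $u_1$. Call $u_j$ a \emph{reacher} of $u_1$ if $f(u_j)\ge j-1$. Since all colors are at most $k$, every reacher lies in $\{u_1,\ldots,u_{k+1}\}$, and every $A_i$ contains at least one reacher. Two reachers with the same color $c$ would both lie in $\{u_1,\ldots,u_{c+1}\}$, hence at distance at most $c$, which is forbidden. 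So the reachers of $u_1$ have pairwise distinct colors. Since there are at least $k$ of them and only $k$ colors, there are exactly $k$ reachers, they use every color of $[k]$ exactly once, and each $A_i$ contains exactly one of them. Only one vertex of $\{u_1,\ldots,u_{k+1}\}$ is not a reacher. The condition $f(u_j)\ge j-1$, together with distinctness, should then force an almost rigid structure: $u_{k+1}$ (if it is a reacher) has color $k$, $u_k$ has color $\ge k-1$, and so on. In other words, the color $c$ sits at position roughly $c+1$, up to one shift caused by the single non-reacher. I would first make this precise as a short list of possible colorings of the initial segment. If $n$ is close to $k$, the same applies from the right end $u_n$, and the two rigid structures should be incompatible directly. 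This settles small $n$, say $n\le 2k$, as a separate case.

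Next I would exploit the packing condition. A color $c$ used at a position near $c+1$ cannot be used again before position about $2c+2$. Hence, on the segment immediately to the right of $u_{k+1}$, the colors $c>\frac{k}{2}$ are still blocked for a while, and large colors reappear only sparsely. Now fix a set $A_i$ whose unique left reacher has a color $c$ of middle size, placed near $u_{c+1}$. That reacher $f$-dominates only up to about $u_{2c+1}$. Therefore $A_i$ needs another vertex $w$, with $d(w,x)\le f(w)$, to dominate the vertices $x$ just beyond $u_{2c+1}$. For middle $c$ this $w$ must lie in the window right after the initial block, where the available colors are restricted as above.

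The counting step is to consider a suitable vertex $x$, say around $u_{k+2}$ to $u_{3k/2}$, and to require $k$ distinct sets to $f$-dominate $x$. The large-color vertices of the initial block cover $x$ for some of the sets. I would show that many sets (those with middle-size reacher colors) must each obtain a dominating vertex inside a short window. The number of vertices of a window of length $L$ carrying usable colors is bounded by the packing densities, roughly $\sum_c L/(c+1)$ over the unblocked colors. For $k\ge 12$ this should fall short of the number of sets that need service, giving the contradiction. Equivalently, one can compare the total number of pairs ($x$, set) to be served with what each colored vertex can supply under the packing constraint.

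I expect this final counting to be the main obstacle. The threshold $12$ suggests that a crude density bound does not suffice for smaller $k$, so the window and the vertex $x$ must be chosen carefully. The non-reacher shift and the vertices that serve several sets simultaneously must also be tracked. I would first try taking $x=u_{k+2}$ and the window $\{u_{k+2},\ldots,u_{2k+2}\}$, listing which colors can appear there. Only if that fails would I move to a two-vertex argument, using both $u_{k+2}$ and a vertex near $u_{3k/2}$.
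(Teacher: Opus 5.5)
Your opening step is correct and is exactly the paper's. The $f$-dominators of $u_1$ are $k$ vertices $u_{a_1},\dots,u_{a_k}$, one of each color and one in each $A_i$, with $a_c\le c+1$. Exactly one vertex $u_H$ of $\{u_1,\dots,u_{k+1}\}$ is not among them.

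Beyond that, the proposal is a plan rather than a proof, and the structure it relies on is wrong. Downward induction does give $f(u_j)=j-1$ for all $j>H$. But $u_1,\dots,u_{H-1}$ may carry any of the $2^{H-2}$ permutations of $\{1,\dots,H-1\}$ with $f(u_j)\ge j-1$. For example, the colors $4,1,2,3$ on $u_1,\dots,u_4$ are allowed, so a large color can sit at $u_1$. Thus ``color $c$ sits near $u_{c+1}$'' fails exactly for the colors that matter.

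This undermines your window argument: a middle-color reacher need not reach only to about $u_{2c+1}$, and large colors need not be blocked right of $u_{k+1}$. It also undermines the asserted ``direct'' incompatibility of the two end structures for $n\le 2k$, for which you give no argument anyway. The decisive counting at $u_{k+2}$ is left at ``should fall short''. Nothing you say explains why a vertex that far from the end would lack dominators; in the paper the contradiction is obtained at $u_6$ and $u_4$.

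The missing idea is to count dominators of $u_6$ instead of $u_{k+2}$.
\begin{itemize}
\item Since $a_1\le 2$ and $a_2\le 3$, the reachers of colors $1$ and $2$ miss $u_6$. So $u_6$ needs two dominators outside $\{u_{a_1},\dots,u_{a_k}\}$, and only one of them can be $u_H$.
\item Hence some $u_p$ with $p\ge k+2$ dominates $u_6$, forcing $q=f(u_p)\ge p-6\ge k-4\ge 8$.
\item The packing condition against the left copy $u_{a_q}$ gives $a_q<p-q\le 6$. So $u_1,\dots,u_5$ carry exactly the colors $1,2,3,4,q$; the large color $q$ sits near $u_1$, contrary to your heuristic.
\item It follows that $u_p$ is the only dominator of $u_6$ to the right of $u_{k+1}$, hence $u_H$ dominates $u_6$.
\item The forced shift $f(u_{i+1})=i$ for $5\le i\le q-1$ gives $H\ge q+1$.
\item Meanwhile $c=f(u_H)$ must have its left copy in $\{u_1,\dots,u_5\}$ and $c\ne q$, so $c\le 4$ and $H\le 10$.
\end{itemize}
Hence $k-4\le q\le 9$, i.e.\ $k\le 13$, and the cases $k\in\{12,13\}$ are eliminated by counting dominators of $u_4$. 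The cases $n=k$ and $n=k+1$ drop out along the way, via $u_4$ and via the nonexistence of $u_p$ respectively. So no separate treatment of $n\le 2k$ is needed.
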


\begin{proof}
Let $k\geq12$ and $n\geq k$ be integers. By Lemma~\ref{lem:paths_lower}, we have $\chi_{\rho,k}(P_n)\geq k$.
Suppose that $\chi_{\rho,k}(P_n)=k$, and let $f$ be a packing $k$-domatic coloring using colors from $[k]$. Furthermore, let $\Phi=\{A_1,\ldots,A_k\}$ be a partition of $V(P_n)$ such that $f|_{A_j}$ is a dominating broadcast of $P_n$ for every $j\in[k]$.

We first claim that for each $i\in[k]$, at most one vertex of color $i$ can $f$-dominate $u_1$. Suppose to the contrary that two distinct vertices, $u_j$ and $u_{j'}$, of color $i$ both $f$-dominate $u_1$. Then, assuming $j<j'$, we have $d(u_{j'},u_j)\le d(u_{j'},u_1)\le i$, which is a contradiction with $f$ being a packing coloring. 
This proves the claim.

Next, since $f|{A_j}$ is a dominating broadcast for every $j\in[k]$, each set $A_j$ contains at least one vertex that $f$-dominates $u_1$. As the sets in $\Phi$ are pairwise disjoint, at least $k$ distinct vertices $f$-dominate $u_1$. On the other hand, at most one vertex of each of the $k$ available colors can $f$-dominate $u_1$. Consequently, exactly $k$ vertices $f$-dominate $u_1$, exactly one of each color in $[k]$. For each $i\in[k]$, let $u_{a_i}$ denote the unique vertex of color $i$ that $f$-dominates $u_1$. Note that $d(u_1,u_{a_i})=a_i-1\le i$ and hence, $a_i\le i+1$ for any $i \in [k]$. 

Now, if $n=k$, then $\{u_{a_1},\ldots,u_{a_k}\}=V(P_n)$. In particular, $u_{a_1}$ has color $1$ and satisfies $a_1\leq2$. Hence, $d(u_{a_1},u_4)=4-a_1\geq2>1$, so $u_{a_1}$ does not $f$-dominate $u_4$. Consequently, at most $k-1$ vertices $f$-dominate $u_4$. However, each of the $k$ pairwise disjoint sets in $\Phi$ contains a vertex that $f$-dominates $u_4$, so at least $k$ distinct vertices must $f$-dominate $u_4$, a contradiction. We may therefore assume that $n\geq k+1$.

Note that $\{u_{a_1},\ldots,u_{a_k}\} \subseteq \{u_1,\ldots,u_{k+1}\}$, hence that there is exactly one vertex in $\{u_1,\ldots,u_{k+1}\}$ that does not $f$-dominate $u_1$. Denote this vertex by $u_H$. Since $\{u_{a_1},\ldots,u_{a_k}\}$ contains one vertex of each color in $[k]$, there exists some $i\in[k]$ such that $f(u_H)=f(u_{a_i})$.

We now consider the vertex $u_6$. Since $f|{A_j}$ is a dominating broadcast for every $j\in[k]$, each set $A_j$ contains at least one vertex that $f$-dominates $u_6$. As the sets in $\Phi$ are pairwise disjoint, at least $k$ distinct vertices $f$-dominate $u_6$. Since $a_1\leq2$, we have $d(u_{a_1},u_6)=6-a_1\geq4>1$, and hence $u_{a_1}$ does not $f$-dominate $u_6$. Similarly, since $a_2\leq3$, we have $d(u_{a_2},u_6)=6-a_2\geq3>2$, so $u_{a_2}$ does not $f$-dominate $u_6$ either. Consequently, at most $k-2$ vertices in $\{u_{a_1},\ldots,u_{a_k}\}$ can $f$-dominate $u_6$. Therefore, at least two vertices outside this set must $f$-dominate $u_6$.
Among the vertices in $\{u_1,\ldots,u_{k+1}\}$, the only vertex outside $\{u_{a_1},\ldots,u_{a_k}\}$ is $u_H$. Hence, there exists at least one additional vertex $u_p$ with $p\geq k+2$ that $f$-dominates $u_6$. If $n=k+1$, no such vertex exists, a contradiction. We may therefore assume that $n\geq k+2$.

Let $q=f(u_p)$, where $p\geq k+2$ and $u_p$ $f$-dominates $u_6$. Since $u_p$ $f$-dominates $u_6$, we have $p-6=d(u_p,u_6)\leq q$, or equivalently, $p-q\leq6$. 
In addition, since $p\geq k+2$ and $k\geq12$, we obtain 
\begin{equation}
\label{eq:k12}
q\geq p-6\geq k-4\geq8.
\end{equation}
Recall that $p\geq k+2$ and $q\leq k$. Hence, $d(u_p,u_1)=p-1\geq k+1>q$, so $u_p$ does not $f$-dominate $u_1$. On the other hand, the vertex $u_{a_q}\in \{u_1,\ldots,u_{k+1}\}$ has color $q$ and $f$-dominates $u_1$. Since $f(u_p)=f(u_{a_q})=q$ and $p>a_q$, the packing condition implies that $p-a_q=d(u_p,u_{a_q})>q$. Therefore, $a_q<p-q\leq6$, and hence $a_q\leq5$. We infer that exactly one of the vertices from $N_4[u_1]=\{u_1, u_2, u_3, u_4, u_5\}$ receives the color $q \geq 8$ by $f$.
Further, recall that $a_i\leq i+1$ for every $i\in[k]$. In particular, $a_i\leq 5$ for every $i\in\{1,2,3,4\}$. Hence, $\{u_{a_1},u_{a_2},u_{a_3},u_{a_4}, u_{a_q}\}= N_4[u_1]$. In other words, the vertices in $N_4[u_1]$ are colored with pairwise distinct colors from $\{1,2,3,4,q\}$.

Next, we claim that $u_p$ is the unique vertex in $\{u_{k+2},\ldots, u_n\}$ that $f$-dominates $u_6$. Clearly, any vertex in this set that $f$-dominates $u_6$ must be at distance at most $k$ from $u_6$ and therefore belongs to $\{u_{k+2},\ldots, u_{\min\{n,k+6\}}\}$. Suppose, to the contrary, that another vertex $u_r$ in this set $f$-dominates $u_6$, and let $s=f(u_r)$. As above, we obtain $s\geq8$ and $a_s\leq5$. This means that to a vertex in $N_4[u_1]$ is assigned a color $s$ by $f$. Since the colors assigned to the vertices in $N_4[u_1]$ are precisely $1,2,3,4$, and $q$, and we know that $s \geq 8$, it follows that $s=q$. Hence, $f(u_p)=f(u_r)=q$. Further, since both $u_p$ and $u_r$ belong to $\{u_{k+2},\ldots,u_{\min\{n,k+6\}}\}$, $d(u_p,u_r)\leq4<q$, contradicting the fact that $f$ is a packing coloring. Therefore, $u_p$ is the unique vertex in $\{u_{k+2},\ldots, u_n\}$ that $f$-dominates $u_6$. Note that this implies that $u_H$ must $f$-dominate $u_6$, since at least two vertices outside $\{u_{a_1},\ldots,u_{a_k}\}$ $f$-dominate $u_6$. 


 Now, we consider the position of $u_H$. Recall that the vertices in $N_4[u_1]$ are colored with pairwise distinct colors from $\{1,2,3,4,q\}$ and each of them $f$-dominates $u_1$. Further, since $a_5\leq6$, we have $a_5=6$ which means that $f(u_6)=5$. Similarly, $a_6\leq7$ implies $f(u_7)=6$. Continuing with this argument, we obtain $f(u_{i+1})=i$ for every $i\in\{5,\ldots,q-1\}$. Note that for each $i \in [q]$, $u_{i}$ $f$-dominates $u_1$. Since $u_H$ does not dominates $u_1$, we know that $H \geq q+1$.

Let $c=f(u_H)$. Since $u_H$ $f$-dominates $u_6$, we have $H-6\leq c$.
Furthermore, there is a unique vertex $u_{a_c}$ of color $c$ that $f$-dominates $u_1$. Since $u_H$ does not $f$-dominate $u_1$ it is clear that $a_c < H$.
The packing condition applied to $u_H$ and $u_{a_c}$ now gives $H-a_c>c$. 
Combining this inequality with $H-6\leq c$, we obtain $H-a_c>H-6$, and therefore $a_c\leq5$. 
Since $\{f(u_1),\ldots,f(u_5)\}=\{1,2,3,4,q\}$, we conclude that $c\in\{1,2,3,4,q\}$.
Now, we show that $c\neq q$. Recall that $H\geq q+1$ and $p\leq q+6$. Moreover, $u_H \in \{u_1,\ldots,u_{k+1}\}$, whereas $u_p \in \{u_{k+2},\ldots,u_{\min\{n,k+6\}}\}$, so $H<p$. It follows that $d(u_H,u_p)=p-H\leq(q+6)-(q+1)=5$. If $c=q$, then $u_H$ and $u_p$ are distinct vertices of color $q$ satisfying $d(u_H,u_p)\leq5$, contradicting the packing condition since $q\geq8$. Therefore, $c\neq q$, and consequently $c \leq 4$. 

We now show that $k\in\{12,13\}$. Since $u_H$ $f$-dominates $u_6$, we have $H-6\leq c=f(u_H)$. Hence, $H\leq c+6\leq10$, since $c\leq4$. Combining this with $H\geq q+1$, we obtain $q+1\leq H\leq10$, and therefore $q\leq9$. Since we have already shown that $q\geq k-4$ (see~\eqref{eq:k12}), it follows that $k-4\leq q\leq9$, and hence $k\leq13$. Together with $k\geq12$, this yields $k\in \{12,13\}$. In continuation of the proof we distinguish two cases. 

First, suppose that $k=13$. Then $q\geq k-4=9$, and since $q\leq9$, we obtain $q=9$. The inequalities $q+1\leq H\leq10$ then imply that $H=10$. Moreover, $H-6\leq c\leq4$ yields $c=4$. Finally, $p\geq k+2=15$ and $p\leq q+6=15$, so $p=15$. Thus, when $k=13$, we necessarily have $p=15$, $q=9$, $H=10$, and $c=4$. In other words, $f(u_{10})=4$ and $f(u_{15})=9$.
We now consider the vertex $u_4$. The vertex $u_{a_1}$ of color $1$ does not $f$-dominate $u_4$, since $a_1\leq2$ and hence $d(u_{a_1},u_4)=4-a_1\geq2>1$. Moreover, neither $u_H=u_{10}$ nor $u_p=u_{15}$ $f$-dominates $u_4$, because $d(u_{10},u_4)=6>4=f(u_{10})$ and $d(u_{15},u_4)=11>9=f(u_{15})$.
No other vertex in $V(P_n)\setminus \{u_{a_1},\ldots,u_{a_{13}}\}$ can $f$-dominate $u_4$. Indeed, every such vertex different from $u_H$ belongs to $\{u_{15},u_{16},\ldots,u_n\}$. If such a vertex $f$-dominated $u_4$, then it would also $f$-dominate the closer vertex $u_6$. However, $u_{15}$ is the unique vertex in this set that $f$-dominates $u_6$, and we have already shown that it does not $f$-dominate $u_4$.
Therefore, only vertices in $\{u_{a_2},\ldots,u_{a_{13}}\}$ can $f$-dominate $u_4$, so at most $12$ vertices $f$-dominate $u_4$. On the other hand, each of the $13$ pairwise disjoint sets in $\Phi$ contains a vertex that $f$-dominates $u_4$. Thus, at least $13$ distinct vertices must $f$-dominate $u_4$, a contradiction.

Now suppose that $k=12$. Recall that $q+1\leq H\leq10$ and $8\leq q\leq9$. If $q=8$, then $H\in \{9,10\}$, whereas if $q=9$, then $H=10$. Moreover, since $H-6\leq c\leq4$, $H=9$ implies that $c\in \{3,4\}$, while $H=10$ implies that $c=4$.
We also have $p\geq k+2=14$ and $p\leq q+6$. If $q=8$, then $p=14$, while if $q=9$, then $p\in \{14,15\}$. Hence, the possible pairs $(p,q)$ are $(14,8)$, $(14,9)$, and $(15,9)$, while the possible pairs $(H,c)$ are $(9,3)$, $(9,4)$, and $(10,4)$.
In each case, $u_H$ does not $f$-dominate $u_4$, since $d(u_H,u_4)=H-4\geq5>c$. The vertex $u_p$ does not $f$-dominate $u_4$ either. Indeed, if $(p,q)=(14,8)$, then $d(u_p,u_4)=10>8=q$; if $(p,q)=(14,9)$, then $d(u_p,u_4)=10>9=q$; and if $(p,q)=(15,9)$, then $d(u_p,u_4)=11>9=q$.
Furthermore, no other vertex in $V(P_n)\setminus \{u_{a_1},\ldots,u_{a_{12}}\}$ can $f$-dominate $u_4$. Indeed, every such vertex different from $u_H$ lies to the right of $u_{13}$. If it $f$-dominated $u_4$, then it would also $f$-dominate the closer vertex $u_6$. By the uniqueness of $u_p$, this vertex would have to be $u_p$, but we have already shown that $u_p$ does not $f$-dominate $u_4$.
Finally, the vertex $u_{a_1}$ of color $1$ does not $f$-dominate $u_4$, since $a_1\leq2$ and therefore $d(u_{a_1},u_4)=4-a_1\geq2>1$. Hence, at most $11$ vertices $f$-dominate $u_4$, which is a contradiction since each of the $12$ pairwise disjoint sets in $\Phi$ must contain a vertex that $f$-dominates $u_4$.
\end{proof}


\subsection{The infinite path}

Note that Lemma~\ref{lem:paths_lower} does not apply to the two-way infinite path $P_\infty$, thus $\chi_{\rho,k}(P_\infty)$ could be smaller than $k$. In fact, this happens for all $k\in\{4,5,6\}$ as the following result shows.

\begin{theorem}
    \label{infinite_paths}
For the path $P_\infty$, 
$$\chi_{\rho, k}(P_\infty) =
\left\{\begin{array}{ll}
3 & \text{if } k\in \{3,4\}\,,\\
4 & \text{if } k=5\,.
\end{array}\right.$$
In addition, $\chi_{\rho, 6}(P_\infty) \in \{4, 5\}$. 
\end{theorem}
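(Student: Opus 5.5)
The lower bounds will be handled separately from the upper bounds. Since $\chi_\rho(P_\infty)=3$, every $\chi_{\rho,k}(P_\infty)\ge 3$. For $\chi_{\rho,5}(P_\infty)\ge 4$ I will show that no packing coloring with colors in $[3]$ is a $5$-dominating broadcast. This also gives $\chi_{\rho,6}(P_\infty)\ge 4$ by Observation~\ref{obs:monotonicity}: a $6$-domatic coloring with colors in $[3]$ would, after merging two classes, give a $5$-domatic one.

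For the upper bounds I will exhibit periodic colorings of $P_\infty=\ldots,u_{-1},u_0,u_1,\ldots$ together with periodic partitions. Verification then reduces to checking one period, and this is routine.
\begin{itemize}
\item $k\in\{3,4\}$: I will use the pattern $[1,2,1,3]$.
\begin{itemize}
\item For $k=3$, taking the color classes $A_i=f^{-1}(i)$ already works. The color-$1$ vertices dominate everything, the color-$2$ vertices (period $4$, radius $2$) dominate everything, and so do the color-$3$ vertices.
\item For $k=4$, I will refine this partition by splitting classes according to positions modulo $8$ or a larger multiple of $4$. For instance, the color-$1$ vertices together with parts of the other classes can be mixed. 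I intend to find a valid period-$8$ or period-$12$ partition by a short hand or computer search.
\end{itemize}
\item $k=5$: I will look for a period of moderate length (say $8$–$12$) colored with $[4]$, similar in spirit to the pattern $C=[2,1,3,5,4,2,3,1]$ used for finite paths.
\item $k=6$: I will do the same with colors in $[5]$.
\end{itemize}
The guide for choosing these patterns is the density count below, which shows how much slack is available.

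The key step is the lower bound for $k=5$ with colors in $[3]$. Suppose $f$ and $A_1,\ldots,A_5$ exist. Every vertex $v$ must be $f$-dominated by at least five distinct vertices, one in each $A_j$.
\begin{itemize}
\item Color-$1$ vertices within distance $1$ of $v$ are at most $2$, and at most $1$ if $f(v)\neq 1$ fails, i.e.\ if $f(v)=1$ then only $v$ itself.
\item Color-$2$ vertices within distance $2$ are at most $2$.
\item Color-$3$ vertices within distance $3$ are at most $2$.
\end{itemize}
So each vertex has at most $6$ potential dominators. A pure averaging argument does not suffice. With densities $d_i\le 1/(i+1)$, the average number of dominators $3d_1+5d_2+7d_3$ can reach $61/12>5$.

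I will therefore argue locally. Take $v$ with $f(v)=1$; such vertices exist, since otherwise colors $2,3$ alone cannot cover with density $\le 1/3+1/4$. Then $v$ has at most $1+2+2=5$ dominators, so all bounds are tight. Both vertices at distance $\le 2$ of color $2$ and two vertices at distance $\le 3$ of color $3$ are forced, which pins down the coloring in the window $u_{-3},\ldots,u_3$ around $v$ up to a few cases. Then I will look at a neighbor $w$ of $v$, which has color $2$ or $3$. Its dominators are counted the same way: its color-$1$ dominators are among $v$ and the other neighbor, and the forced color-$2$ and color-$3$ vertices in the window determine exactly which of them reach $w$. In each case I expect to find a vertex with at most $4$ dominators, a contradiction.

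This case analysis is the main obstacle. If it becomes unwieldy, I will instead consider maximal runs between consecutive color-$3$ vertices. Their lengths are at least $4$, and those of length $\ge 8$ leave vertices at distance $>3$ from every color-$3$ vertex. I will count dominators of the middle vertices of each run type.
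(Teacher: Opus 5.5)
There is a genuine gap in the upper-bound half. For $k=4,5,6$ you only announce a search for periodic colorings and partitions, and ``verification is routine'' presupposes exactly the objects you have not produced. These constructions are the entire content of $\chi_{\rho,4}(P_\infty)\le 3$, $\chi_{\rho,5}(P_\infty)\le 4$ and $\chi_{\rho,6}(P_\infty)\le 5$, and their existence does not follow from slack counting.

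For instance, your period-$8$ option for $k=4$ fails. A part with one vertex per period would need color at least $4$, so each of the four parts takes exactly two of the eight vertices of a period. A part with two color-$1$ vertices covers too little, so the four color-$1$ vertices need four distinct partners. Since a period has only two color-$3$ vertices, some part pairs colors $1$ and $2$. Such a part covers $P_\infty$ only if its two vertices are at distance exactly $4$, which is impossible because vertices at distance $4$ in $[1,2,1,3]$ have equal colors. Similarly, for $k=5$ no period-$8$ solution exists with any coloring from $[4]$: a one-vertex part needs color $4$, a period contains at most one such vertex, so at least $1+2\cdot 4=9$ vertices per period would be needed.

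The paper supplies the missing objects:
\begin{itemize}
\item for $k=4$, the pattern $[1,2,1,3]$ with $f(u_0)=2$, the color-$1$ class as one part, and the even-indexed vertices split by index modulo $6$ (consecutive members lie at distance $6$ and alternate between colors $2$ and $3$, so radii $2+3$ close every gap);
\item for $k=5$, the pattern $[4,1,3,2,1]$ with parts of period $20$;
\item for $k=6$, the pattern $[5,1,2,4,1,3]$ with parts of period $30$.
\end{itemize}
Your plan is realizable. For example, for $k=5$ the period-$10$ coloring $[4,3,1,2,1,4,1,3,2,1]$ with parts given by the residue pairs $\{0,4\},\{5,9\},\{1,6\},\{2,7\},\{3,8\}$ modulo $10$ also works. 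But until explicit patterns are written down and checked, the equalities for $k=4,5$ and the upper bound for $k=6$ remain unproved.

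Your lower bounds are sound, and your local argument for $\chi_{\rho,5}(P_\infty)>3$ does close, and quickly. The two color-$2$ vertices within distance $2$ of $v=u_0$ must be $\{u_{-2},u_2\}$, $\{u_{-2},u_1\}$ or $\{u_{-1},u_2\}$. The first forces color $3$ on both $u_{-1}$ and $u_1$. In the second (the third is symmetric), $u_{-1}$ is forced to have color $3$, and its only possible dominators are $u_{-2},u_{-1},u_0,u_1$.

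The paper argues more directly. With three colors, the coloring of $P_\infty$ must be $[1,2,1,3]$, since two consecutive vertices colored $2,3$ leave no color for the vertex two steps beyond the $2$. In that pattern a color-$1$ vertex has exactly four potential dominators.

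Finally, your dismissal of averaging rests on an arithmetic slip: $3\cdot\frac12+5\cdot\frac13+7\cdot\frac14=\frac{59}{12}<5$, even before using $d_1+d_2+d_3=1$. So counting dominator incidences over a long window already yields $\chi_{\rho,5}(P_\infty)>3$.
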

\begin{proof}
    Let $P_\infty$ be the two-way infinite path with $V(P_\infty)=\{\ldots, u_{-3}, u_{-2}, u_{-1}, u_0, u_1, u_2, u_3, \ldots\}$.

    First, let $k=3$. Since $P_4$ is a subgraph of $P_\infty$ and $\chi_\rho(P_4)=3$, it follows that $\chi_{\rho, 3}(P_\infty) \geq 3$. \\
    Next, define a packing coloring $f$ of $P_\infty$ by assigning colors to consecutive vertices of $P_\infty$ according to the repeating pattern $[1,2,1,3]$. Then $\Pi=\{D_1, D_2, D_3\}$, where $D_1=\{u \in V(P_\infty) : f(u)=1\}$, $D_2=\{u \in V(P_\infty) : f(u)=2\}$ and $D_3=\{u \in V(P_\infty) : f(u)=3\}$, is a partition of $V(P_\infty)$ such that for each $i \in [3]$, $f|_{D_i}$ is a dominating broadcast. Hence, $\chi_{\rho, 3}(P_\infty) = 3$.

    Next, consider the case when $k=4$. Using the fact that $\chi_{\rho, 3}(P_\infty) = 3$ and Observation~\ref{obs:monotonicity}, we derive that $\chi_{\rho, 4}(P_\infty) \geq 3$. 
    Applying the periodic pattern $[1, 2, 1, 3]$ to the consecutive vertices of $P_\infty$ such that $f(u_0)=2$, we obtain a packing $4$-domatic coloring $f$ of $P_\infty$. Indeed, $f|_{D_1}$, where $D_1=\{u \in V(P_\infty) :~f(u)=1\}$, is a dominating broadcast. Further, let $D_2=\{u_j :~j \equiv 0 \pmod 6\}$, $D_3=\{u_j;~j \equiv 2 \pmod 6\}$ and $D_4=\{u_j :~j \equiv 4 \pmod 6)\}$. Note that for any $t \in \mathbb{Z}$, $u_{6t}, u_{6(t+1)} \in D_2$, $\{f(u_{6t}), f(u_{6(t+1)}) \}=\{2,3\}$ and $d(u_{6t}, u_{6(t+1)})=6$. Hence, each vertex of $P_\infty$ is $f$-dominated by a vertex of $D_2$, implying that $f|_{D_2}$ is a dominating broadcast. By an analogous argument, $f|_{D_3}$ and $f|_{D_4}$ are also dominating broadcasts. Therefore, $f$ is a packing $4$-domatic coloring of $P_\infty$ and consequently, $\chi_{\rho, 4}(P_\infty)=3$. 

    Now, let $k=5$. Suppose that there exists a packing $5$-domatic coloring $f$ of $P_\infty$ using only $3$ colors. Then $f$ must assign colors to the consecutive vertices of $P_\infty$ according to the periodic pattern $[1,2,1,3]$. Without loss of generality we may assume that $f(u_0)=1$ and $f(u_1)=2$. Consequently, $f(u_{-3})=2$, $f(u_{-2})=1$, $f(u_{-1})=3$, $f(u_2)=1$, and $f(u_3)=3$. We observe that $u_0$ can be $f$-dominated only by the vertices $u_{-1}, u_0, u_1$ and $u_3$. Hence, it is impossible to partition $V(P_\infty)$ into five sets, each inducing a dominating broadcast. Therefore, $\chi_{\rho, 5}(P_\infty) > 3$. 
    
    Applying the periodic pattern $[4,1,3,2,1]$ to the consecutive vertices of $P_\infty$ so that $f(u_0)=4$, we get a packing $5$-domatic coloring $f$ of $P_\infty$. Indeed, let $D_1=\{u \in V(P_\infty) :\, f(u)=1\}$. Then, $f|_{D_1}$ is a dominating broadcast. 
    Next, let $D_2=\{u_j :\,j \equiv 0, 7, 13 \pmod {20}\}$, $D_3=\{u_j :\, j \equiv 2,8,15 \pmod {20}\}$, $D_4=\{u_j:\, j \equiv 3,10,17 \pmod {20}\}$ and $D_5=\{u_j :\, j \equiv 5, 12, 18 \pmod {20}\}$. We observe for any $t \in \mathbb{Z}$, $f(u_{20t})=4$, $f(u_{20t+7})=3$, $f(u_{20t+13})=2$, $f(u_{20t+20})=4$ and $d(u_{20t}, u_{20t+7})=7$, $d(u_{20t+7}, u_{20t+13})=6$ and $d(u_{20t+13}, u_{20t+20})=7$. It follows that $f|_{D_2}$ is a dominating broadcast. By analogous arguments, we derive that $f|_{D_3}$, $f|_{D_4}$ and $f|_{D_5}$ are also dominating broadcasts. Therefore, $f$ is a packing $5$-domatic coloring of $P_\infty$ using $4$ colors, thus, $\chi_{\rho, 5}(P_\infty)=4$. 

    Finally, consider the case when $k=6$. Using Observation~\ref{obs:monotonicity} and the fact that $\chi_{\rho, 5}(P_\infty)=4$, we derive that $\chi_{\rho, 6}(P_\infty) \geq 4$. 
    In order to prove that $\chi_{\rho, 6}(P_\infty) \leq 5$, we form a packing coloring $f$ of $P_\infty$ using $5$ colors by applying the periodic pattern $[5,1,2,4,1,3]$ to the consecutive vertices of $P_\infty$ such that $f(u_0)=5$. We claim that the presented coloring is a packing $6$-domatic coloring of $P_\infty$. Clearly, for $D_1=\{u \in V(P_\infty) : f(u)=1\}$, $f|_{D_1}$ is a dominating broadcast. Further, let $D_2=\{u_j : j \equiv 0,8,15,23 \pmod {30}\}$, $D_3=\{u_j : j \equiv 2,9,17,24 \pmod {30}\}$, $D_4=\{u_j : j \equiv 3,11,18,26 \pmod {30}\}$, $D_5=\{u_j : j \equiv 5,12,20,27 \pmod {30}\}$ and $D_6=\{u_j : j \equiv 6,14,21,29 \pmod {30}\}$. We observe that for any $i \in \{2,3,4,5,6\}$, the consecutive vertices of $D_i$ are colored according to the pattern $[5,2,4,3]$. Moreover, for any $i \in \{2,3,4,5,6\}$, any two consecutive vertices of $D_i$ colored with $5$ and $2$, or with $4$ and $3$, are at distance $8$, while any two consecutive vertices colored with $2$ and $4$, or with $3$ and $5$, are at distance $7$. Hence, $f|_{D_i}$ is a dominating broadcast for any $i \in \{2,3,4,5,6\}$, which implies that $\chi_{\rho, 6}(P_\infty) \leq 5$. Therefore, $\chi_{\rho, 6}(P_\infty) \in \{4, 5\}$.
\end{proof}

Note that the above proposition determines the exact values of $\chi_{\rho,k}(P_\infty)$ for $k\in\{3,4,5\}$. 
For $k=6$, the problem is reduced to determining which of the two possible values in $\{4,5\}$ is attained by $\chi_{\rho,6}(P_\infty)$,  a question we leave for the reader. The cases $k\geq 7$ remain open and are left for future investigation.

Lower bounds on $\chi_{\rho, k}(P_\infty)$ seem to be more challenging. We are able to obtain the following lower bound.
\begin{proposition}
For any integer $k\ge 3$,
    $\chi_{\rho, k}(P_\infty)> \frac{k-1}{2}$. 
\end{proposition}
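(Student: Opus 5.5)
Suppose $f$ is a packing $k$-domatic coloring of $P_\infty$ using colors in $[t]$, with partition $\Pi=\{A_1,\ldots,A_k\}$. We will show $k\le 2t$, which gives $t\ge k/2>\frac{k-1}{2}$. The plan is a double-counting (density) argument over a long window of the path. We compare how many domination ``incidences'' the $k$ sets need against how many the colors can supply.

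Fix a vertex $x$. Every set $A_j$ contains a vertex $w$ with $d(x,w)\le f(w)$, and the sets are disjoint. Hence at least $k$ distinct vertices $f$-dominate $x$. For a fixed color $i$, the vertices of color $i$ that $f$-dominate $x$ lie in the interval $N_i[x]$, which has $2i+1$ vertices. Any two vertices of color $i$ are at distance at least $i+1$. So at most two vertices of color $i$ lie in $N_i[x]$: one on each side of $x$, or $x$ itself together with nothing else. Consequently $x$ is $f$-dominated by at most $2t$ vertices, which already gives $k\le 2t$.

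A finer count would come from averaging. Let $\rho_i$ be the upper density of color $i$; the packing condition gives $\rho_i\le \frac{1}{i+1}$. A vertex of color $i$ dominates $2i+1$ vertices. Summing over a window of length $N$ and letting $N\to\infty$, the average number of dominators per vertex is at most $\sum_{i=1}^t \rho_i(2i+1)\le \sum_{i=1}^{t}\frac{2i+1}{i+1}<2t$. This must be at least $k$, so $k<2t$, i.e.\ $t>k/2$. The only step needing care is the boundary effect of the window, which contributes $O(t^2)$ and vanishes after dividing by $N$. I expect this to be routine.

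The local pigeonhole argument in the second paragraph alone suffices for the claimed bound $t>\frac{k-1}{2}$, since it gives $t\ge k/2$. Thus the main step is only checking that at most two vertices of each color lie within distance $i$ of $x$. That holds because three such vertices would force two of them on the same side of $x$, or one of them at $x$, at mutual distance at most $i$, contradicting the packing condition.
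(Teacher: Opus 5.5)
Your proof is correct, but it is organized differently from the paper's.

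The paper counts the possible dominators of $u_0$ by position only. If all colors are at most $m=\lfloor\frac{k-1}{2}\rfloor$, the dominators lie in the ball of radius $m$, which has $2m+1$ vertices. This settles even $k$ at once. For odd $k$ the ball has exactly $k$ vertices, so the paper adds a forcing step: the endpoints $u_{\pm m}$ must get color $m$, then $u_{\pm(m-1)}$ color $m-1$, and so on inward, until the color of $u_0$ clashes with one of them.

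You count by color instead: at most one dominator of color $i$ on each side of $x$, and none besides $x$ if $f(x)=i$. This gives at most $2t$ dominators, one fewer than the ball, which is exactly what the paper's forcing step extracts. It needs no parity split, and both routes yield $t\ge\lceil k/2\rceil$.

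Your averaging remark is also sound and gives more. The bound $k\le\sum_{i=1}^t\frac{2i+1}{i+1}<2t$ gains one for even $k$. For $t=3$ the sum is $\frac{59}{12}<5$, which already reproves $\chi_{\rho,5}(P_\infty)>3$.

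You can strengthen it further by using that the color densities sum to at most $1$. The bound then becomes the value of the linear program $\max\sum_i(2i+1)\rho_i$ subject to $\rho_i\le\frac{1}{i+1}$ and $\sum_i\rho_i\le1$, solved greedily from the top color. For $t=4$ this value is $\frac{88}{15}<6$, so $\chi_{\rho,6}(P_\infty)\ge5$. Together with the paper's construction, this settles $\chi_{\rho,6}(P_\infty)=5$, which the paper leaves open. In general the program gives $\chi_{\rho,k}(P_\infty)\ge\bigl(\frac{e}{2(e-1)}-o(1)\bigr)k$, well above $\frac{k-1}{2}$.
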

\begin{proof} Suppose to the contrary that $\chi_{\rho, k}(P_\infty) \leq  \frac{k-1}{2}$. This means that there exists a packing $k$-domatic coloring using colors from $\{1,2,\ldots,\lfloor \frac{k-1}{2} \rfloor\}$. 
Denote such a coloring by $f$ and let $\Pi=\{A_1, \ldots, A_k\}$ be an arbitrary partition of $V(P_\infty)$ such that $f|_{A_i}$ is a dominating broadcast for every $i \in [k]$. 
Consider the vertex $u_0$. Since $f$ uses colors from $\{1,2,\ldots,\lfloor \frac{k-1}{2} \rfloor\}$, only the vertices in $X=\{u_{-\lfloor \frac{k-1}{2} \rfloor}, u_{-\lfloor \frac{k-1}{2} \rfloor +1},\ldots, u_{\lfloor \frac{k-1}{2} \rfloor}\}$ can $f$-dominate $u_0$. 
If $k$ is even, then $|X|=k-1<k$, a contradiction to $f$ being a packing $k$-domatic coloring.
Next, suppose that $k$ is odd. In this case, $|X|=k$ and thus every vertex in $X$ must $f$-dominate $u_0$. 
Therefore, $u_{-\lfloor \frac{k-1}{2} \rfloor} =u_{ -\frac{k-1}{2}}$ $f$-dominates $u_0$. Since $d(u_0, u_{ -\frac{k-1}{2}})=\frac{k-1}{2}$, we have $f(u_{-\frac{k-1}{2}}) \geq \frac{k-1}{2}$. By our assumption, $f$ uses only colors from $\{1,2,\ldots, \frac{k-1}{2}\}$, hence $f(u_{-\frac{k-1}{2}})= \frac{k-1}{2}$.
Similarly, since $u_{\lfloor \frac{k-1}{2} \rfloor} =u_{ \frac{k-1}{2}}$ $f$-dominates $u_0$ and $d(u_0, u_{\frac{k-1}{2}})=\frac{k-1}{2}$, we know that $f(u_{\frac{k-1}{2}})=\frac{k-1}{2}$.
Further, the facts that $u_{ -\frac{k-1}{2}+1}$, $u_{\frac{k-1}{2}-1}$ $f$-dominate $u_0$ and $f$ is a packing coloring imply that $f(u_{ -\frac{k-1}{2}+1})=f(u_{\frac{k-1}{2}-1})=\frac{k-1}{2}-1$. Applying the same arguments successively to the vertices $u_{ -\frac{k-1}{2}+2}$, $u_{\frac{k-1}{2}-2}$, $u_{ -\frac{k-1}{2}+3}$ and $u_{\frac{k-1}{2}-3}, \ldots, u_{-1}$ and $u_1$ we derive that $f(u_{ -\frac{k-1}{2}+j})=f(u_{\frac{k-1}{2}-j})=\frac{k-1}{2}-j$ for any $j \in \{0,1,2, \ldots, \frac{k-1}{2}-1\}$. But then, $f(u_0)=f(u_{\frac{k-1}{2}-j})=\frac{k-1}{2}-j$ for some $j$, a contradiction to $f$ being a packing coloring since $d(u_0, u_{\frac{k-1}{2}-j})=\frac{k-1}{2}-j$. 
 \end{proof}

\bigskip

\subsection{Cycles}
It is well-known that for a cycle $C_n$ on $n$ vertices, $\chi_\rho(C_n)=3$ if $n=3$ or $n\equiv 0\pmod{4}$, and $\chi_\rho(C_n)=4$ otherwise. 
As an immediate consequence, $\chi_{\rho,k}(C_n)\ge \chi_\rho(C_n)=3$ whenever $n=3$ or $n\equiv 0\pmod{4}$, and $\chi_{\rho,k}(C_n)\ge \chi_\rho(C_n)\ge 4$ for all $n \geq 4$ with the property that $n\not\equiv 0\pmod{4}$. When $k=3$ these lower bounds are actually exact values. Note that a packing $3$-domatic coloring of $C_3$ can be obtained by assigning distinct colors from $[3]$ to its three vertices. Thus, $\chi_{\rho,3}(C_3)=3$.
\begin{proposition}
	\label{3_cycles}
	For any cycle $C_n$ with $n \geq 4$, 
	$$\chi_{\rho, 3}(C_n) =
	\left\{\begin{array}{ll}
		4 & \text{if } n \not\equiv 0 \ (\mathrm{mod} \ 4)\,,\\
		3 & \text{if } n \equiv 0 \ (\mathrm{mod} \ 4)\,.
	\end{array}\right.$$
\end{proposition}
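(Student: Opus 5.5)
The lower bounds come for free: $\chi_{\rho,3}(C_n)\ge \chi_\rho(C_n)$. This equals $3$ when $n\equiv 0\pmod 4$ and $4$ otherwise, by the well-known values recalled just before the statement. So it suffices to exhibit, in each residue class, a packing coloring with $\chi_\rho(C_n)$ colors together with a partition into three sets each of which $f$-dominates $C_n$. Equivalently, by Observation~\ref{ob:domandchi}, we show $\dchir(C_n)\ge 3$. Throughout we label the vertices $u_0,\ldots,u_{n-1}$ cyclically.

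\textbf{Case $n\equiv 0\pmod 4$.} We color by repeating the pattern $[1,2,1,3]$ around the cycle, which closes up consistently because $4\mid n$. We then take $A_i=f^{-1}(i)$ for $i\in[3]$. The $1$'s sit on every other vertex, so every vertex is within distance $1$ of a $1$. The $2$'s occur every $4$ steps with radius $2$, and likewise the $3$'s with radius $3$. Hence each $f|_{A_i}$ is a dominating broadcast, exactly as in the proof for $P_\infty$ in Theorem~\ref{infinite_paths}.

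\textbf{Case $n\not\equiv 0\pmod 4$, $n\ge 5$.} Here we use four colors and aim for the partition $A_1=f^{-1}(1)$, $A_2=f^{-1}(2)$, $A_3=f^{-1}(3)\cup f^{-1}(4)$. We start from the pattern $[1,2,1,3]$ repeated and insert a short correcting block containing a $4$ to fix the residue. For $n\equiv 1$, write $n=4m+5$ and use $[1,2,1,3]^m$ followed by $[1,2,1,3,4]$ ($C_5$ itself is $1,2,1,3,4$). For $n\equiv 2$, use $[1,2,1,3]^m$ followed by $[1,2,1,3,1,4]$, or a variant. For $n\equiv 3$, use $[1,2,1,3]^m$ followed by $[1,2,1,3,1,2,4]$, or a variant.

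In each case we must check three things:
\begin{enumerate}[(i)]
\item the packing condition across the wrap-around, in particular that $2$'s are at distance at least $3$ and $3$'s at distance at least $4$ cyclically;
\item that every vertex is within distance $1$ of a $1$, within distance $2$ of a $2$, and within distance $3$ of a $3$ or $4$;
\item that the small cycles $C_5,C_6,C_7$ are handled, possibly by separate ad hoc colorings.
\end{enumerate}
If odd cycles break the requirement that $1$'s dominate (two consecutive non-$1$ vertices), we will instead let the $4$ join $A_1$, or rebalance, for example $A_1=f^{-1}(1)\cup\{\text{the }4\}$.

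The main obstacle is choosing the correcting blocks so that all three conditions hold simultaneously near the seam. Condition (i) is the delicate one: the spacing of the $2$'s and $3$'s changes there, and the gap between $2$'s must stay at most $5$ so that radius $2$ still covers. We will first try placing the $4$ exactly where a $2$ or $3$ is missing, and let the $4$'s radius $4$ cover the stretched gap. The small cases $n\in\{5,6,7\}$ will be verified by hand.
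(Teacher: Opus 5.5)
Your overall plan matches the paper's: take the lower bound from $\chi_\rho(C_n)$, then use a $[1,2,1,3]$-based coloring with one vertex colored $4$ and partition by color classes, merging the $4$ into one class. Your blocks for $n\equiv 1$ and $n\equiv 3 \pmod 4$ are, up to rotation, exactly the paper's colorings. The checks you list do go through for them even with your choice $A_3=f^{-1}(3)\cup f^{-1}(4)$; the paper instead merges the $4$ into the $1$-class.

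The case $n\equiv 2\pmod 4$, however, fails as written. Take $n=4m+6$ and the coloring $[1,2,1,3]^{m}$ followed by $[1,2,1,3,1,4]$. This is a valid packing coloring. The $2$'s sit exactly at $u_{4j+1}$ for $0\le j\le m$, and $f(u_{4m+4})=1$, $f(u_{4m+5})=4$. The cyclic distance from $u_{4m+4}$ to $u_{4j+1}$ is $\min\{4(m-j)+3,\,4j+3\}\ge 3$, so no vertex of $A_2=f^{-1}(2)$ $f$-dominates $u_{4m+4}$. Already for $C_6$ with $1,2,1,3,1,4$, the set $A_2=\{u_1\}$ misses $u_4$. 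So the hand check of $C_6$ you defer would fail. Your announced fallback, moving the $4$ into $A_1$, does not help, because the defect is in the $2$-class.

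The repair follows your own last paragraph. The $4$ sits exactly where the next $2$ is missing, so it should join the $2$-class. Take $A_1=f^{-1}(1)$, which is all even positions and hence dominating since $n$ is even. Take $A_2=f^{-1}(2)\cup\{u_{4m+5}\}$. Take $A_3=f^{-1}(3)$; consecutive $3$'s are at most $6$ apart cyclically, so radius $3$ suffices. All three conditions then hold.

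Alternatively, use the paper's coloring: $2,4$ followed by $[1,2,1,3]^{(n-2)/4}$, in which consecutive $2$'s are at most $4$ apart. There the $4$ has to join the $1$-class, because the vertex colored $2$ next to it has no neighbor colored $1$. So the block and the class that receives the $4$ must be chosen together case by case, not fixed in advance as you did.

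(Incidentally, you already reduce the problem to $\dchir(C_n)\ge 3$. Theorem~\ref{thm:domlower}, applied to the connected graph $C_n$ with $\delta=2$, gives this at once.)
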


\begin{proof}
Let $n \geq 4$ be an arbitrary integer. By the above observation, we only need to prove the upper bounds on $\chi_{\rho, 3}(C_n)$. First, if $n \equiv 0 \ (\mathrm{mod} \ 4)$, we form a packing $3$-domatic coloring $f$ of $C_n$ as follows: color the consecutive vertices of $C_n$ successively by repeating the color pattern $[1, 2, 1, 3]$. 
If $n \not\equiv 0 \ (\mathrm{mod} \ 4)$, we distinguish three cases according  to the value of $n$. \newline
\textbf{Case 1. $n \equiv 1 \ (\mathrm{mod} \ 4)$.} \newline
First, let $f$ assign color $4$  to the vertex $u_1$. Next, apply the color pattern $[1, 2, 1, 3]$ to color the remaining consecutive vertices of $C_n$. \newline
\textbf{Case 2. $n \equiv 2 \ (\mathrm{mod} \ 4)$.} \newline
Let $f$ assign colors $2$ and $4$ to the vertices $u_1$ and $u_2$, respectively. Next, color the remaining vertices successively using the color pattern $[1,2,1,3]$. \newline	
\textbf{Case 3. $n \equiv 3 \ (\mathrm{mod} \ 4)$.} \newline
Let the vertices $u_1,u_2,u_3$ receive colors $1,2,4$ by $f$, respectively. Next, color the remaining vertices successively using the color pattern: $[1, 2, 1, 3]$.

It is straightforward to verify that, in each of the cases, $f$ is a packing coloring. Next, let $A_1=\{u_j;~f(u_j)=1 \}\cup \{u_k; f(u_k)=4\}$ and  $A_i=\{u_j;~f(u_j)=i\}$ for $2\leq i \leq 3$. Since $\Pi=\{A_1, A_2, A_3\}$ is a partition of $V(P_n)$ and for each $i \in [3]$, $f|_{A_i}$ is a dominating broadcast, we are done. 
\end{proof}

In the remainder of this section, we focus on establishing $\chi_{\rho, 4}(C_n)$ for all $n\ge 4$. Interestingly, the cycles $C_8$ and $C_{20}$ behave a bit awkwardly, and we deal with them separately.  

\begin{proposition}
\label{4_cycle}
$\chi_{\rho, 4}(C_8) =4$.
\end{proposition}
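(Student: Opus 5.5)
The statement has two parts. The lower bound $\chi_{\rho,4}(C_8)\ge 4$ comes from analysing the unique packing $3$-coloring pattern. The upper bound comes from exhibiting one explicit packing $4$-domatic coloring of $C_8$ with colors in $[4]$.

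\textbf{Lower bound.} Since $\chi_\rho(C_8)=3$, it suffices to show that no packing coloring of $C_8$ with colors in $[3]$ is a $4$-dominating broadcast.

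First, a packing $3$-coloring of $C_8$ must be, up to rotation and reflection, $f(u_1),\ldots,f(u_8)=1,2,1,3,1,2,1,3$. The color-$1$ vertices form an independent set of size $4$, and colors $2$ and $3$ must alternate on the remaining antipodal pairs.

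Let $A_1,\dots,A_4$ be a hypothetical partition. I will list, for each vertex $x$, the set of vertices that $f$-dominate $x$:
\begin{itemize}
\item $u_1$ is $f$-dominated exactly by $u_1,u_2,u_4,u_8$;
\item $u_3$ exactly by $u_3,u_2,u_4,u_8$;
\item $u_5$ exactly by $u_5,u_6,u_4,u_8$;
\item $u_7$ exactly by $u_7,u_6,u_8,u_4$.
\end{itemize}
Each of these lists has exactly four vertices, so each list must meet every $A_i$ exactly once. Hence $u_4$ and $u_8$ lie in two different sets, say $A_3$ and $A_4$, and all of $u_1,u_2,u_3,u_5,u_6,u_7$ lie in $A_1\cup A_2$.

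Now consider $u_8$. It is $f$-dominated only by $u_8,u_7,u_1,u_2,u_6$; note that $u_4$ is at distance $4>3$ from $u_8$. Thus $A_3$, which contains $u_4$, must also contain one of $u_1,u_2,u_6,u_7$. All of these are in $A_1\cup A_2$, a contradiction. The same argument with $u_4$ in place of $u_8$ handles the symmetric labelling.

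\textbf{Upper bound.} I would search among colorings that use color $4$ exactly once. A vertex of color $4$ forms a dominating broadcast of $C_8$ on its own, since the eccentricity of every vertex is $4$. This supplies one set of the partition for free, and leaves three sets to be formed from the vertices colored $1,2,3$.

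My first candidate is $1,2,1,4,1,2,1,3$. Here $\{u_4\}$ dominates alone and $\{u_2,u_6\}$ dominates. The vertex $u_8$ of color $3$ covers everything except $u_4$, so its set needs a partner adjacent to $u_4$. The color-$1$ vertices, together with one spare vertex, must form a dominating set of $C_8$.

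\textbf{Main obstacle.} The obstacle is precisely balancing these last two sets. Quick attempts with the candidate above leave one vertex, such as $u_3$ or $u_5$, undominated. If the small case analysis on this pattern fails, I would try placing colors $2$ and $3$ differently (for instance $3,1,2,1,4,1,2,1$ with a different split), or allowing color $3$ twice. As a last resort, the eight-vertex search is small enough to be settled by exhausting the few packing colorings in $[4]$ with a single $4$, and then reporting the partition found.
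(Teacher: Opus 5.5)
Your lower bound is sound. The dominator lists of $u_1,u_3,u_5,u_7$ are correct, the tight count forces $u_4$ and $u_8$ to be singleton parts, and $\{u_4\}$ does not reach $u_8$. The paper argues a little differently: no single vertex dominates $C_8$ with a color at most $3$, so every part has exactly two vertices, and any part containing a color-$1$ vertex needs one of the only two color-$3$ vertices. Your local count works just as well. Do add the one-line count behind the uniqueness of the $3$-coloring: at most two vertices can get color $2$ and at most two can get color $3$, so exactly four alternating vertices get color $1$.

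The genuine gap is the upper bound. You never produce a packing $4$-domatic coloring with colors in $[4]$, and the colorings you propose provably fail.

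\begin{itemize}
\item In $1,2,1,4,1,2,1,3$, the vertices $u_1,u_3,u_5,u_7$ have exactly the same four-element dominator lists as in your lower-bound argument. The color-$2$ vertices $u_2,u_6$ still reach only distance $2$, while $u_4$ and $u_8$ still reach distance $3$.
\item So your own argument forces $\{u_4\}$ and $\{u_8\}$ to be parts, and $\{u_8\}$, with color $3$, misses $u_4$. The ``balancing'' trouble you ran into is therefore an impossibility, not an obstacle.
\item Your fallback $3,1,2,1,4,1,2,1$ is the same coloring rotated by one position, so no split of it works either.
\end{itemize}

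What is needed is a different coloring, written down and checked, not a promised search. Two options work:

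\begin{itemize}
\item The paper uses $2,1,3,4,1,2,3,1$ on $u_1,\ldots,u_8$, with the four color classes as the parts.
\item In the spirit of your suggestion to use color $3$ twice, $1,2,1,3,1,4,1,3$ works with parts $\{u_6\}$, $\{u_1,u_4\}$, $\{u_3,u_8\}$, $\{u_2,u_5,u_7\}$. Here $u_6$, of color $4$, also reaches $u_1$ and $u_3$ at distance $3$, and this breaks the tight count.
\end{itemize}

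Either construction must be stated and verified to complete the proof.
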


\begin{proof}
 Let  $C_8=u_1u_2\cdots u_8u_1$. First, we prove that $\chi_{\rho, 4}(C_8)\geq4$. Suppose to the contrary that $\chi_{\rho,4}(C_8)=3$. Then there exists a packing $4$-domatic coloring $f$ of $C_8$ using three colors. Let $\Pi=\{A_1,A_2,A_3,A_4\}$ be a partition of $V(C_8)$ such that, for each $i\in[4]$, $f|_{A_i}$ is a dominating broadcast.
 Since $f$ is a packing coloring using $3$ colors, among any two consecutive vertices of $C_8$ one receives color $1$ by $f$. Indeed, suppose that $u_3$ has color $2$ and $u_4$ has color $3$. Then neither $u_1$ nor $u_2$ can be colored by a color in $\{2,3\}$, and only one of them can receive color $1$ by $f$, a contradiction. Therefore, $f(u_{2i-1})=1$ for any $i \in [4]$ or $f(u_{2i})=1$ for any $i \in [4]$. Without loss of generality we may assume that $f(u_2)=f(u_4)=f(u_6)=f(u_8)=1$, which implies the following: $f(u_1)=f(u_5)=3$, $f(u_3)=f(u_7)=2$ or $f(u_1)=f(u_5)=2$, $f(u_3)=f(u_7)=3$. Due to the symmetry, we can assume that $f(u_1)=f(u_5)=3$, $f(u_3)=f(u_7)=2$. 
 Since $|V(C_8)|=4$ and $|\Pi|=4$, we infer that $|A_i|=2$ for each $i \in [4]$ as $d(u_i,u_{i+4})=4$. However, if $u_{2i}\in A_j$ for some $i\in [4]$, then $f|_{A_j}$ can be a dominating broadcast only if $u_1$ or $u_5$ belongs to $A_j$. Consequently, only two of the sets $A_j$ can be dominating broadcasts, a contradiction. Hence $\chi_{\rho, 4}(C_8) \geq4$.

 To prove that $\chi_{\rho, 4}(C_8) \leq4$, we form a packing $4$-domatic coloring $f$ of $C_8$ using $4$ colors. Let $f$ assign colors $2,1,3,4,1,2,3,1$ successively to the vertices $u_1,u_2,\ldots,u_8$. Clearly, $f$ is a packing coloring. Let $A_i=\{u_j:\, f(u_j)=i\}$ for each $i\in[4]$. Since $\Pi=\{A_1,A_2,A_3, A_4\}$ is a partition of $V(C_8)$, and  $f|_{A_i}$ is a dominating broadcast for each $i\in[4]$, we conclude that $\chi_{\rho, 4}(C_8) = 4$. 
\end{proof}

\begin{proposition}
	\label{pro_20_cycle}
$\chi_{\rho, 4}(C_{20}) =4$.
\end{proposition}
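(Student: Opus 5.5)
The upper bound $\chi_{\rho,4}(C_{20})\le 4$ will come from an explicit construction. The lower bound $\chi_{\rho,4}(C_{20})\ge 4$ will come from showing that no packing $4$-domatic coloring uses only three colors; this mirrors the proof of Proposition~\ref{4_cycle}, but ends in a finite check.

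\textbf{Upper bound.} I would color $u_1,\ldots,u_{20}$ by repeating the pattern $[1,3,2,1,4]$ four times, and take $A_i=\{u_j:\, f(u_j)=i\}$ for $i\in[4]$. This is a packing coloring of $C_{20}$:
\begin{itemize}
\item color $1$ occurs at positions $\equiv 1,4 \pmod 5$, so consecutive $1$s are at distance $3$ or $2$;
\item colors $2$, $3$ and $4$ each recur with period $5$, which exceeds $2$, $3$ and $4$, respectively, also across the wrap-around since $5\mid 20$.
\end{itemize}
Each $A_i$ is a dominating broadcast:
\begin{itemize}
\item every vertex is adjacent to or equal to a vertex of color $1$;
\item a vertex of color $i\in\{2,3,4\}$ covers the $2i+1\ge 5$ consecutive vertices around it, and these windows tile the cycle with period $5$.
\end{itemize}

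\textbf{Lower bound.} Since $20\equiv 0\pmod 4$, we have $\chi_\rho(C_{20})=3$, so I must rule out a packing $4$-domatic coloring $f$ using only the colors $1,2,3$.

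First I would show that $f$ is forced, exactly as in the proof of Proposition~\ref{4_cycle}. Among any two consecutive vertices one has color $1$, so the $1$s occupy one parity class. The other class alternates between $2$ and $3$, because two $2$s or two $3$s at distance $2$ are forbidden. So $f$ is the pattern $[1,2,1,3]$ repeated five times, unique up to the symmetries of $C_{20}$.

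Next, for a vertex $u$ of color $1$, I would list its $f$-dominators: $u$ itself, its two neighbours (colors $2$ and $3$), and the color-$3$ vertex at distance $3$. The color-$1$ vertices at distance $2$ and the color-$2$ vertex at distance $3$ do not reach $u$. So $u$ has exactly four dominators, and in any partition $\{A_1,\ldots,A_4\}$ into dominating broadcasts these four vertices lie in pairwise distinct sets.

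Writing down these "distinctness" constraints for all ten color-$1$ vertices gives a graph $H$ on $V(C_{20})$. We need a proper $4$-coloring of $H$ that additionally satisfies the domination requirement at the color-$2$ and color-$3$ vertices. I would then propagate the constraints around the cycle:
\begin{itemize}
\item fix the classes of the four dominators of $u_1$;
\item observe that consecutive color-$1$ vertices share dominators, so each step forces the classes of the new vertices up to at most two choices;
\item after going once around the cycle, check that every branch conflicts, either with the constraints at the starting vertex or with domination of some color-$2$ or color-$3$ vertex (for example, a vertex of color $3$ is dominated by only a few vertices as well).
\end{itemize}

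\textbf{Main obstacle.} The last step is where I expect the difficulty. The period $4$ of the pattern and the length $20$ interact in a way that, for $C_{4m}$ with other $m$, does allow a partition, so a clean counting argument seems unlikely. If the hand case analysis becomes unwieldy, I would settle this finite check by an exhaustive computer search over all assignments of the twenty vertices to four classes, using the forced coloring $f$. This is what the paper's Appendix~A provides.
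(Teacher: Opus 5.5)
Your proposal is correct. The upper bound is exactly the paper's: the pattern $[1,3,2,1,4]$, with the color classes taken as $A_1,\dots,A_4$. The lower bound also starts as the paper's does, by forcing the pattern $[1,2,1,3]$, and as written both arguments end in a finite check.

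The difference is in the reduction before that check. The paper proves $|A_i|\ge 4$ for every $i$ by a coverage count, which only narrows a computer search to five size profiles. You observe instead that every color-$1$ vertex has exactly four $f$-dominators, which must therefore form a transversal of $\{A_1,\dots,A_4\}$. This is much stronger, and contrary to your expectation it closes the argument by hand, with no branching and no computer.

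Label $C_{20}$ as $u_0,\dots,u_{19}$ with $f(u_{2j+1})=1$, $f(u_{4i})=3$, $f(u_{4i+2})=2$. The $f$-dominators of $u_{4i+1}$, and likewise of $u_{4i+3}$, are $u_{4i},u_{4i+2},u_{4i+4}$ together with the vertex itself. The $f$-dominators of $u_{4i}$ are $u_{4i-2},\dots,u_{4i+2}$. Since $f$ is invariant under rotation by $4$, it suffices to treat two pairs of color-$3$ vertices.

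First, $u_0$ and $u_4$ both lie in the transversal $\{u_0,u_1,u_2,u_4\}$, so they are in different classes.

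Second, suppose $u_0$ and $u_8$ were in a common class $A$. The transversals $\{u_0,u_2,u_3,u_4\}$ and $\{u_4,u_5,u_6,u_8\}$ would then keep every $f$-dominator $u_2,\dots,u_6$ of $u_4$ out of $A$. So $f|_A$ would not dominate $u_4$, a contradiction.

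Any two of the five color-$3$ vertices are at distance $4$ or $8$. Hence they would need five distinct classes, which is impossible with four.

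Thus your route gives a short, computer-free proof, whereas the paper's size bound is only a pruning device. The same bookkeeping also explains the exceptional cycles. For $n=4m\ge 8$, a valid partition exists exactly when the classes $x_0,\dots,x_{m-1}$ of the color-$3$ vertices can be chosen with $x_j\notin\{x_{j+1},x_{j+2}\}$ (indices modulo $m$). This fails precisely for $m\in\{2,5\}$, that is, for $C_8$ and $C_{20}$.
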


\begin{proof}
First, we prove that $\chi_{\rho, 4}(C_{20})\geq4$. Suppose to the contrary that $\chi_{\rho,4}(C_{20})=3$. Then there exists a packing $4$-domatic coloring $f$ of $C_{20}$ using three colors. Let $\Pi=\{A_1,A_2,A_3,A_4\}$ be a partition of $V(C_{20})$ such that, for each $i\in[4]$, $f|_{A_i}$ is a dominating broadcast.
Let  $C_{20}=u_0u_1u_2\cdots u_{19}u_0$. As in the preceding proof, we note that since $f$ is a packing coloring using $3$ colors, among any two consecutive vertices of $C_{20}$ one receives color $1$ by $f$, and so the vertices with color $1$ alternate. 
Without loss of generality, we may assume that $f(u_{2j+1})=1$ for each $j\in[9]_0$, while $f(u_{4i})=3$ and $f(u_{4i+2})=2$ for each $i\in[4]_0$. In particular, exactly  five vertices in $V(C_{20})$ are assigned color $t$ for each $t\in\{2,3\}$. 

We claim that for every $i \in [4]$, $|A_i|\geq4$. Suppose to the contrary that exists $i\in[4]$ such that $|A_i|\leq3$. Without loss of generality, let $|A_1|\leq3$. Suppose that there exists a vertex $u_{i_j}\in A_1$ with $f(u_{i_j})\in \{1,2\}$. Then $f|_{A_1}$ is not a dominating broadcast, because $\sum_{j\in [3]}(2f(u_{i_j})+1)<20$ for $A_1=\{u_{i_1},u_{i_2},u_{i_3}\}$, 
a contradiction. 
Hence,  to all vertices in $A_1$ must be assigned color $3$ by $f$. Since there are five vertices in $C_{20}$ with color $3$,  two of these vertices must be at distance four. We can assume that $A_1=\{u_0,u_4,u_t\}$.  However, if $t=12$, then $d(u_8,u_k)> f(u_k)=3$ for each $k\in\{0,4,12\}$. If $t=8$ or $t=16$, then $d(u_{12},u_k)> f(u_k)=3$ for each $k\in\{0,4,t\}$. In either case, we get a contradiction with the assumption that $f|_{A_1}$ is a dominating broadcast. Thus, $|A_i|\geq 4$ for every $i\in[4]$. Relabeling the sets if necessary, we may assume that $|A_1|\geq |A_2|\geq |A_3|\geq |A_4|\geq4$.

To verify that $\chi_{\rho,4}(C_{20}) \geq 4$, we performed an exhaustive computational search over all possible 4-partitions of $C_{20}$. Since $|A_1|+|A_2|+|A_3|+|A_4|=20$, 
the only 
possibilities for $4$-tuples $(|A_1|, |A_2|, |A_3|, |A_4|)$ are $(4,4,4,8)$, $(4,4,5,7)$, $(4,4,6,6)$, $(4,5,5,6)$, and  $(5,5,5,5)$. The program examines each of these five cases and verifies that none admits a valid partition yielding a packing $4$-domatic coloring. See Appendix A for details. Hence, using three colors do not suffice to partition $V(C_{20})$ into $4$ dominating broadcast parts, which implies $\chi_{\rho,4}(C_{20}) \ge 4$.

To prove that $\chi_{\rho,4}(C_{20}) \leq 4$, it suffices to construct a packing  $4$-domatic coloring $f$ of $C_{20}$ using four colors. Define $f$ by repeating the  pattern $[1,3,2,1,4]$ on the  consecutive vertices of $C_{20}$. Then $f$ is a packing coloring. For each $i\in[4]$, let $A_i=\{u_j:\, f(u_j)=i\}$.  Since $\Pi=\{A_1,A_2,A_3,A_4\}$ is a partition of $V(C_{20})$, and $f|_{A_i}$ is a dominating broadcast for each $i\in[4]$, we conclude that $\chi_{\rho,4}(C_{20})\leq4$.
\end{proof}

\begin{proposition}
	\label{prop_chi_rho_4_finite_paths}
	For any cycle $C_n$ with $n \geq 4$,~and~$n\notin\{8,20\}$, 
	$$\chi_{\rho, 4}(C_n) =
	\left\{\begin{array}{ll}
		5 & \text{if } n=4\,,\\
		4 & \text{if } n \not\equiv 0 \ (\mathrm{mod} \ 4)\,,\\
		3 & \text{otherwise}  \,.
	\end{array}\right.$$
\end{proposition}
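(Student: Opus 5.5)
We treat the three cases $n=4$, $n\not\equiv 0 \pmod 4$, and $n\equiv 0\pmod 4$ separately. In each case the lower bound is either already known or is a short counting argument, and the upper bound is an explicit periodic coloring together with an explicit partition.

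\textbf{The case $n=4$.} For the lower bound we use that $C_4$ has diameter $2$ and that each of the four sets $A_i$ must $f$-dominate every vertex. Since $|V(C_4)|=4$, each $A_i$ is a singleton, so every vertex must dominate all of $C_4$ on its own and therefore has color at least $2$. Colors $2$ and $3$ can each be used at most once (any two vertices of $C_4$ are at distance at most $2$), while a singleton colored $1$ does not dominate. Hence four distinct colors from $\{2,3,\ldots\}$ are needed, which forces $\chi_{\rho,4}(C_4)\ge 5$. The coloring $2,3,4,5$ with singleton classes attains this bound.

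\textbf{The case $n\not\equiv 0\pmod 4$, $n\ne 4$.} The lower bound $4$ is $\chi_\rho(C_n)=4$, recalled before Proposition~\ref{3_cycles}. For the upper bound we imitate Proposition~\ref{prop_chi_rho_4_finite_paths} for paths: repeat the block $[1,3,2,1,4]$ and take $A_i$ to be color class $i$. Each class is then a dominating broadcast, because consecutive vertices of color $i$ are at distance $5\le 2i+1$ for $i\ge 2$, and the $1$'s are at distance $2$ or $3$. For $n\equiv 0 \pmod 5$ this works directly on the cycle. For other residues we write $n=5a+4b$ and mix the block $[1,3,2,1,4]$ with the blocks $[1,2,1,3]$ or $[1,2,1,4]$ (or small ad hoc blocks for small $n$). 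We must then check three things at each junction of blocks: the packing condition, that each class still dominates, and, where classes do not partition well, that we can move a few vertices between classes. We expect to keep $A_i$ equal to the color classes wherever possible. The remaining finite set of small $n$ (e.g.\ $n\in\{5,6,7,9,10,11,14,15\}$) would be handled by listing explicit colorings and partitions.

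\textbf{The case $n\equiv 0\pmod 4$, $n\notin\{4,8,20\}$.} The lower bound $3$ is again $\chi_\rho(C_n)=3$. For the upper bound, the argument in Proposition~\ref{4_cycle} shows that any $3$-coloring must be the pattern $[1,2,1,3]$ with $f(u_{2j+1})=1$, and $D_1=$ the $1$'s is one dominating broadcast. As in Theorem~\ref{infinite_paths} for $k=4$, the even vertices $u_0,u_2,u_4,\ldots$ (colors alternating $2,3$) must be split into three sets, each dominating. On $P_\infty$ the sets were residues $0,2,4$ mod $6$, and consecutive members at distance $6$ with colors $\{2,3\}$ dominate. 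On the cycle we need the $n/2$ even vertices, arranged cyclically with spacing $2$, split into three classes so that consecutive members of each class are at distance at most $f(x)+f(y)+1$: that is $6$ between a $2$ and a $3$, or $7$ between two $3$'s. The spacing-$6$ pattern (every third even vertex) closes up when $n\equiv 0 \pmod{12}$. For the other residues we insert a bounded number of gaps of length $4$, or use a $3$--$3$ pair at distance $6$ (which needs $6\le 7$). A gap of $8$ is available only between two $3$'s, but that violates $8\le 7$, so it cannot be used.

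\textbf{Main obstacle.} The main obstacle is this combinatorial step: showing that for $n\equiv 4,8\pmod{12}$ a valid cyclic $3$-partition exists exactly when $n\ne 8,20$. We would search for two local gadgets that shorten the cycle length by $4$ and $8$ relative to a multiple of $12$, and verify them. We expect $n=32$ and $n=28$ (or $44$) to be the base cases from which all larger lengths follow by adding blocks of length $12$.
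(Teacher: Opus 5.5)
Your case $n=4$ is the paper's argument, and for $12\mid n$ your split by residues modulo $6$ is exactly the paper's Case~1. \textbf{The genuine gap is the case $n\equiv 4,8\pmod{12}$.} There you commit to a framework in which all $1$'s form one class and the $n/2$ vertices colored $2$ or $3$ are split among the other three classes. This framework cannot work, so the gadgets you hope to find do not exist.

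Here is why. Take a class $A$ of vertices colored $2$ or $3$. Members are at least $2$ apart and have radius at most $3$, so no member reaches past the next member's coverage. Hence $A$ is a dominating broadcast of $C_n$ if and only if any two cyclically consecutive members $x,y$ are at most $f(x)+f(y)+1\le 7$ steps apart along the cycle. These vertices occupy every other position, so that step count is even, hence at most $6$. In other words, consecutive members are at most three places apart in the cyclic sequence of the $n/2$ vertices colored $2$ or $3$. Therefore each of your three classes has at least $n/6$ members. Since they partition a set of size $n/2$, each has exactly $n/6$ members and all gaps equal $6$, which forces $12\mid n$. Consequently:
\begin{itemize}
\item inserting a gap of length $4$ in one class forces a gap of length at least $8$ in another;
\item a $3$--$3$ pair at distance $6$ does not exist, because vertices of color $3$ are at mutual distances divisible by $4$.
\end{itemize}
The faulty step is your assertion that the even vertices ``must be split into three sets''. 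The coloring $[3,1,2,1]$ is forced, but the partition is not.

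\textbf{Missing idea.} You must break up the class of $1$'s. The paper moves a few color-$1$ vertices into the other three classes to bridge gaps that are too long. It compensates by placing one or two vertices of color $2$ or $3$ (covering $5$, resp.\ $7$, consecutive vertices) into the class holding the remaining $1$'s. For example, for $n=4t\equiv 4\pmod{12}$ with $t$ odd, it takes
\[
A_3=\{u_3,u_{4t-3}\}\cup\{u_{2i}: 3\le i\le 2t-4\}.
\]
It then gives $u_2,u_4$ to $A_2$, $u_{4t-6},u_{4t-4}$ to $A_1$, and $u_{4t-2},u_{4t}$ to $A_4$. Cases~3--5 are analogous, with longer patches.

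\textbf{Secondary issue, $n\not\equiv 0\pmod 4$.} Your upper bound here is only a plan, and the block mixing you suggest fails at junctions as stated. For instance, $[1,2,1,3]$ followed by $[1,3,2,1,4]$ puts two $3$'s at distance $2$. Likewise, $[1,3,2,1,4]$ followed by $[1,2,1,4]$ puts two $4$'s at distance $4$. This is repairable as in the paper: use a single periodic pattern and recolor only the last one to four vertices, taking the color classes as the partition. The pattern is $[1,3,2,1,4]$ for $n\equiv 0,1,2\pmod 5$ and $[2,1,3,4,1]$ for $n\equiv 3,4\pmod 5$.
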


\begin{proof}
  Let $V(C_4)=\{u_1,u_2,u_3,u_4\}$. Suppose, to the contrary, that $\chi_{\rho,4}(C_4)=4$. This means that there exists a packing $4$-domatic coloring $f$ of $C_4$ using $4$ colors. Let $\Pi=\{A_1, A_2, A_3, A_4\}$ be a partition of $V(C_4)$ such that for each $i \in [4]$, $f|_{A_i}$ is a dominating broadcast. Since $|V(C_4)|=4$, it follows that $|A_i|=1$ for each $i \in [4]$. Additionally, since $f|_{A_i}$ is a dominating broadcast, we conclude that $f(u_i) \neq 1$ for any $i \in [4]$. 
  Further, since diam$(C_4)=2$, $f$ assigns each of the colors to at most one vertex of $C_4$, thus $\chi_{\rho, 4}(C_4) \geq 5$. 
By setting $f(u_1)=5$, $f(u_2)=2$, $f(u_3)=3$ and $f(u_4)=4$, we obtain a packing $4$-domatic coloring $f$ of $C_4$ using $5$ colors, thus $\chi_{\rho, 4}(C_4) =5$.	
	
Next, let $n \geq 5$. If $n \not\equiv 0 \ (\mathrm{mod} \ 4)$, then $\chi_{\rho, 4}(C_n)\geq\chi_{\rho}(C_n)=4$. To prove that $\chi_{\rho, 4}(C_n) \leq 4$, we construct a packing $4$-domatic coloring $f$ of $C_n$ using $4$ colors. Let $C_n=u_1u_2\cdots u_nu_1$. To describe this coloring, we distinguish two cases with respect to the value of $n$.  \vspace{1mm}\\
\textbf{Case 1. $n \equiv 0,1,2 \ (\mathrm{mod} \ 5)$.}\\ 
In this case,  we apply the color pattern $[1, 3, 2, 1, 4]$ to color the  consecutive vertices of $C_n$. For $n \equiv 1 \ (\mathrm{mod} \ 5)$ and  $n \equiv 2 \ (\mathrm{mod} \ 5)$, we adjust the colors of the last one or two vertices while keeping the rest of the coloring unchanged. Specifically,  set $f(u_n)=2$ if  $n \equiv 1 \ (\mathrm{mod} \ 5)$, and  set $f(u_{n-1})=1$ and  $f(u_n)=2$ if $n \equiv 2 \ (\mathrm{mod} \ 5)$. For  $n \equiv 0 \ (\mathrm{mod} \ 5)$, no adjustment is required.
\vspace{1mm}\\
\textbf{Case 2. $n \equiv 3,4 \ (\mathrm{mod} \ 5).$}\\ 
We apply the color pattern $[2,1,3,4,1]$ to color  consecutive vertices of $C_n$. For $n\equiv 3\ (\mathrm{mod} \ 5)$, we keep the pattern for vertices $u_1,u_2,\dots,u_{n-3}$ and set $f(u_{n-2})=2$, $f(u_{n-1})=3$, and $f(u_n)=1$. For $n\equiv 4\ (\mathrm{mod} \ 5)$, we retain the pattern on $u_1,u_2,\dots,u_{n-4}$ and set $f(u_{n-3})=2$, $f(u_{n-2})=1$, $f(u_{n-1})=3$, and $f(u_n)=1$.  

It is straightforward to verify that $f$ is a packing coloring in each case. Next, for each $i \in [4]$, let $A_i=\{u_j : f(u_j)=i\}$. Since $\Pi=\{A_1, A_2, A_3, A_4\}$ is a partition of $V(P_n)$ and $f|_{A_i}$ is a dominating broadcast  for each $i \in [4]$, we conclude that $\chi_{\rho, 4}(C_n) \leq 4$.

Finally, let $n \equiv 0 \ (\mathrm{mod} \ 4)$, where $n\notin\{4,8,20\}$. Thus, $\chi_{\rho, 4}(C_n)\geq\chi_{\rho}(C_n)=3$, and let us prove $\chi_{\rho, 4}(C_n) \leq 3$. Starting from the vertex $u_1$ up to $u_n$, we color the vertices using the pattern $[3, 1, 2, 1]$ by periodically repeating it. We will present a partition $\Pi=\{A_1, A_2, A_3, A_4\}$ of $V(C_n)$ such that  $f|_{A_i}$ is a dominating broadcast  for each $i \in [4]$.  Letting $n=4t$, we describe the mentioned partition by distinguishing five cases according to the value of $n$.   \vspace{1mm}\\
\textbf{Case 1.} $4t \equiv 0 \ (\mathrm{mod} \ 12)$.\\
In this case,  let $A_1=\{u_{2i} : i\in [2t]\}$, $A_2=\{u_{6i+1} : i\in [\frac{2t}{3}-1]_0\}$, $A_3=\{u_{6i+3} : i\in [\frac{2t}{3}-1]_0\}$,  and $A_4=\{u_{6i+5} : i\in [\frac{2t}{3}-1]_0\}$.
\vspace{1mm}\\
\textbf{Case 2.} $4t \equiv 4 \ (\mathrm{mod} \ 12)$ and $t$ is odd.\\
In this case, let $A_1=\{u_{4t-4}, u_{4t-6}, u_{6i+1} : i\in [\lfloor\frac{2t}{3}\rfloor-1]_0\}$, $A_2=\{u_2, u_4, u_{6i+3} : i\in [\lfloor\frac{2t}{3}\rfloor]\}$, $A_3=\{u_3,u_{4t-3},u_{2i} : i\in \{3,\ldots,2t-4\}\}$,  and $A_4=\{u_{4t}, u_{4t-2}, u_{6i+5} : i\in [\frac{2t-5}{3}]_0\}$.
\vspace{1mm}\\
\textbf{Case 3.} $4t \equiv 8 \ (\mathrm{mod} \ 12)$ and $t$ is odd.\\
In this case, let $A_1=\{u_{4t-4}, u_{4t-6},  u_{4t-8}, u_{4t-10}, u_{6i+1} : i\in [\frac{2t-7}{3}]_0\}$, $A_2=\{u_2, u_4, u_{4t}, u_{4t-2}, u_{6i+3} : i\in
[\frac{2t-4}{3}]\}$, $A_3=\{u_3,u_{4t-3}, u_{4t-9}, u_{4t-15}, u_{2i} : i\in \{3,\ldots,2t-10\}\}$,  and $A_4=\{u_{4t-1}, u_{4t-7}, u_{4t-12}, u_{4t-14},\newline
 u_{4t-16}, u_{4t-18}, u_{6i+5} : i\in[\frac{2t-13}{3}]_0\}$.
 \vspace{1mm}\\
\textbf{Case 4.} $4t \equiv 4 \ (\mathrm{mod} \ 12)$ and $t$ is even.\\
In this case, let $A_1=\{u_{4t-4}, u_{4t-6}, u_{6i+1} : i\in [\frac{2t-5}{3}]_0\}$, $A_2=\{u_{4t-3}, u_{2i} : i\in [2t-4]\}$, $A_3=\{u_{6i+3} : i\in [\frac{2t-2}{3}]_0\}$,  and $A_4=\{u_{4t}, u_{4t-2}, u_{6i+5} : i\in [\frac{2t-5}{3}]_0\}$.
\vspace{1mm}\\
\textbf{Case 5.} $4t \equiv 8 \ (\mathrm{mod} \ 12)$ and $t$ is even.\\
Let $A_1=\{u_{4t-4}, u_{4t-6}, u_{4t-8}, u_{4t-10}, u_{6i+1} : i\in [\frac{2t-7}{3}]_0\}$, $A_2=\{u_{4t-3}, u_{4t-9}, u_{4t-15}, u_{2i} : i\in [2t-10]\}$, $A_3=\{u_{4t},u_{4t-2},u_{6i+3} : i\in [\frac{2t-4}{3}]_0\}$,  and $A_4=\{u_{4t-1}, u_{4t-7},  u_{4t-12},  u_{4t-14},  u_{4t-16},  u_{4t-18},
 u_{6i+5} : i\in [\frac{2t-13}{3}]_0\}$.
 
It is straightforward to verify that in each of the cases, $f|_{A_i}$ is a dominating broadcast for every $i\in [4]$. Therefore, $\chi_{\rho, 4}(C_n) \leq 3$, and the proof is complete.
\end{proof}

\section{Packing domatic number in graphs with $\delta\ge 2$}
\label{sec:domatic}

In Observation~\ref{obs:domatic} we proved that $\dchiS(G)$ is at least $2$ in any isolate-free graph $G$, which is a direct consequence of the classical bound $\dom(G)\ge 2$. In the next result, we improve this bound for graphs with minimum degree at least $2$,  which indicates the strength of the new setting.

\begin{theorem}
\label{thm:domlower}
If $G$ is a connected graph with $\delta(G)\ge 2$, then $$\dchir(G)\ge 3.$$
\end{theorem}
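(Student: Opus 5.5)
The plan is to take an optimal packing coloring with one extra normalization and then build the three dominating broadcasts by hand. By Observation~\ref{ob:domandchi}, it suffices to find a packing coloring $f$ with $\chi_\rho(G)$ colors and a partition $\{A_1,A_2,A_3\}$ of $V(G)$ such that each $f|_{A_i}$ is a dominating broadcast. Let $P_i$ denote the color class of $i$.

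\textbf{Normalization.} Among all packing colorings with $\chi_\rho(G)$ colors, I choose $f$ so that $P_1$ is a \emph{maximal} independent set. This is possible: if some vertex $x\notin P_1$ has no neighbor in $P_1$, recolor $x$ with $1$. This keeps $f$ a packing coloring and does not increase the number of colors, and repeating it terminates. Then $P_1$ is independent and dominating. Put $A_1=P_1$; since every vertex of $P_1$ broadcasts with range $1$, $f|_{A_1}$ is a dominating broadcast.

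\textbf{Splitting the rest.} Let $B=V(G)\setminus P_1$. Every vertex of $B$ has color at least $2$. So it suffices to split $B$ into $A_2,A_3$ such that every vertex of $G$ lies within distance $2$ of both $A_2$ and $A_3$. Let $H$ be the graph on $B$ with $xy\in E(H)$ whenever $d_G(x,y)\le 2$.
\begin{itemize}
\item $H$ is connected: on any path of $G$, no two consecutive vertices lie in $P_1$, so consecutive $B$-vertices on the path are at distance at most $2$.
\item $|B|\ge 2$, because $\delta(G)\ge 2$ and $P_1$ is independent.
\end{itemize}
Take a BFS spanning tree $T$ of $H$ and let $A_2,A_3$ be the two parity classes of $T$-depth. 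Every $x\in B$ has a $T$-neighbor in the other class, so $x$ is within distance $2$ of both parts. It remains to handle each $v\in P_1$, which has at least two neighbors, all in $B$, and these are pairwise adjacent in $H$. If two neighbors of $v$ have different parity, $v$ is dominated by both parts.

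\textbf{The main obstacle} is a vertex $v\in P_1$ all of whose $B$-neighbors have the same parity. Going through their tree parents only gives distance $3$ from $v$. I would try three fixes, in this order.
\begin{enumerate}
\item Choose $T$ more cleverly, as a BFS in $G$ itself, so that the $B$-neighbors of $v$ appear in consecutive layers whenever possible.
\item If some vertex of $N_2(v)\cap B$ lies in the opposite class, $v$ is already fine, since $N_2[v]\cap B$ is larger than $N(v)$. Otherwise I would swap the class of one neighbor $x$ of $v$. This is allowed provided the vertices that relied on $x$ have another representative, which I would check using $\deg(x)\ge 2$.
\item Use the flexibility in the normalization: among colorings with $P_1$ maximal independent, choose one minimizing the number of such bad vertices $v$, and derive a contradiction by a local recoloring around $v$. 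One candidate is moving $v$ into $B$ and a neighbor into $P_1$; another is using vertices of color at least $3$ near $v$, which reach distance $3$.
\end{enumerate}
This extremal or local-exchange step is where I expect the real work of the proof to lie.
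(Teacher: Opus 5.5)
\textbf{There is a genuine gap: the proposal stops exactly where the proof needs content, and your range-$2$ reduction can be infeasible.}

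Your opening normalization ($P_1$ a maximal independent set, $A_1=P_1$) is exactly the paper's. Your remarks are also correct: $H$ is connected, and the parity split handles all of $B$ and every $v\in P_1$ whose neighbours sit at different depths. But you only identify the bad vertices $v\in P_1$ and list three possible repairs, none carried out.

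Repairs 1 and 2 cannot succeed in general, because your reduction, which treats every vertex of $B$ as having range $2$, can fail for an admissible coloring. Take $C_6=u_1u_2\cdots u_6u_1$ with colors $1,2,1,3,1,4$ in order.
\begin{itemize}
\item This is an optimal packing coloring, and $P_1=\{u_1,u_3,u_5\}$ is maximal independent.
\item Here $B=\{u_2,u_4,u_6\}$, and $N_2[u_1]\cap B$, $N_2[u_3]\cap B$, $N_2[u_5]\cap B$ are exactly the three pairs from $B$.
\item Any split of $B$ into two parts puts one such pair in a single part. The corresponding $u_j$ then has no vertex of the other part within distance $2$.
\end{itemize}
Similarly, $C_{10}$ with colors $1,2,1,3,1,2,1,3,1,4$ gives an odd cycle of such pairs. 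So for this $f$ no choice of spanning tree and no swapping can work. You must either change $f$ or use ranges larger than $2$. For the first option (repair 3) you give no argument that a suitable coloring always exists.

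\textbf{The missing idea.} The paper takes the second route. It splits $B$ by colors, not by parity: $D_2$ consists essentially of the color-$2$ vertices, and $D_3$ of the vertices of color at least $3$, which may dominate at distance $3$.
\begin{itemize}
\item A second extremal choice, $|P_2|$ maximum, guarantees that every vertex of color at least $3$ has a color-$2$ vertex within distance $2$.
\item The only $P_1$-vertices then missed by $P_2$ form the set $A_1''$ of vertices with no color-$2$ vertex within distance $2$.
\item These are repaired by moving a minimal set $W\subseteq N(A_1'')$ that dominates $A_1''$ into $D_2$.
\item In exchange, the set $U$ is moved to $D_3$. It consists of the color-$2$ vertices all of whose neighbours lie in $P_1$ and all of whose vertices at distance $2$ lie in $W$.
\end{itemize}
The hypothesis $\delta(G)\ge 2$ is used twice.
\begin{itemize}
\item Each vertex of $P_1$ has a neighbour of color at least $3$, since two color-$2$ neighbours would be at distance $2$.
\item Minimality of $W$ together with $\deg(z)\ge 2$ for $z\in A_1''$ yields, for every $y\in W$, a vertex of $D_3$ within distance $2$ of $y$. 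That vertex is then within distance $3$ of every $P_1$-neighbour of $y$.
\end{itemize}
This distance-$3$ step is exactly what your range-$2$ framework cannot supply.
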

\begin{proof}
Note that $\chi_\rho(G)\ge 3$ for any connected graph unless $G$ is a star $K_{1,r}$. Thus, by the assumption $\delta(G)\ge 2$, we infer that $\chi_\rho(G)\ge 3$. Note that for an infinite sequence $S$, $\dchiS(G)$ is defined as $\dom_{\chi_{S'}}(G)$, where $S'=(s_1,\ldots,s_{\chi_S(G)})$ is the finite sequence consisting of the first $\chi_S(G)$ terms of $S$. Thus, we can write $S'=(1,2,\ldots,\chi_\rho(G))$. 

Let $f$ be an arbitrary $S'$-packing coloring of $G$. We will show that $f$ is a $3$-dominating broadcast by presenting a partition $\pi=\{D_1,D_2,D_3\}$ of $V(G)$ such that $f|_{D_i}$ is a dominating broadcast for all $i\in [3]$. 

We claim that we may assume that the set $A_1=\{v\in V(G):\, f(v)=1\}$ is a maximal independent set. Indeed, if $A_1$ is not a maximal independent set we may select an arbitrary maximal independent set $B_1$ such that $A_1\subsetneq B_1$ and for any $v\in B_1\setminus A_1$ replace the color of $v$ assigned by $f$ to color $1$. It is clear that the resulting coloring is still an $S'$-packing coloring. Now, under this assumption, letting $D_1=A_1$, we note that $D_1$ is an independent dominating set, and so $f|_{D_1}$ is a dominating broadcast, as desired. 

In addition to the above assumption, let $f$ be chosen in such a way that the set $A_2=\{v\in V(G):\, f(v)=2\}$ is as large as possible. We claim that, under this assumption, every vertex $x\in V(G)$ with $f(x)>2$ has a vertex $y\in A_2$ such that $d_G(x,y)\le 2$. Indeed, if a vertex $x\in V(G)$ with $f(x)>2$ has no vertices $y$ with $f(y)=2$ within distance $2$, then we can redefine $f$ by changing the value at $x$ to $2$, and the resulting function $f'$ is still an $S'$-packing coloring of $G$ with $A_1$ forming a dominating set, yet $|\{v:\,f'(v)=2\}|>|A_2|$, a contradiction.  

Let $$A_1''=\{v\in A_1:\,\textrm{ if }d_G(x,v)\le 2,\textrm{ then }f(x)\ne 2\}.$$
In other words, $A_1''$ are the vertices in $A_1$ that are not $f$-dominated by $A_2$. Let $A_1'=A_1\setminus A_1''$. If $A_1''\ne\emptyset$, then define $W$ as a smallest subset of $N_G(A_1'')$ such that every $x\in A_1''$ has a neighbor in $W$. Note that $W\cap A_2=\emptyset$. In addition, now that $W$ is defined, define a subset $U$ of vertices in $A_2$ as follows: $u\in U$ if $u\in A_2$, $N_G(u)\subset A_1$, and for every $v\in N_G(u)$ we have $N_G(v)\setminus \{u\}\subset W$. In other words, all neighbors of $u$ are in $D_1$ and their neighbors (with the obvious exception of $u$) belong to $W$. 

Let $D_2=(A_2\setminus U)\cup W$, and $D_3=(A_{\ge 3}\setminus W)\cup U$, where $$A_{\ge 3}=\{v\in V(G):\,f(v)\ge 3\}.$$  
Note that $\pi=\{D_1,D_2,D_3\}$ is a partition of $V(G)$. As mentioned earlier, $f|_{D_1}$ is a dominating broadcast, since $D_1$ is a dominating set. Let us now consider the vertices in $D_2$. 

\begin{claim}
\label{cl:0} $f|_{D_2}$ is a dominating broadcast.
\end{claim}
\noindent {\it Proof of Claim~\ref{cl:0}.}
As proved earlier, vertices of $A_2$ $f$-dominate vertices in $A_{\ge3}$. Hence, vertices in $A_2\setminus U$ dominate all vertices of $A_{\ge 3}$ with the possible exception of vertices in $A_{\ge 3}$ that are $f$-dominated by a vertex in $U$. However, by definition of a vertex  $u\in U$, among vertices in $A_{\ge 3}$ vertex $u$ only $f$-dominates a vertex $w\in W$, which is itself in $D_2$. Hence, vertices in $A_2\setminus U$, $f$-dominate all vertices in $A_{\ge 3}\setminus W$. In addition, vertices in $U$ are $f$-dominated by vertices in $W$. Combining these two findings we infer that vertices in $D_2=(A_2\setminus U)\cup W$ $f$-dominate all vertices in $(A_{\ge 3}\setminus W)\cup U=D_3$.

Vertices in $A_2\setminus U$ also $f$-dominate vertices in $A_1'$ by the definition $A_1'$, again with a possible exception of the vertices $R$ in $A_1'$ that are adjacent to a vertex $u\in U$; however, vertices in $R$ are dominated by a vertex $w\in W$, which is in $N_G(R)$. In addition, vertices of $W$ $f$-dominate vertices in $A_1''$. Hence, vertices in $D_2=(A_2\setminus U)\cup W$ $f$-dominate all vertices in $A_1$. Thus, $f|_{D_2}$ is a dominating broadcast. \hskip 0.6cm \smallqed

\medskip 

Next, let us consider $f|_{D_3}$; we divide the proof into two claims. 

\begin{claim}
\label{cl:1}
Every vertex in $A_1$ is $f$-dominated by a vertex in $D_3$.    
\end{claim}
\noindent {\it Proof of Claim~\ref{cl:1}.} Let $x\in A_1$. Note that $x$ has at least one neighbor $y$ in $A_{\ge 3}$. If $y\in D_3$, then $x$ is $f$-dominated by a vertex in $D_3$, and we are done. Thus, assume that $y\in W$. Hence, $y$ has a neighbor in $A_1''$ and let $Z=N_G(y)\cap A_1''$. Note that all neighbors of vertices in $Z$ belong to $A_{\ge 3}$. We claim that there exists at least one vertex $z\in Z$, which has a neighbor in $V(G)\setminus W$. 

Suppose to the contrary that for every vertex $z$ in $Z$ its neighborhood belongs to $W$. Then, $W\setminus \{y\}$ still has the property that every vertex in $A_1''$ has a neighbor in  $W\setminus \{y\}$. Indeed, since $\deg_G(z)\ge 2$ for each $z\in Z$, there exists at least one neighbor of $z$   that belongs to $W\setminus\{y\}$.  This is a contradiction to the minimality of $W$, thus our supposition was false.   

Hence there exists a vertex $z\in Z$, which has a neighbor $u$ in $V(G)\setminus W$, and note that $f(u)\ge 3$ since $z\in A_1''$. Thus, $u\in D_3$ and $$d_G(x,u)\le d_G(x,y)+d_G(y,u)\le 3\le f(u),$$ 
that is, $x$ is $f$-dominated by $u$.  \hskip 0.6cm \smallqed

\begin{claim}
\label{cl:2}
Every vertex in $D_2$ is $f$-dominated by a vertex in $D_3$.    
\end{claim}
\noindent {\it Proof of Claim~\ref{cl:2}.} Note that $D_2=(A_2\setminus U)\cup W$. 
The proof that every vertex in $W$ is $f$-dominated by a vertex in $D_3$ is similar as the second part of the proof of Claim~\ref{cl:1}. Notably, if $y\in W$, then $y$ has a neighbor in $A_1''$ and we again define $Z=N_G(y)\cap A_1''$. By following the same lines as in the above proof, we derive that there exists at least one vertex $z\in Z$, which has a neighbor $u$ in $V(G)\setminus W$. Since $u$ belongs to $A_{\ge 3}$ and $u\notin W$, we get $u\in D_3$. Clearly, $y$ is $f$-dominated by $u$, since it is at distance at most $2$ from $u$ with $f(u)\ge 3$. 

Now, consider a vertex $y\in A_2\setminus U$. If $y$ has a neighbor $x$ in $A_{\ge 3}$, then either $x\in D_3$ or $x\in W$. In the former case, $y$ is $f$-dominated by $x\in D_3$ and we are done. In the latter case, when $x\in W$ we derive in the same way as in the previous paragraph, that there is a vertex $u\in D_3$, which is at distance at most $2$ from $x$. Thus $u\in D_3$ $f$-dominates $y$, since $d_G(y,u)\le d_G(y,x)+d_G(x,u)\le 3\le f(u)$. 

Finally, assume that $y\in A_2\setminus U$ and $N_G(y)\subset D_1$. Let $T=N_G(y)$. Since $y\notin U$, there exists a vertex $t\in T$ such that $N_G(t)$ contains a vertex $x\notin W$. Since $d_G(y,x)=2$, we derive $f(x)\ge 3$, and consequently $x\in D_3$. Clearly, $x$ $f$-dominates $y$, since $d_G(y,x)=2< 3\le f(x)$. 

All in all, we proved that every vertex in $D_2$ is $f$-dominated by a vertex in $D_3$. \hskip 0.6cm \smallqed

\medskip 

Combining Claims~\ref{cl:1} and~\ref{cl:2} we derive that $f|_{D_3}$ is a dominating broadcast. Thus, $f$ is a packing $3$-domatic coloring of $G$, which implies $\dchir(G)\ge 3$.  
\end{proof}

We also see that the bound in Theorem~\ref{thm:domlower} is sharp. Indeed, if $n\equiv 0 \pmod 4$, we have $\chi_\rho(C_n)=3$, and by Proposition~\ref{3_cycles} we have $\chi_{\rho, 3}(C_n)=3$. Since by Proposition~\ref{4_cycle}, we have $\chi_{\rho,4}(C_8)=4$, we all in all infer that $\dchir(C_8)=3$. Similarly, Proposition~\ref{pro_20_cycle} provides $\chi_{\rho,4}(C_{20})=4$ and so $\dchir(C_{20})=3$. An infinite family of examples is obtained from the edge $xy$ by adding $\ell\ge 1$ common neighbors to vertices $x$ and $y$. The resulting graph $G_\ell$ has $\ell+2$ vertices, and note that $G_1$ is the graph $K_3$ and $G_2$ is the graph diamond $K_4-e$. In addition, $\chi_\rho(G_\ell)=3$. Indeed, letting $f(x)=2,f(y)=3$ and $f(u)=1$ for all vertices $u\notin\{x,y\}$ is a packing coloring of $G_\ell$, which is unique up to applying the symmetry of vertices $x$ and $y$. By Theorem~\ref{thm:domlower}, $\dchir(G_\ell)\ge 3$, and a set $A$, where $A\subseteq V(G_\ell)\setminus\{x,y\}$, is a dominating broadcast only if $A=V(G_{\ell})\setminus\{x,y\}$. Hence, $V(G_\ell)$ can be partitioned into (at most) three dominating broadcasts. Thus, $\dchir(G_\ell)=3$

Theorem~\ref{thm:domlower} is best possible also in the sense that the condition $\delta(G)\ge 2$ cannot be omitted. One can find many examples of graphs $G$ with $\delta(G)=1$ and $\dchir(G)=2$. A simple example are stars $K_{1,r}$ for which $\chi_\rho(K_{1,r})=2$. Another family of examples are graphs $G$ obtained from a double star by subdividing the edge between the two non-leaf vertices. In this case, $\chi_\rho(G)=3$ and again $\dchir(G)=2$. An example with $\chi_\rho(G)=4$ and again $\dchir(G)=2$ is depicted in Fig.~\ref{fig:exampleC}. A packing coloring with $4$ colors is depicted in Fig.~\ref{fig:exampleC}, while an alternative one can be obtained from it by swapping the colors $3$ and $2$ in the two vertices of the $5$-cycle and then adjust the colors in the corresponding ``tails'' that stem from these two vertices. In addition, one can modify the coloring in Fig.~\ref{fig:exampleC} by changing the color $1$ to color $2$ in one of the two twin leaves (the white vertices) adjacent to a vertex colored by color $3$. Regardless if this is done or not, a white leaf (with color $2$ if the color is changed) can only be $f$-dominated by two vertices.  Thus, indeed $\dchir(G)=2$.

\begin{figure}[h!]
\begin{center}
\begin{tikzpicture}[vertex/.style={circle, draw, fill=black, inner sep=2pt}]
    \node[vertex] (v1) at (90:2) {};
    \node[vertex] (v2) at (162:2) {};
    \node[vertex] (v3) at (234:2) {};
    \node[vertex] (v4) at (306:2) {};
    \node[vertex] (v5) at (18:2) {};
    \node[vertex] (v6) at (0,3.4) {};
     \node[vertex] (v7) at (1.4,2) {};
    \node[vertex] (v8) at (2.8,2) {};   
    \node[circle, draw, fill=white,inner sep=2.5pt] (v9) at (4.2,2) {};
    \node[circle, draw, fill=white,inner sep=2.5pt] (v10) at (3.7,2.6) {};
      \node[vertex] (x6) at (1.2,-3) {};
     \node[vertex] (x7) at (2.6,-1.6) {};
    \node[vertex] (x8) at (4,-1.6) {};   
    \node[vertex] (x9) at (5.4,-1.6) {};
    \node[vertex] (x10) at (4.9,-2.3) {};
    \node[vertex] (y6) at (-1.2,-3) {};
     \node[vertex] (y7) at (-2.6,-1.6) {};
    \node[vertex] (y8) at (-4,-1.6) {};   
    \node[vertex] (y9) at (-5.4,-1.6) {};
    \node[vertex] (y10) at (-4.9,-2.3) {};

    \draw[thick] (v1) -- (v2) -- (v3) -- (v4) -- (v5) -- (v1)--(v6);
     \draw[thick] (v1) -- (v7) -- (v8) -- (v9);
     \draw[thick] (v8) -- (v10);
      \draw[thick] (v4) -- (x7) -- (x8) -- (x9);
    \draw[thick] (x8) -- (x10);
    \draw[thick] (v4) -- (x6);
    \draw[thick] (v3) -- (y7) -- (y8) -- (y9);
    \draw[thick] (y8) -- (y10);
    \draw[thick] (v3) -- (y6);
     
    \node[above left] at (v1) {$2$};
    \node[left] at (v2) {$1$};
    \node[below left] at (v3) {$3$};
    \node[below right] at (v4) {$4$};
    \node[right] at (v5) {$1$};
      \node[above] at (v7) {$1$};
    \node[above] at (v8) {$3$};
    \node[right] at (v9) {$1$};
    \node[above] at (v10) {$1$};
     \node[left] at (v6) {$1$};
       \node[right] at (x6) {$1$};
       \node[above] at (x7) {$1$};
     \node[above] at (x9) {$1$};
      \node[above] at (x8) {$2$};
        \node[right] at (x10) {$1$};
         \node[left] at (y6) {$1$};
       \node[above] at (y7) {$1$};
     \node[above] at (y9) {$1$};
      \node[above] at (y8) {$2$};
        \node[left] at (y10) {$1$};
\end{tikzpicture}
\caption{A graph $G$ with $\chi_\rho(G)=4$ and $\dchir(G)=2$.}
\label{fig:exampleC}
\end{center}
\end{figure}

We remark that $\dchir(G)$ can be much bigger than $3$ and can, in fact, be arbitrarily large. Indeed, by Proposition~\ref{prp:diam2}, $\dom_{\chi_\rho}(G)=\beta(G)+1$ for any graph $G$ with ${\rm diam}(G)=2$.

Since the decision version of the packing chromatic number is NP-complete even for graphs of diameter $2$ and minimum degree at least $2$, we infer the following result from Theorem~\ref{thm:domlower} combined with Observations~\ref{ob:domandchi} and~\ref{obs:domatic}.

\begin{corollary}
\label{cor:completexity3}
The decision version of $\chi_{\rho,3}$ is NP-complete even when restricted to diameter-$2$ graphs with minimum degree at least $2$.  
\end{corollary}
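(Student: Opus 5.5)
We need to show NP-completeness of deciding $\chi_{\rho,3}(G)\le t$ for diameter-2 graphs with $\delta\ge 2$. The plan is a direct reduction from the packing chromatic number restricted to the same class, which the paper cites as NP-complete.

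\textbf{Membership in NP.} A certificate is a coloring $f\colon V(G)\to[t]$ together with a partition $\{D_1,D_2,D_3\}$ of $V(G)$. Checking the packing condition takes one all-pairs shortest path computation and then $O(n^2)$ distance comparisons. Checking that each $f|_{D_i}$ is a dominating broadcast needs, for every vertex $x$ and every $i\in[3]$, a witness $w\in D_i$ with $d_G(x,w)\le f(w)$. All of this is polynomial, and since $t\le n(G)$ may be assumed, the certificate has polynomial size.

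\textbf{Hardness.} The key identity is that $\chi_{\rho,3}(G)=\chi_\rho(G)$ for every connected graph $G$ with $\delta(G)\ge 2$. This follows from Theorem~\ref{thm:domlower}, which gives $\dchir(G)\ge 3$, combined with Observation~\ref{ob:domandchi} for $k=3$. Diameter-2 graphs are connected, so the identity holds throughout the restricted class. The reduction is therefore the identity map $(G,t)\mapsto(G,t)$: the instance ``$\chi_\rho(G)\le t$'' is a yes-instance exactly when ``$\chi_{\rho,3}(G)\le t$'' is. The reduction is trivially polynomial and preserves both the diameter-2 condition and the minimum-degree condition.

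\textbf{Main obstacle.} The argument depends entirely on the cited NP-completeness of the packing chromatic number for diameter-2 graphs with $\delta\ge 2$, so I would first check that this statement is actually available. The standard reduction (Goddard et al.) shows that for diameter-2 graphs $\chi_\rho(G)=n(G)-\alpha(G)+1$. This matches Proposition~\ref{prp:diam2}. Hence the decision problem is equivalent to deciding $\alpha(G)\ge n(G)-t+1$ on diameter-2 graphs.

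If the minimum-degree condition is not explicitly covered by the literature, I would close the gap by reducing Independent Set. Given an arbitrary graph $H$, add a new vertex adjacent to every vertex of $H$; this makes the diameter at most 2. To keep $\delta\ge 2$, add two such universal vertices, adjacent to each other. Then every original vertex gains degree 2 and the new vertices have large degree. For $\alpha(H)\ge 2$, the new graph has independence number exactly $\alpha(H)$, so independence number is preserved and NP-hardness transfers.

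Combining this with the identity $\chi_{\rho,3}=\chi_\rho$ on the class completes the proof.
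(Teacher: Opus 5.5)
Your proposal is correct and follows the paper's argument. On connected graphs with $\delta\ge 2$, Theorem~\ref{thm:domlower} together with Observation~\ref{ob:domandchi} gives $\chi_{\rho,3}=\chi_\rho$, so the identity map transfers hardness of $\chi_\rho$ on diameter-$2$ graphs with minimum degree at least $2$ to $\chi_{\rho,3}$. Your explicit NP-membership check and your reduction from Independent Set via two adjacent universal vertices (using $\chi_\rho(G)=n(G)-\alpha(G)+1$ for diameter-$2$ graphs) are sound, and they fill in details the paper only asserts.
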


\section{Concluding remarks}
\label{sec:conclude}

Results in Section~\ref{sec:paths} concerning finite paths show that $\chi_{\rho,k}(P_n)=k$ as soon as $n\ge 8$ (in fact, for $k\in\{3,4\}$ the equality holds even for some smaller $n$). On the other hand, by Theorem~\ref{theorem_chi_rho_k_12}, $\chi_{\rho,k}(P_{n})>k$ for any $k \geq 12$ and any $n \geq k$. Having in mind this discrepancy, the natural question is where this ``jump'' happens, and we formulate this problem as follows. 
\begin{problem}
\label{p:jump}
 What is the minimum integer $k$ such that $\chi_{\rho,k}(P_{n})>k$ for all $n\ge k$?
\end{problem}
\noindent Our preliminary study also showed that for $k=6$ and any $n \geq 10$, $\chi_{\rho,6}(P_n)=6$, hence the solution to Problem~\ref{p:jump} lies between $7$ and $12$.

Note that $\chi_{\rho,k}(P_n)\le k+1$ for all $k\in \{3,4,5\}$ and $n\ge k$. We have also checked that $\chi_{\rho,6}(P_6)=8$. However, if $n$ is large enough with respect to $k$, we think that $k+1$ could be a general upper bound for $\chi_{\rho,k}(P_n)$. 

\begin{problem}
Is it true that $\chi_{\rho,k}(P_n)\le k+1$ for any $k\ge 3$ and any $n\ge 2k$?
\end{problem}

\bigskip

Concerning the upper bound on $\chi_{\rho, k}(P_\infty)$ we suspect the following question has an affirmative answer. 

\begin{problem}
    Is it true that $\chi_{\rho, k}(P_\infty) \le k-1$ for any integer $k\ge 4$?
\end{problem}
\noindent Note that the above problem has the positive answer when $4\le k\le 6$. 
 
 Based on examples of graphs with $\dchir(G)=2$ presented in Section~\ref{sec:domatic}, it would be interesting to determine if there exists a family of graphs $G$ with $\dchir(G)=2$ and $\chi_\rho(G)$ arbitrarily large. In addition, in the light of Theorem~\ref{thm:domlower} it would be interesting to characterize the class of graphs with $\delta(G)\ge 2$ and $\dom_{\chi_\rho}(G)=3$. Combining the above, we pose the following problem.
 \begin{problem}
For $k\in\{2,3\}$, characterize the graphs $G$ with $\dchir(G)=k$. 
\end{problem}

Recall that due to Observation~\ref{obs:domatic} and Theorem~\ref{thm:domlower}, respectively, we have
$$\min\{\dchir(G):\,\delta(G)\ge 1\}=2, \textrm{ and} \qquad \min\{\dchir(G):\,\delta(G)\ge 2\}=3.$$
It would be interesting to study the corresponding behavior of $\dchir(G)$ in graphs $G$ with fixed minimum degree $k$ for $k\ge 3$. 
\begin{problem}
Determine $\min\{\dchir(G):\,\delta(G)\ge k \}$ for $k\ge 3$. 
\end{problem}
 
The main focus of this paper was on the packing domatic numbers of graphs, while the $S$-packing domatic numbers for sequences $S$ different from the sequence $(n)_{n\in \mathbb{N}}$ were considered only briefly in Section~\ref{sec:prel}. Hence, this large topic remains open for future research.

\section*{Acknowledgements}
B.B. and J.F. acknowledge the financial support of the Slovenian Research and Innovation Agency (research core funding No.\ P1-0297 and projects N1-0285, N1-0431 and J1-70045).
W.H. was supported by the National Natural Science Foundation of China (No.\ 12371345) and CSC scholarship program (No.\ 202508420004). DeepSeek assisted us in writing the code that appears in Appendix A.  Chat GPT-6.1 Sol was used for presentational purposes and for verifying some of the proofs; the authors take full responsibility for the final version of the paper.

\newpage 

\section*{Appendix A: $\chi_{\rho,4}(C_{20}) \geq 4$}

We verified that $\chi_{\rho,4}(C_{20}) \geq 4$ by computer search over all possible 4-partitions of $C_{20}$. First, notice that the only possible size $4$-tuples are $(4,4,4,8)$, $(4,4,5,7)$, $(4,4,6,6)$, $(4,5,5,6)$, and  $(5,5,5,5)$. The program examines each of these five cases and verifies that none admits a valid partition yielding a packing $4$-domatic coloring. 
We list it below. 

As a matter of interest, we also searched for set size combinations with three sets, and obtained the following outcome:

  \begin{table}[htbp]
	\centering
	\caption{Number of possible unions \(A_1 \cup A_2 \cup A_3\) for each size combination, where \(A_1, A_2, A_3\) are pairwise disjoint dominating broadcasts.}
	\label{tab:k3_unions}
	\begin{tabular}{cc}
		\toprule
		\((|A_1|, |A_2|, |A_3|)\) & Number of possible unions \\
		\midrule
		\((4, 4, 4)\) & 20 \\
		\((4, 4, 5)\) & 165 \\
		\((4, 4, 6)\) & 470 \\
		\((4, 5, 5)\) & 535 \\
		\((5, 5, 5)\) & 902 \\
		\bottomrule
	\end{tabular}
\end{table}

\bigskip 

\definecolor{lightgray}{HTML}{F5F5F5}
\definecolor{mystring}{RGB}{128,0,128} 

\lstset{
	language=Python,
	numbers=left,
	numberstyle=\tiny\color{gray},
	stepnumber=1,
	numbersep=5pt,
	basicstyle=\ttfamily\small,
	columns=fixed,
	keepspaces=true,
	tabsize=4,
	showspaces=false,
	showstringspaces=false,
	breaklines=true,
	frame=none,
    	backgroundcolor=\color{lightgray!20},
	keywordstyle=\color{blue},
	commentstyle=\color{green!60!black},
	stringstyle=\color{mystring},
	literate=
	{_cap_}{{\color{mystring}$\cap$}}1
	{_cup_}{{\color{mystring}$\cup$}}1
	{_rho_}{{\color{mystring}$\rho$}}1
	{_chi_}{{\color{mystring}$\chi$}}1
	{_emptyset_}{{\color{mystring}$\emptyset$}}1
	{_ge_}{{\color{mystring}$\ge$}}1,
}

\begin{lstlisting}
from itertools import combinations

n = 20
coloring = [3, 1, 2, 1] * 5  


def dist(i, j):
    return min(abs(i - j), n - abs(i - j))


def is_dom_broadcast(subset):
    if not subset:
        return False
    for v in range(n):
        if not any(dist(v, u) <= coloring[u] for u in subset):
            return False
    return True


# 2. Precompute all dominating broadcast subsets (sizes 4-8)
print("=" * 70)
print("Precomputing all dominating broadcast subsets")
print("=" * 70)

dom = {}
for size in [4, 5, 6, 7, 8]:
    dom[size] = []
    for subset in combinations(range(n), size):
        if is_dom_broadcast(list(subset)):
            dom[size].append(set(subset))
    print(f"  Size {size}: found {len(dom[size])}")

print()

# 3.List all unions of A1, A2 and A3 for k=3
print("=" * 70)
print("List all unions of these disjoint dominating broadcast sets A1, A2, A3")
print("Condition: - (A1_cap_A2) _cup_ (A1_cap_A3) _cup_ (A2_cap_A3) = _emptyset_")
print("=" * 70)
print()

size_combos_3 = [(4, 4, 4), (4, 4, 5), (4, 4, 6), (4, 5, 5), (5, 5, 5)]

for sizes in size_combos_3              
    print(f"Size combination {sizes}:")
    lists = [dom[s] for s in sizes]

    if any(len(lst) == 0 for lst in lists):
        print("  WARNING: Some size has no dominating subsets, skipping")
        print()
        continue

    union_set = set()
    count = 0
    max_display = 5

    for a1 in lists[0]:
        for a2 in lists[1]:
            if a1 & a2:
                continue
            for a3 in lists[2]:
                if (a1 & a3) or (a2 & a3):
                    continue
                union = frozenset(a1 | a2 | a3)
                if union not in union_set:
                    union_set.add(union)
                    count += 1
                    if count <= max_display:
                        print(f"  Union {count}: A1={sorted(a1)}, A2={sorted(a2)}, A3={sorted(a3)}")
                        print(f"           A1 _cup_ A2 _cup_ A3 = {sorted(union)}")
                        print(f"           |union| = {len(union)}")
                        print()

    print(f"  Total disjoint unions for this size combo: {count}")
    if count > max_display:
        print(f"  (Showing only first {max_display})")
    print()

# 4. k=4: Search for a 4-partition
print("=" * 70)
print("k=4: Search for 4-partition (A1 _cup_ A2 _cup_ A3 _cup_ A4 = V(C20))")
print("Conditions: Each Ai is a dominating broadcast, pairwise disjoint")
print("=" * 70)
print()

size_combos_4 = [(4, 4, 4, 8), (4, 4, 5, 7), (4, 4, 6, 6), (4, 5, 5, 6), (5, 5, 5, 5),
]

found4 = False

for sizes in size_combos_4:
    print(f"Trying size combination {sizes}:")
    lists = [dom[s] for s in sizes]

    if any(len(lst) == 0 for lst in lists):
        print("  WARNING: Some size has no dominating subsets, skipping")
        print()
        continue


    def backtrack(level, selected):
        if level == 4:
            union = set()
            for s in selected:
                union.update(s)
            return len(union) == n

        for subset in lists[level]:
            disjoint = True
            for s in selected:
                if subset & s:
                    disjoint = False
                    break
            if not disjoint:
                continue

            if backtrack(level + 1, selected + [subset]):
                return True
        return False


    if backtrack(0, []):
        found4 = True
        print(f"  FOUND partition!")
        break
    else:
        print("  NOT found")
        print()

# 5. Final conclusion
print("=" * 70)
print("Final Conclusion")
print("=" * 70)

if found4:
    print("WARNING: This 3-coloring CAN be partitioned into 4 dominating broadcast parts")
else:
    print("NOT found: 4-partition does not exist")
    print("Under all 5 size combinations, no 4 mutually disjoint")
    print("dominating broadcast subsets cover all vertices.")
    print()
    print("Therefore, this 3-coloring CANNOT be partitioned")
    print("into 4 dominating broadcast parts.")
    print()
    print("CONCLUSION:  The packing 4-domatic chromatic number of C20 is at least 4.")
print("=" * 70)
\end{lstlisting}

\end{document}